\newcommand{\ignore}[1]{}
\newtheorem{asu}{{\sc Assumption}}
\newtheorem{corollary}{{\sc Corollary}}
\newtheorem{thm}{\sc Theorem}
\newtheorem{rem}{\sc Remark}
\newtheorem{pro}{\sc Proposition}
\newtheorem{lem}{\sc Lemma}
\newtheorem{ex}{\sc Example}
\newproof{pf}{Proof}
\newproof{pot1}{Proof of Theorem \ref{thm1}:}
\newproof{pot2}{Proof of Theorem \ref{thm2}}
\newproof{pot3}{Proof of Theorem \ref{thm3}}
\journal{}
\renewenvironment{pot1}[1][\sc Proof of Theorem \ref{thm1}]{
	\par\pushQED{\qed}%
	\normalfont \topsep6\p@\@plus6\p@\relax
	\trivlist
	\item[\hskip\labelsep
	#1\@addpunct{:}]\ignorespaces
}{%
	\popQED\endtrivlist\@endpefalse
}
\renewenvironment{pot2}[1][\sc Proof of Theorem \ref{thm2}]{
	\par\pushQED{\qed}%
	\normalfont \topsep6\p@\@plus6\p@\relax
	\trivlist
	\item[\hskip\labelsep
	#1\@addpunct{:}]\ignorespaces
}{%
	\popQED\endtrivlist\@endpefalse
}
\renewenvironment{pot3}[1][\sc Proof of Theorem \ref{thm3}]{
	\par\pushQED{\qed}%
	\normalfont \topsep6\p@\@plus6\p@\relax
	\trivlist
	\item[\hskip\labelsep
	#1\@addpunct{:}]\ignorespaces
}{%
	\popQED\endtrivlist\@endpefalse
}
\begin{document}
	\begin{frontmatter}
		\title{On the Realized Joint Laplace Transform of Volatilities with Application to Test the Volatility Dependence}
		%\ignore{
			\author{\sc{Xinwei Feng}}
			%\ead{liuzhi@umac.mo}
			\address{Zhongtai Securities Institute for Financial Studies, Shandong University}
			\author{\sc{Yu Jiang}\corref{cor1}}
			%\ead{liuzhi@umac.mo}
			\address{Department of Mathematics, University of Macau}
			%\cortext[cor1]{The author would like to thank the financial support from FDCT of Macau (No. ??). Email: liuzhi@umac.mo.}
			\author{\sc{Zhi Liu}}
			%\ead{liuzhi@umac.mo}
			\address{Department of Mathematics, University of Macau}
			\author{\sc{Zhe Meng}}
			%\ead{liuzhi@umac.mo}
			\address{Zhongtai Securities Institute for Financial Studies, Shandong University}
			\cortext[cor1]{Corresponding author.Email: yc27959@connect.um.edu.mo. Xinwei FENG acknowledges the financial support from the National Natural Science Foundation of China (12371148, 12001317), the Shandong Provincial Natural Science Foundation (ZR2020QA019), and the QILU Young Scholars Program of Shandong University.}
			%}
\begin{abstract}
In this paper, we first investigate the estimation of the empirical joint Laplace transform of volatilities of two semi-martingales within a fixed time interval $[0, T]$ by using overlapped increments of high-frequency data. The proposed estimator is robust to the presence of finite variation jumps in price processes. The related functional central limit theorem for the proposed estimator has been established. Compared with the estimator with non-overlapped increments, the estimator with overlapped increments improves the asymptotic estimation efficiency. Moreover, we study the asymptotic theory of estimator under a long-span setting and employ it to create a feasible test for the dependence between volatilities. Finally, simulation and empirical studies demonstrate the performance of proposed estimators.\\~\\
			\textit{MSC:} 60G15, 60F05
		\end{abstract}
		\begin{keyword}
 It$\hat{\text o}$ semi-martingale; High-frequency data;   Realized Joint Laplace transform of volatility; Stable convergence; Volatility dependence.
		\end{keyword}
	\end{frontmatter}
	
	%\begin{spacing}{1.4}
	\section{Introduction}
In recent years, blooming global commerce and modern digital technologies have produced substantial volumes of data. Nevertheless, analyzing large-scale, high-dimensional datasets remains challenging. With the wide availability of high-frequency data, research on financial econometrics has been rapidly developed. Among others, volatility estimation is one of the most popular topics due to its widespread use in option pricing, risk management, and high-frequency trading, see \cite{barndorff2002econometric}, \cite{jacod2008asymptotic} and \cite{andersen2010continuous}. Statistical inference for the volatility is inherently complex since most of the studies assume that the asset prices follow It$\hat{\text {o}}$ semi-martingale and then the underlying volatility process is latent, see \cite{merton1973theory}, \cite{heston1993closed}, \cite{bates1996jumps}, \cite{andersen2001distribution}, \cite{tijms2003first} and the references therein. Fortunately, The recent accessibility of high-frequency data has made it feasible to conduct an analysis of the volatility of the price process within a relatively short period. Under the assumption of continuous time models, there are usually two ways to define volatility. The first is called spot volatility, which refers to the value of the coefficient that describes the diffusion process of price return fluctuating at a specific point in time. If the daily fluctuation of asset prices is considered, Spot volatility does not play a key role. This leads to another type of volatility, Integrated volatility. Integrated volatility is the integration of Spot volatility within a period (such as a day). Spot volatility and Integrated volatility have been widely studied, see, e.g., \cite{andersen2001distribution}, \cite{barndorff2002econometric}, \cite{fan2008spot}, \cite{jacod2008asymptotic}, \cite{jacod2009microstructure},
\cite{Mancini2009non}.

Therefore, how to use high-frequency data to obtain more information about volatility has become a concern for statisticians and financial economists. \cite{todorov2012realized} first proposed the realized Laplace transform (RLT) of volatility, which is a consistent estimator of the empirical Laplace transform (ELT). The empirical Laplace transform of volatility contains more information than the integrated volatility. For instance, the empirical Laplace transform of volatility is a mapping from the data to a random function, while the integrated volatility is only a mapping from the data to a random variable. Hence, the integrated volatility can be obtained from the empirical Laplace transform of volatility. Moreover, the empirical Laplace transform of volatility preserves information about the characteristics of volatility under some mild stationarity conditions. This article has enabled scholars to study volatility beyond the statistical estimation level and has opened up the exploration of the distribution properties of volatility.
Shortly thereafter, \cite{li2013volatility} introduced a novel measure known as volatility occupation time to investigate the distribution of volatility. There are still many researchers who have made many outstanding contributions in these two directions. For example, \cite{todorov2011realized},\cite{todorov2012realizedb} further improved the theory of realized Laplace transform of volatility under the pure-jump model setting. \cite{wang2019rate}, \cite{wang2019realized} developed the limiting behavior of realized Laplace transform of volatility with the presence of microstructure noise. The bootstrap inference of the realized Laplace transform of volatility has been studied by \cite{hounyo2023bootstrapping}, and the large deviation principle of the realized Laplace transform of volatility has been derived by \cite{feng2022large}. On the other hand, \cite{li2016estimating}
estimated the volatility occupation time employing the technology of Laplace inversion transform. \cite{christensen2019realized} exploit the results of realized empirical distribution of volatility and construct new goodness-of-fit tests. Related research on Occupation density can be found in \cite{zhang2022occupation} and so forth. These researches are sufficient to show that the study of the distribution of volatility and its properties are of great significance to the field of statistics and finance.
%In this paper, we are interested in another volatility measure, the realized Laplace transform (RLT) of volatility, which is a consistent estimator of the empirical Laplace transform (ELT). This empirical Laplace transform of volatility was first proposed and studied by \cite{todorov2011realized} and further developed by \cite{todorov2012realized}, \cite{todorov2012realizedb}, \cite{wang2019rate} and \cite{wang2019realized}. The empirical Laplace transform of volatility contains more information than the integrated volatility. For example, the empirical Laplace transform of volatility is a mapping from the data to a random function, while the integrated volatility is only a mapping from the data to a random variable. Hence, the empirical volatility can be obtained from the empirical Laplace transform of volatility. Moreover, the empirical Laplace transform of volatility preserves information about the characteristics of volatility under some mild stationarity conditions. The empirical Laplace transform of volatility can be used to study the distributional properties of the volatility. For example, \cite{li2016estimating} and \cite{li2013volatility} studied the realized Laplace inversion of volatility. The bootstrap inference of the realized Laplace transform of volatility has been studied by \cite{hounyo2023bootstrapping}, and the large deviation principle of the realized Laplace transform of volatility has been derived by \cite{feng2022large}.

The aforementioned works, however, only consider the univariate case. In the context of financial econometrics, the dependent structure of price and volatility among assets are also important in risk management, portfolio allocation, etc. For the asset price, \cite{andersen2001distribution} first proposed the estimator of covariance of multivariate price processes. With the prosperous development of financial industries in recent years, many financial innovations involve complex derivations and some structured financial products, such as Collateralized Debt Obligation (CDOs). The past several years have witnessed an increasing number of research focusing on the correlation structure of two or more assets, for example, \cite{andersen2003modeling}, \cite{hayashi2005covariance}, \cite{ait2010high}, \cite{dalalyan2011second}, \cite{hayashi2011nonsynchronous}, and \cite{kong2020asymptotics}, among others. \cite{ding2025multiplicative} propose a multiplicative volatility factor model (MVF) to capture the co-movements of volatilities simultaneously, which greatly enhances the volatility forecasting accuracy in empirical study. Consequently, it is very helpful for understanding the joint distribution of multivariate volatilities and the relationships among the volatilities of different assets to manage and control financial risk. For instance, we aim to determine whether the distribution of volatility of two assets in different sectors or the same sector across different time periods exhibits similarity or equivalence. Additionally, we seek to ascertain whether the volatility of different assets is independent and so on. These problems need to be urgently solved both in the theory of statistical inference and in the practice of quantitative finance. However, they are rarely involved in current research. Therefore, our work will be a strong supplement and in-depth exploration of these issues.

%{\textcolor{blue}{
In financial modeling, the volatility is assumed to be stochastic. Therefore, the dependence structure among the volatilities of different assets is also interesting. To our knowledge, no work in literature considers the dependence of volatilities. We attempt to employ the empirical joint Laplace transform of two volatilities to study their dependence. Precisely,
%For multivariate volatility processes, volatility itself is not an isolated phenomenon, it is influenced by various factors and exhibits a complex interdependence with each other. One crucial aspect of studying volatilities is analyzing its dependence, which refers to the relationship between the volatilities of different assets, markets, or time periods.
%The realized Laplace transform of the multivariate volatility process (joint Laplace transform of volatility) is also important in understanding the relationship among the volatilities. we can use the proposed estimator to test whether there is a dependence between the two volatilities, which is a fundamental concept that affects investment decisions, risk management strategies, and overall market stability.
%Therefore, with the realized joint Laplace transform of volatility, we could expect to gain valuable insights into the joint distribution and correlation structure among volatilities of assets, making it an indispensable tool for studying complex systems where uncertainties are interrelated. It is clearly crucial in understanding and managing risk.
%}}	
we define the empirical joint  Laplace transform of volatilities as
\begin{eqnarray}
\int_0^T\mbox{e}^{- \langle(u, v), ((\sigma_{s}^{X})^2, (\sigma_{s}^{Y})^2)\rangle}\mathrm{d}s,
\end{eqnarray}
where, $(\sigma^{X})^2$ and $(\sigma^{Y})^2$ are the two volatility processes of two assets $X$ and $Y$, respectively. Recall that the joint Laplace transform contains full information on the distribution of a bivariate random vector. Hence, it can be used to detect the dependence of two random variables. For instance, if we consider a long enough time interval $[0, T]$, then under some appropriate stationarity and mixing conditions, as stated in \cite{DD2005}, the empirical joint  Laplace transform $\frac{1}{T}\int_0^T\mbox{e}^{- \langle(u, v), ((\sigma_{s}^{X})^2, (\sigma_{s}^{Y})^2)\rangle}\mathrm{d}s$ will be close to the joint Laplace transform of $(\sigma_{t}^X)^2$ and $(\sigma_{t}^Y)^2$, namely, $E[e^{-u(\sigma_{t}^X)^2-v(\sigma_{t}^Y)^2}]$. It thus can be used to either recover the joint distribution of two volatilities, study their independence, or investigate any other quantities related to the joint distribution.

Since the empirical joint  Laplace transform $\int_0^T\mbox{e}^{- \langle(u, v), ((\sigma_{s}^{X})^2, (\sigma_{s}^{Y})^2)\rangle}\mathrm{d}s$ is unobservable, hence in this paper we will firstly construct a consistent estimator for fixed time interval $[0, T]$ by using high-frequency data. Secondly, we will derive the asymptotic behavior under a long time span.
%Assuming that $\{(X_{i\Delta_n}, Y_{i\Delta_n}), i=0,\cdots, n\}$ are the observations in the interval $[0, T]$ with $n\Delta_n=T$ and $\Delta_i^nZ=Z_{i\Delta_n}-Z_{(i-1)\Delta_n}$ for $Z=X, Y$, then conditional the volatility processes, we have
%\begin{eqnarray}
%\mathbb{E}\left[\cos\left(\frac{\sqrt{2u}\Delta_i^n{X}+\sqrt{2v}\Delta_{i+1}^n{Y}}{\sqrt{\Delta_n}}\right)\right] \approx \exp\{-u(\sigma_{(i-1)\Delta_n}^{X})^2-v(\sigma_{(i-1)\Delta_n}^{Y})^2\}.
%\end{eqnarray}
%Based on the approximation, we aggregate to obtain a consistent estimator of the joint Laplace transform of volatilities. Finally, considering $i=2k, k=0,\cdots, n/2$ (non-overlapped) and $i=0, 1,\cdots, n$ (overlapped), respectively, we achieve two different estimators. We will make some comparisons for the two estimators.

The rest of the paper is organized as follows. In Section \ref{setup}, we introduce the model assumptions and present the estimators. Section \ref{asymptotic} derives the consistency and central limit theorem of the proposed estimators for fixed time interval $[0, T]$. The asymptotic behavior of the estimator under a long time period is presented in Section \ref{longspan}. In Section \ref{simulation}, \ref{empirical}, we exhibit the simulation results and empirical study, respectively. We conclude the paper in Section \ref{conclusion}, and all technical proofs are put into the Appendix.

\section{Setup}\label{setup}
We consider a two-dimensional process $\{(X_{t}, Y_{t}), 0\leq t\leq T\}$, which is defined on an appropriate filtered probability space $\left(\Omega,\mathcal{F},(\mathcal{F}_{t})_{t\geq 0},\mathbb{P}\right)$ with the following form:
	\begin{equation*}\label{model}
		\left\{
		\begin{aligned}
			\mathrm{d}X_{t}&=b_{t}^{X}\mathrm{d}t+\sigma_{t}^{X}\mathrm{d}W_{t}^{X}+\int_{\mathbb R}\delta^X(t-, x)\tilde{\mu}^{X}(\mathrm{d}t, \mathrm{d}x), ~~X_0=x,\\
			\mathrm{d}Y_{t}&=b_{t}^{Y}\mathrm{d}t+\sigma_{t}^{Y}\mathrm{d}W_{t}^{Y}+\int_{\mathbb R}\delta^Y(t-, y)\tilde{\mu}^{Y}(\mathrm{d}t, \mathrm{d}y), ~~Y_0=y,
		\end{aligned}
		\right.
	\end{equation*}
where $b_{t}^{Z}$ and $\sigma_{t}^{Z}$, $Z=X, Y$ are adapted and locally bounded $\mathrm{c\grave{a}dl\grave{a}g}$ processes, $(W_{t}^{X}, W_{t}^{Y})$ is a two-dimensional Gaussian process with independent increments, zero mean, and the covariance matrix is
	$$
	\begin{bmatrix}
		t & \int_{0}^{t}\rho_{s}\mathrm{d}s \\
		\int_{0}^{t}\rho_{s}\mathrm{d}s & t
	\end{bmatrix}
	, \ \ \ 0\leq t\leq T, $$
	where the $\rho_{t}, 0\leq t\leq T$ is a deterministic function and take values in the interval $[-1,1]$. Actually, the marginal processes  $W_{t}^{X}, W_{t}^{Y}$ are standard Brownian motions. There exists a standard Brownian motion $W_{t}^{\star}$ independent of $W_{t}^{X}$, and we can rewrite $$\mathrm{d}W_{t}^{Y}=\rho_{t}\mathrm{d}W_{t}^{X}+\sqrt{1-\rho_{t}^2}\mathrm{d}W_{t}^{\star}, 0\leq t\leq T.$$ Besides, $\tilde{\mu}^{X}$ and $\tilde{\mu}^{Y}$ are homogeneous poisson random measures with compensators $\nu^{X}(\mathrm{d}x)\otimes\mathrm{d}t$ and $\nu^{Y}(\mathrm{d}y)\otimes\mathrm{d}t$ respectively, and $\delta^{X}(t-, x)$ and $\delta^{Y}(t-, y)$ are prediction processes on $\mathbb{R}^{+}\times\mathbb{R}$ satisfying integrability conditions.
	
Due to technical reasons, we need two assumptions, which are similar to the work of \cite{todorov2012realized}. The Assumption \ref{asu1} is used to restrict the discontinuous part (jumps), and the Assumption \ref{asu2} is about the coefficient of the continuous part of price processes.
	\begin{asu}\label{asu1}
		The $L\acute{e}vy$ measure of $\tilde{\mu}^Z$, $Z=X,Y$ satisfy\\
		$$\mathbb{E}\left(\int_{0}^{t}\int_{\mathbb{R}}\left(|\delta^{Z}(s,z)|^{p}\vee|\delta^{Z}(s,z)|\right)\nu^{Z}(\mathrm{d}z)\mathrm{d}s\right)<\infty$$ for every $t>0$ and every $p\in(\beta,1)$, where $0\leq\beta<1$ is some constant.
	\end{asu}
	
The Assumption \ref{asu1} requires the jump components of price processes to be of finite variation, and the first moment of the jump processes exists. This condition is needed to show the asymptotic normality.	
	
	\begin{asu}\label{asu2}
		Assume that $\sigma_{t}^Z$, $Z=X,Y$, is an It$\hat{\text {o}}$ semi-martingale given by
		\begin{equation*}
			\sigma_{t}^{Z}=\sigma_{0}^{Z}+\int_{0}^{t}\tilde{b}_{s}^{Z}\mathrm{d}s+\int_{0}^{t}v_{s}^{Z}\mathrm{d}W_{s}^{Z}+\int_{0}^{t}v_{s}^{'Z}\mathrm{d}W_{s}^{'Z}+\int_{0}^{t}\int_{\mathbb{R}}\delta^{'Z}(s-,z)\tilde{\mu}^{'Z}(\mathrm{d}s,\mathrm{d}z),\\
		\end{equation*}
		where $W_{s}^{'Z}$ is a Brownian motion that is independent $W_{s}^{Z}$, $\tilde{\mu}^{'Z}$ is a homogenous Possion measure with $L\acute{e}vy$ measure $\nu^{'Z}(\mathrm{d}z)\otimes\mathrm{d}t$, which has arbitrary dependence with $\tilde{\mu}^{Z}$, and $\delta^{'Z}(t,z):\mathbb{R}^{+}\times \mathbb{R}\rightarrow\mathbb{R}$ is $\mathrm{c\grave{a}dl\grave{a}g}$ in $t$. We have for every $t$ and $s$ and some $\iota>0$,
		\begin{align*}
			% \nonumber % Remove numbering (before each equation)
			&\mathbb{E}\left(|b_{t}^{Z}|^{3+\iota}+|\tilde{b}_{t}^{Z}|^{2}+|\sigma_{t}^{Z}|^{2}+|v_{t}^{Z}|^{3+\iota}+|v_{t}^{'Z}|^{3+\iota}+\int_{\mathbb{R}}|\delta^{'Z}(t,z)|^{3+\iota}\nu^{'Z}(\mathrm{d}z)\right)<C,\\
			&\mathbb{E}\left(|b_{t}^{Z}-b_{s}^{Z}|^{2}+|v_{t}^{Z}-v_{s}^{Z}|^{2}+|v_{t}^{'Z}-v_{s}^{'Z}|^{2}+\int_{\mathbb{R}}(\delta^{'Z}(t,z)-\delta^{'Z}(s,z))^{2}\nu^{'Z}(\mathrm{d}z)\right)<C|t-s|,
		\end{align*}
		where $C>0$ is some constant that does not depend on $t$ and $s$.
	\end{asu}
	
Assumption \ref{asu2} imposes integrability conditions on $b_{t}^{Z}$ and $\sigma_{t}^{Z}$, $Z=X, Y$ and bound their fluctuations over a short period of time. Moreover, we can also treat $b_{t}^{Z}$ and $\sigma_{t}^{Z}$, $Z=X, Y$ ``locally" as constants when the sampling frequency is high enough. This restriction is mild and widely used in the models of financial econometrics.

Next, we present our estimators. For a fixed time interval $[0,T]$, we observe $\{(X_{t}, Y_{t})\}$ at discrete time points $t_i^n=i\Delta_n$. When $\Delta_n\rightarrow 0$, we obtain the high frequency data $\{(X_{t_i^n}, Y_{t_i^n})\}$. The quantity of interest is the empirical joint  Laplace transform (integrated over the interval $[0, T]$) of $\sigma^{X}$ and $\sigma^{Y}$:
$$\int_0^T\mbox{e}^{-\langle (u, v), ((\sigma_{s}^{X})^2, (\sigma_{s}^{Y})^2)\rangle}\mathrm{d}s,$$
for $(u, v)\in \mathbb{R}_+^2$.
	Let $\Delta_i^nZ:=Z_{t_i^n}-Z_{t_{i-1}^n}$ for $Z=X, Y$,
	and $$\xi_i^n(u,v)=\cos\left(\frac{\sqrt{2u}\Delta_i^n{X}+\sqrt{2v}\Delta_{i+1}^n{Y}}{\sqrt{\Delta_n}}\right).$$
	We first propose an estimator with non-overlapped increments:
	\begin{equation*}
		V_n(u,v)=2\Delta_n\sum_{i=1}^{\lfloor n/2\rfloor}\xi_{2i-1}^n(u,v),
	\end{equation*}
	where $n=\lfloor T/\Delta_n \rfloor$. Our second estimator with overlapped increments is defined as:
	\begin{equation*}
		U_n(u,v)=\Delta_n\sum_{i=1}^{n-1}\xi_{i}^n(u,v).
	\end{equation*}
	
	\section{Asymptotic results}\label{asymptotic}
	In this section, we first show the consistency of these two estimators. Let $\sigma_i^Z:=\sigma_{t_i^n-}^Z$, $\Delta_i^n{W}^{Z}:=W_{t_i^n}^Z-W_{t_{i-1}^n}^Z$, $Z=X,Y$, ${\cal F}_i={\cal F}_{t_{i}^n}$. For the estimator $V_n(u,v)$, we have the following approximate
	$$\cos\left(\frac{\sqrt{2u}\Delta_{2i-1}^n{X}+\sqrt{2v}\Delta_{2i}^n{Y}}{\sqrt{\Delta_n}}\right) \approx \cos\left(\frac{\sqrt{2u}\sigma_{2i-2}^{X}\Delta_{2i-1}^n{W}^{X}+\sqrt{2v}\sigma_{2i-2}^{Y}\Delta_{2i}^n{W}^{Y}}{\sqrt{\Delta_n}}\right),$$
	where $i=1,\cdots,\lfloor n/2\rfloor$. In the Appendix, we will show that the error between the two terms is asymptotically negligible. Moreover,
	\begin{align*}
		% \nonumber % Remove numbering (before each equation)
		&\mathbb{E}\left[\cos\left(\frac{\sqrt{2u}\sigma_{2i-2}^{X}\Delta_{2i-1}^n{W}^{X}+\sqrt{2v}\sigma_{2i-2}^{Y}\Delta_{2i}^n{W}^{Y}}{\sqrt{\Delta_n}}\right)\Big|\mathcal{F}_{2i-2}\right]\\
		=&\mathbb{E}\left[\cos\left(\frac{\sqrt{2u}\sigma_{2i-2}^{X}\Delta_{2i-1}^n{W}^{X}}{\sqrt{\Delta_{n}}}\right)\cos\left(\frac{\sqrt{2v}\sigma_{2i-2}^{Y}\Delta_{2i}^n{W}^{Y}}{\sqrt{\Delta_{n}}}\right)\Big|\mathcal{F}_{2i-2}\right]\\
		=&e^{-\langle(u,v), ((\sigma_{2i-2}^{X})^2,(\sigma_{2i-2}^{Y})^2)\rangle}.
	\end{align*}
	The equalities hold because $\mathrm{sin}(\cdot)$ is an odd function and $\Delta_{2i-1}^n{W}^{X}$ is independent of $\Delta_{2i}^n{W}^{Y}$.
	According to the weak law of large numbers,
	\begin{equation}\label{consistency}
		2\Delta_{n}\sum_{i=1}^{\lfloor n/2\rfloor}\left\{\cos\left(\frac{\sqrt{2u}\sigma_{2i-2}^{X}\Delta_{2i-1}^n{W}^{X}+\sqrt{2v}\sigma_{2i-2}^{Y}\Delta_{2i}^n{W}^{Y}}{\sqrt{\Delta_n}}\right)-e^{-\langle(u,v), ((\sigma_{2i-2}^{X})^2,(\sigma_{2i-2}^{Y})^2)\rangle}\right\}\stackrel{P}{\longrightarrow}0.
	\end{equation}
	The term $2\Delta_{n}\sum_{i=1}^{\lfloor n/2\rfloor}e^{-\langle(u,v), ((\sigma_{2i-2}^{X})^2,(\sigma_{2i-2}^{Y})^2)\rangle}$ converges to $\int_0^T\mbox{e}^{-\langle (u, v), ((\sigma_{s}^{X})^2, (\sigma_{s}^{Y})^2)\rangle}\mathrm{d}s$. The consistency of $U_{n}$ can be obtained in the same way.
	
	Next, we will present the central limit theorems of the estimators $V_{n}$ and $U_{n}$. The notation $\stackrel{\cal S}{\longrightarrow}$ represents the stable convergence in law in the Theorems \ref{thm1} and \ref{thm2}. The stable convergence in law is stronger than the convergence in law and weaker than the convergence in probability. More details about this convergence mode can be found in \cite{renyi1963stable}, \cite{aldous1978mixing}, \cite{jacod2012discretization}
	and \cite{ait2014high}. We have the following functional central limit theorem.
	\begin{thm}\label{thm1} Under Assumptions \ref{asu1} and \ref{asu2} , we have
		
		\begin{equation}\label{equ3}
			\frac{1}{\sqrt{\Delta_n}}\left(V_n(u,v)-\int_0^T\mbox{e}^{-\langle (u, v), ((\sigma_{s}^{X})^2, (\sigma_{s}^{Y})^2)\rangle}\mathrm{d}s\right)\stackrel{\cal S}{\longrightarrow} \Phi_T(u,v),
		\end{equation}
		where the convergence is on the space ${\cal C}(\mathbb{R}_+^2)$ of continuous functions indexed by $(u, v)$ and equipped with the local uniform topology {\rm (}i.e., uniformly over compact sets of $(u, v)\in \mathbb{R}_+^2${\rm )}, and the process $\Phi_T(u,v)$ is defined on an extension of the original probability space and is an ${\cal F}$-conditionally Gaussian process with zero-mean function and covariance function $\int_0^T F(\sqrt{u}\sigma_{s}^{X}, \sqrt{v}\sigma_{s}^{Y}, \sqrt{u'}\sigma_{s}^{X}, \sqrt{v'}\sigma_{s}^{Y})\mathrm{d}s$ for every $(u, v, u', v')\in \mathbb{R}_+^4$ with
		\begin{equation*}
			F(x,y,\bar{x},\bar{y})=\mbox{e}^{-(x^2+y^2+\bar{x}^2+\bar{y}^2)}\cdot\left\{\mbox{e}^{-2(x\bar{x}+y\bar{y})}+\mbox{e}^{2(x\bar{x}+y\bar{y})}-2\right\}.
		\end{equation*}
		
		\begin{ignore}{
				A consistent estimator for the covariance function of $\Phi_T(u,v)$ is given by
				\begin{eqnarray}\label{estvariance}
					\hat{\Gamma}_n&=&\frac{\Delta_n}{2}\sum_{i=1}^{n-1}\xi_i^n(u+u'+\sqrt{uu'}, v+v'+\sqrt{vv'})\\
					&&+\frac{\Delta_n}{2}\sum_{i=1}^{n-1}\xi_i^n(u+u'-\sqrt{uu'}, v+v'-\sqrt{vv'})\\
					&&+\Delta_n\sum_{i=1}^{n-1}\cos(\frac{\sqrt{2u}\Delta_i^n{X_1}+\sqrt{2v'}\Delta_{i+1}^n{X_2}}{\sqrt{\Delta_n}})\cdot
					\Delta_n\sum_{i=1}^{n}\cos(\frac{\sqrt{2u'}\Delta_i^n{X_1}}{\sqrt{\Delta_n}}\big)\cos(\frac{\sqrt{2v}\Delta_i^n{X_2}}{\sqrt{\Delta_n}})\nonumber\\
					&&+\Delta_n\sum_{i=1}^{n-1}\cos(\frac{\sqrt{2u'}\Delta_i^n{X_1}+\sqrt{2v}\Delta_{i+1}^n{X_2}}{\sqrt{\Delta_n}})\cdot
					\Delta_n\sum_{i=1}^{n}\cos(\frac{\sqrt{2u}\Delta_i^n{X_1}}{\sqrt{\Delta_n}}\big)\cos(\frac{\sqrt{2v'}\Delta_i^n{X_2}}{\sqrt{\Delta_n}})\nonumber\\
					&&-3\Delta_n\sum_{i=1}^{n-1}\cos(\frac{\sqrt{2(u+u')}\Delta_i^n{X_1}+\sqrt{2(v+v')}\Delta_{i+1}^n{X_2}}{\sqrt{\Delta_n}}).
				\end{eqnarray}
				
				\begin{eqnarray}
					\tilde{\Gamma}_n&=&\Delta_n\sum_{i=1}^{\lfloor n/2\rfloor-1}\big((\xi_{2i-1}^n(u,v)-\xi_{2i+1}^n(u,v)\big)\big(\xi_{2i-1}^n(u',v')-\xi_{2i+1}^n(u',v')\big).
				\end{eqnarray}
			}
		\end{ignore}
		
		A consistent estimator for the covariance function of $\Phi_T(u,v)$ is given by
		\begin{align*}
			{\tilde{\Gamma}_n}=&{4\Delta_{n}\sum_{i=1}^{\lfloor n/2\rfloor}\mathrm{cos}\left(\frac{\sqrt{2u}\Delta_{2i-1}^{n}X+\sqrt{2v}\Delta_{2i}^{n}Y}{\sqrt{\Delta_{n}}}\right)\mathrm{cos}\left(\frac{\sqrt{2u'}\Delta_{2i-1}^{n}X+\sqrt{2v'}\Delta_{2i}^{n}Y}{\sqrt{\Delta_{n}}}\right)}\nonumber\\
			&{-4\Delta_{n}\sum_{i=1}^{\lfloor n/2\rfloor}\mathrm{cos}\left(\frac{\sqrt{2(u+u')}\Delta_{2i-1}^{n}X+\sqrt{2(v+v')}\Delta_{2i}^{n}Y}{\sqrt{\Delta_{n}}}\right)}.
		\end{align*}
	\end{thm}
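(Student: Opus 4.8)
The plan is to follow the standard martingale route for high‑frequency stable limit theorems, adapted to the bivariate non‑overlapping design. First I would invoke a localization argument to replace the locally bounded coefficients $b^Z,\sigma^Z,\tilde b^Z,v^Z,v^{\prime Z}$ and the jump coefficients of Assumptions \ref{asu1}--\ref{asu2} by genuinely bounded ones; since localization alters the processes only on a set of arbitrarily small probability and the target convergence is stable in law, this reduction is harmless. Under the strengthened (bounded) hypotheses the next step is the substitution announced before the theorem: replace each $\xi_{2i-1}^n(u,v)$ by its leading Brownian part $\cos\!\big(\Delta_n^{-1/2}(\sqrt{2u}\,\sigma_{2i-2}^X\Delta_{2i-1}^n W^X+\sqrt{2v}\,\sigma_{2i-2}^Y\Delta_{2i}^n W^Y)\big)$, and show the aggregate replacement error, after multiplication by $\Delta_n^{-1/2}$, is $o_p(1)$ locally uniformly in $(u,v)$.

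The core is a stable central limit theorem for the triangular array $\chi_i^n(u,v):=2\sqrt{\Delta_n}\,\zeta_i^n(u,v)$, where $\zeta_i^n$ is the Brownian cosine minus its $\mathcal F_{2i-2}$‑conditional mean $e^{-\langle(u,v),((\sigma_{2i-2}^X)^2,(\sigma_{2i-2}^Y)^2)\rangle}$. By the computation displayed just before the theorem each $\zeta_i^n$ is a martingale difference relative to $(\mathcal F_{2i})$, so $\sum_i\chi_i^n$ has vanishing conditional first moment. To apply Jacod's martingale stable CLT (cf. \cite{jacod2012discretization}) I would verify: (i) the conditional variance $\sum_i\mathbb E[\chi_i^n(u,v)\chi_i^n(u',v')\mid\mathcal F_{2i-2}]$ converges to $\int_0^T F(\sqrt u\sigma_s^X,\sqrt v\sigma_s^Y,\sqrt{u'}\sigma_s^X,\sqrt{v'}\sigma_s^Y)\,\mathrm ds$; (ii) a Lyapunov condition $\sum_i\mathbb E[|\chi_i^n|^4\mid\mathcal F_{2i-2}]\to0$; and (iii) the orthogonality $\sum_i\mathbb E[\chi_i^n(N_{t_{2i}^n}-N_{t_{2i-2}^n})\mid\mathcal F_{2i-2}]\to0$ for $N\in\{W^X,W^Y,W^{\prime X},W^{\prime Y}\}$ and for every bounded martingale orthogonal to them. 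Condition (ii) is immediate from $|\zeta_i^n|\le2$, which gives $\mathbb E[|\chi_i^n|^4]=O(\Delta_n^2)$ summed over $O(\Delta_n^{-1})$ indices; condition (iii) holds because the cosine is even while each increment of $N$ enters oddly, so the relevant conditional expectations vanish. For (i) the independence of $\Delta_{2i-1}^n W^X$ and $\Delta_{2i}^n W^Y$ given $\mathcal F_{2i-2}$ lets me factorize, and the identity $\cos A\cos B=\tfrac12[\cos(A-B)+\cos(A+B)]$ together with the Gaussian identity $\mathbb E[\cos(cG)]=e^{-c^2/2}$ for $G\sim N(0,1)$ produces exactly $\tfrac12 F$ per term; the prefactor $4\Delta_n$ against the grid spacing $2\Delta_n$ of $\{t_{2i-2}\}$ then turns the sum into the Riemann approximation of $\int_0^T F\,\mathrm ds$.

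For the functional statement I would upgrade finite‑dimensional stable convergence to convergence in $\mathcal C(\mathbb R_+^2)$ by establishing tightness through a Kolmogorov‑type moment bound of the form $\mathbb E|\Phi_n(\theta)-\Phi_n(\theta')|^{2q}\le C\,|\theta-\theta'|^{q}$ on compacts with $q$ large enough, using the smoothness of $(u,v)\mapsto\xi^n$ and the bounded moments from Assumption \ref{asu2}; this yields equicontinuity and, with the fidi limits, the $\mathcal F$‑conditionally Gaussian limit $\Phi_T$. Finally, the consistency of $\tilde\Gamma_n$ requires only a law‑of‑large‑numbers version of the same approximations: after the Brownian substitution, the $\mathcal F_{2i-2}$‑conditional expectation of the first sum is $\tfrac12 e^{-(x^2+y^2+\bar x^2+\bar y^2)}(e^{2(x\bar x+y\bar y)}+e^{-2(x\bar x+y\bar y)})$ and of the second sum is $e^{-(x^2+\bar x^2+y^2+\bar y^2)}$, with $x=\sqrt u\sigma_{2i-2}^X$, $y=\sqrt v\sigma_{2i-2}^Y$, $\bar x=\sqrt{u'}\sigma_{2i-2}^X$, $\bar y=\sqrt{v'}\sigma_{2i-2}^Y$, so that the factor $4\Delta_n$ against spacing $2\Delta_n$ gives $\tilde\Gamma_n\stackrel{P}{\longrightarrow}\int_0^T e^{-(x^2+y^2+\bar x^2+\bar y^2)}(e^{2(x\bar x+y\bar y)}+e^{-2(x\bar x+y\bar y)}-2)\,\mathrm ds=\int_0^T F\,\mathrm ds$.

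I expect the \emph{main obstacle} to be the negligibility step rather than the CLT machinery. A crude Lipschitz bound $|\cos(a+h)-\cos a|\le|h|$ on the drift/jump/$\sigma$‑discretization perturbation $h$ only produces an $O_p(1)$ contribution after the $\Delta_n^{-1/2}$ scaling, so it is insufficient. The remedy is a second‑order expansion $\cos(a+h)=\cos a-h\sin a-\tfrac12 h^2\cos(\tilde a)$: the linear term $-h\sin a$ has zero conditional mean (since $\sin$ is odd and the Brownian increments are conditionally symmetric), so its scaled sum is controlled by a conditional variance of order $O(\Delta_n)$, while the quadratic remainder is absorbed by the moment bounds of Assumption \ref{asu2}. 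The genuinely delicate part is the jump contribution, where only finite variation and a finite first moment are assumed ($\beta<1$ in Assumption \ref{asu1}); here I would split small and large jumps and use the $p$‑th moment control with $p\in(\beta,1)$ to show jumps do not contribute at the $\Delta_n^{-1/2}$ rate, which is exactly the role Assumption \ref{asu1} plays.
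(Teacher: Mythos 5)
Your proposal follows essentially the same route as the paper: localization, decomposition into a discretization error, a martingale-difference array, and a Riemann-sum error, negligibility of the error terms via a second-order expansion whose linear part has vanishing conditional mean (with the jump contribution controlled through the $p$-th moment bound, $p\in(\beta,1)$, of Assumption \ref{asu1}), the stable martingale CLT of Jacod--Shiryaev with the same four conditions and the same $\cos A\cos B=\tfrac12[\cos(A-B)+\cos(A+B)]$ computation of the conditional covariance, tightness via a Kolmogorov-type moment bound in $(u,v)$, and a law of large numbers for $\tilde\Gamma_n$. The only noteworthy deviation is your treatment of the orthogonality condition with $\delta_j W^X$, where you use the joint sign-symmetry of the conditionally centered Gaussian vector $(\Delta_{2j-1}^nW^X,\Delta_{2j}^nW^X,\Delta_{2j}^nW^Y)$ to kill the cross term $\mathbb{E}[\cos(b\,\Delta_{2j}^nW^Y)\,\Delta_{2j}^nW^X]$ directly; this is a valid and in fact shorter argument than the paper's It\^{o}-formula/ODE computation of the same quantity.
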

	
	%\begin{rmk}
The above estimator uses the non-overlapped increments, which yields a neat asymptotic covariance function. However, the estimation efficiency can be improved by using the overlapped increments:
		\begin{equation*}
			U_n(u,v)=\Delta_n\sum_{i=1}^{n-1}\xi_i^n(u,v).
		\end{equation*}
%	\end{rmk}
	We have the following results.
	\begin{thm}\label{thm2} Under Assumptions \ref{asu1} and \ref{asu2} , we have
		\begin{equation}\label{conve2}
			\frac{1}{\sqrt{\Delta_n}}\left(U_n(u,v)-\int_0^{T}\mbox{e}^{-\langle (u, v), ((\sigma_{s}^{X})^2, (\sigma_{s}^{Y})^2)\rangle}\mathrm{d}s\right)\stackrel{\cal S}{\longrightarrow} \Phi_T(u,v),
		\end{equation}
where the convergence is on the space ${\cal C}(\mathbb{R}_+^2)$ continuous functions indexed by $(u, v)$ and equipped with the local uniform topology {\rm (}i.e., uniformly over compact sets of $(u, v)\in \mathbb{R}_+^2${\rm)}, and the process $\Phi_T(u,v)$ is defined on an extension of the original probability space and is an ${\cal F}$-conditionally Gaussian process with zero-mean function and covariance function $\int_0^TF(\sqrt{u}\sigma_{s}^{X}, \sqrt{v}\sigma_{s}^{Y}, \sqrt{u'}\sigma_{s}^{X}, \sqrt{v'}\sigma_{s}^{Y};\rho_s)\mathrm{d}s$ for every $(u, v, u', v')\in \mathbb{R}_+^4$ with
		\begin{equation*}
			F(x, y,\bar{x}, \bar{y};z)=\frac{1}{2}\mbox{e}^{-(x^2+y^2+\bar{x}^2+\bar{y}^2)}\cdot\left\{\frac{1+\mbox{e}^{{4}x\bar{x}+{4}y\bar{y}}}
			{\mbox{e}^{{2}x\bar{x}+{2}y\bar{y}}}+
			\frac{1+\mbox{e}^{4\bar{x}yz}}{\mbox{e}^{2\bar{x}yz}}+
			\frac{1+\mbox{e}^{4x\bar{y}z}}{\mbox{e}^{2x\bar{y}z}}-6\right\}.
		\end{equation*}
		\begin{ignore}{
				A consistent estimator for the covariance function of $\Phi_T(u,v)$ is given by
				\begin{eqnarray}\label{estvariance}
					\hat{\Gamma}_n&=&\frac{\Delta_n}{2}\sum_{i=1}^{n-1}\xi_i^n(u+u'+\sqrt{uu'}, v+v'+\sqrt{vv'})\\
					&&+\frac{\Delta_n}{2}\sum_{i=1}^{n-1}\xi_i^n(u+u'-\sqrt{uu'}, v+v'-\sqrt{vv'})\\
					&&+\Delta_n\sum_{i=1}^{n-1}\cos(\frac{\sqrt{2u}\Delta_i^n{X_1}+\sqrt{2v'}\Delta_{i+1}^n{X_2}}{\sqrt{\Delta_n}})\cdot
					\Delta_n\sum_{i=1}^{n}\cos(\frac{\sqrt{2u'}\Delta_i^n{X_1}}{\sqrt{\Delta_n}}\big)\cos(\frac{\sqrt{2v}\Delta_i^n{X_2}}{\sqrt{\Delta_n}})\nonumber\\
					&&+\Delta_n\sum_{i=1}^{n-1}\cos(\frac{\sqrt{2u'}\Delta_i^n{X_1}+\sqrt{2v}\Delta_{i+1}^n{X_2}}{\sqrt{\Delta_n}})\cdot
					\Delta_n\sum_{i=1}^{n}\cos(\frac{\sqrt{2u}\Delta_i^n{X_1}}{\sqrt{\Delta_n}}\big)\cos(\frac{\sqrt{2v'}\Delta_i^n{X_2}}{\sqrt{\Delta_n}})\nonumber\\
					&&-3\Delta_n\sum_{i=1}^{n-1}\cos(\frac{\sqrt{2(u+u')}\Delta_i^n{X_1}+\sqrt{2(v+v')}\Delta_{i+1}^n{X_2}}{\sqrt{\Delta_n}}).
				\end{eqnarray}
		}\end{ignore}
	
		A consistent estimator of the asymptotic covariance function is given by
		%\begin{eqnarray}
		%\tilde{\Gamma}_n&=&\frac{\Delta_n}{2}\sum_{i=1}^{n-3}\big((\xi_i^n(u,v)-\xi_{i+2}^n(u,v)\big)\big(\xi_i(u',v')-\xi_{i+2}^n(u',v')\big).
		%\end{eqnarray}
		\begin{align*}
			{\tilde{\Gamma}_n}=&\Delta_{n}\sum_{i=1}^{n-1}\mathrm{cos}\left(\frac{\sqrt{2u}\Delta_{i}^{n}X+\sqrt{2v}\Delta_{i+1}^{n}Y}{\sqrt{\Delta_{n}}}\right)\mathrm{cos}\left(\frac{\sqrt{2u'}\Delta_{i}^{n}X+\sqrt{2v'}\Delta_{i+1}^{n}Y}{\sqrt{\Delta_{n}}}\right)\nonumber\\ &+\Delta_{n}\sum_{i=1}^{n-2}\mathrm{cos}\left(\frac{\sqrt{2u}\Delta_{i-1}^{n}X}{\sqrt{\Delta_{n}}}\right) \mathrm{cos}\left(\frac{\sqrt{2v}\Delta_{i+1}^{n}Y}{\sqrt{\Delta_{n}}}\right)\mathrm{cos}\left(\frac{\sqrt{2u'}\Delta_{i+1}^{n}X}{\sqrt{\Delta_{n}}}\right)\mathrm{cos}\left(\frac{\sqrt{2v'}\Delta_{i+2}^{n}Y}{\sqrt{\Delta_{n}}}\right)\nonumber\\
			&+\Delta_{n}\sum_{i=1}^{n-2}\mathrm{cos}\left(\frac{\sqrt{2u'}\Delta_{i-1}^{n}X}{\sqrt{\Delta_{n}}}\right) \mathrm{cos}\left(\frac{\sqrt{2v'}\Delta_{i+1}^{n}Y}{\sqrt{\Delta_{n}}}\right)\mathrm{cos}\left(\frac{\sqrt{2u}\Delta_{i+1}^{n}X}{\sqrt{\Delta_{n}}}\right)\mathrm{cos}\left(\frac{\sqrt{2v}\Delta_{i+2}^{n}Y}{\sqrt{\Delta_{n}}}\right)\nonumber\\
			&-3\Delta_{n}\sum_{i=1}^{n-1}\mathrm{cos}\left(\frac{\sqrt{2(u+u')}\Delta_{i}^{n}X+\sqrt{2(v+v')}\Delta_{i+1}^{n}Y}{\sqrt{\Delta_{n}}}\right).
		\end{align*}
	\end{thm}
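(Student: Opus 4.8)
The plan is to reuse the three-stage reduction already in place for $V_n$ in Theorem~\ref{thm1} and then confront the one genuinely new feature of the overlapped estimator, namely the correlation between adjacent summands. First I would \emph{localize}: by the standard localization procedure (see \cite{jacod2012discretization}) it suffices to prove the statement under the strengthened version of Assumptions~\ref{asu1} and \ref{asu2} in which all coefficients are uniformly bounded and the jump sizes are dominated by a deterministic integrable function. Next, invoking Assumption~\ref{asu1} together with the boundedness and Lipschitz continuity of $\cos$, I would discard the drift and the finite-variation jumps and freeze the spot volatilities at the left endpoints of the relevant intervals, i.e. replace $\xi_i^n(u,v)$ by $\bar\xi_i^n(u,v)=\cos\big((\sqrt{2u}\,\sigma^X_{(i-1)\Delta_n}\Delta_i^nW^X+\sqrt{2v}\,\sigma^Y_{i\Delta_n}\Delta_{i+1}^nW^Y)/\sqrt{\Delta_n}\big)$. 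These are exactly the approximations used for $V_n$, and I would show as there that the resulting errors are $o_P(\sqrt{\Delta_n})$ after scaling.

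After this reduction the scaled, centered statistic is $\sqrt{\Delta_n}\sum_{i=1}^{n-1}\zeta_i^n(u,v)$ with $\zeta_i^n(u,v)=\bar\xi_i^n(u,v)-\exp(-\langle(u,v),((\sigma^X_{(i-1)\Delta_n})^2,(\sigma^Y_{i\Delta_n})^2)\rangle)$. The essential difference from Theorem~\ref{thm1} is that $\bar\xi_i^n$ involves the increments $\Delta_i^nW^X$ and $\Delta_{i+1}^nW^Y$, so $\bar\xi_i^n$ and $\bar\xi_{i+1}^n$ share the slot $[i\Delta_n,(i+1)\Delta_n]$, on which $\Delta_{i+1}^nW^Y$ and $\Delta_{i+1}^nW^X$ are correlated with coefficient $\rho_{i\Delta_n}$. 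Thus the array is only (conditionally) $1$-dependent rather than a martingale-difference array, and the limiting variance acquires the two nearest-neighbour covariances in addition to the diagonal term. I would evaluate the three contributions by the Gaussian identity $\mathbb E\cos(aN_1+bN_2)=\exp(-\tfrac12(a^2+b^2+2ab\,r))$ for bivariate normal $(N_1,N_2)$ with correlation $r$ and the product-to-sum formula $\cos A\cos B=\tfrac12(\cos(A-B)+\cos(A+B))$. The diagonal term $\mathbb E[\zeta_i^n(u,v)\zeta_i^n(u',v')\mid\mathcal F_{i-1}]$ produces the piece $(1+e^{4x\bar x+4y\bar y})/e^{2x\bar x+2y\bar y}$; the forward covariance $\mathrm{Cov}(\zeta_i^n(u,v),\zeta_{i+1}^n(u',v'))$ correlates $\sqrt v\,\sigma^Y$ with $\sqrt{u'}\,\sigma^X$ through $\rho$ and produces the $\bar x y z$ term, while the backward covariance $\mathrm{Cov}(\zeta_i^n(u,v),\zeta_{i-1}^n(u',v'))$ correlates $\sqrt u\,\sigma^X$ with $\sqrt{v'}\,\sigma^Y$ and produces the $x\bar y z$ term. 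Summing the band over $i$ and passing to the Riemann integral yields precisely $\int_0^T F(\sqrt u\sigma^X_s,\sqrt v\sigma^Y_s,\sqrt{u'}\sigma^X_s,\sqrt{v'}\sigma^Y_s;\rho_s)\,\mathrm ds$; the prefactor $\tfrac12$ is intrinsic to the product-to-sum identity, and the $-6$ correction arises from the three centering subtractions (each mean product contributing $-2$).

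To turn this variance computation into stable convergence I would use a big-block/small-block martingale approximation: partition the indices into big blocks of length $k_n\to\infty$ with $k_n\Delta_n\to0$, separated by single-index small blocks, so that distinct big-block sums no longer share an interval and hence become conditionally uncorrelated, forming a martingale-difference array, while the omitted small blocks are negligible. I would then apply the stable martingale CLT of \cite{jacod2012discretization}, verifying (i) convergence of the conditional variance to $\int_0^TF\,\mathrm ds$ computed above, (ii) a conditional Lyapunov bound (immediate from the boundedness of $\cos$), and (iii) the orthogonality condition $\mathbb E[\Delta M^n_i\,\Delta N_{t_i^n}\mid\mathcal F_{i-1}]\to0$ for $N\in\{W^X,W^\star\}$ and for every bounded martingale $N$ orthogonal to $(W^X,W^\star)$, which delivers the $\mathcal F$-conditional Gaussianity and the zero mean of $\Phi_T$. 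Finite-dimensional stable convergence in $(u,v,u',v')$ then follows by the Cram\'er--Wold device, and to upgrade to functional convergence on ${\cal C}(\mathbb R_+^2)$ I would prove tightness by a Kolmogorov criterion, bounding $\mathbb E|\Phi_n(u,v)-\Phi_n(\tilde u,\tilde v)|^{2p}$ by a power of $|(u,v)-(\tilde u,\tilde v)|$ using the smoothness of $(u,v)\mapsto\cos(\cdot)$ and the boundedness of $\sigma^Z$.

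Finally, the consistency of $\tilde\Gamma_n$ follows from the weak law of large numbers applied term by term, each sum matching one piece of $F$: conditioning and the same Gaussian computations show that the first sum converges to the diagonal piece, the second and third sums (whose staggered index patterns $\Delta_{i-1}^nX,\Delta_{i+1}^nY,\Delta_{i+1}^nX,\Delta_{i+2}^nY$ are designed to expose exactly one shared interval) converge to the two $\rho_s$-dependent cross pieces, and the fourth sum supplies the $-6$ correction once the $\tfrac12$ prefactor is taken into account. The main obstacle throughout is the overlap-induced dependence: I expect the delicate steps to be, first, designing the blocking so that it decouples the big blocks while faithfully retaining the within-block nearest-neighbour covariances, and second, the bookkeeping needed to pin down the two $\rho_s$-dependent cross-covariance terms exactly, since it is precisely these extra terms---absent in Theorem~\ref{thm1}---that both complicate the analysis and produce the efficiency gain of the overlapped estimator.
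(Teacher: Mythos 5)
Your proposal is correct and follows essentially the same route as the paper: reduce to the frozen-volatility cosine array, observe that the overlapped increments make it conditionally $1$-dependent, decouple via a big-block/small-block martingale approximation, verify the conditions of the stable martingale CLT (with the conditional covariance splitting into the diagonal piece and the two $\rho_s$-dependent nearest-neighbour pieces exactly as you describe), then establish tightness and prove consistency of $\tilde\Gamma_n$ by the law of large numbers. The only cosmetic difference is that you let the block length $k_n\to\infty$ with $n$, whereas the paper fixes the block length $p$, passes $n\to\infty$, and then lets $p\to\infty$ (so the covariance appears as $F_p\to F$); both are standard implementations of the same argument.
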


\begin{rem}$U_{n}(u,v)$ makes good use of the data compared to $V_{n}(u,v)$ and naturly have smaller asymptotic variance. Indeed, some straightforward computation can show this, and here we display the plots of two asymptotic variances as a function of $(u,v)$
\ignore{
The following Proposition \ref{pro1} reveals the relationship between the asymptotic variance of $V_{n}(u,v)$ and  $U_{n}(u,v)$.
\begin{pro}\label{pro1}
The asymptotic variance of $U_{n}(u,v)$ is smaller than $V_{n}(u,n)$.
\end{pro}
\begin{proof}
We need to prove
\begin{eqnarray*}
% \nonumber % Remove numbering (before each equation)
&&e^{-2(u\sigma_{x}^2+v\sigma_{y}^2)}\cdot\left\{e^{-2(u\sigma_{x}^2+v\sigma_{y}^2)}+e^{2(u\sigma_{x}^2+v\sigma_{y}^2)}-2\right\}\\
&>&\frac{1}{2}e^{-2(u\sigma_{x}^2+v\sigma_{y}^2)}\cdot\left\{e^{-2(u\sigma_{x}^2+v\sigma_{y}^2)}+e^{2(u\sigma_{x}^2+v\sigma_{y}^2)}+2\left(e^{-2\sqrt{uv}\sigma_{x}\sigma_{y}\rho}+e^{2\sqrt{uv}\sigma_{x}\sigma_{y}\rho}\right)-6\right\}
\end{eqnarray*}
For convenience of expression, we let,
\begin{eqnarray*}
% \nonumber % Remove numbering (before each equation)
  a:&=& e^{-2(u\sigma_{x}^2+v\sigma_{y}^2)}+e^{2(u\sigma_{x}^2+v\sigma_{y}^2)}-2 ,\\
  b:&=& 2\left(e^{-2\sqrt{uv}\sigma_{x}\sigma_{y}\rho}+e^{2\sqrt{uv}\sigma_{x}\sigma_{y}\rho}-4\right).
\end{eqnarray*}
It suffices to prove $a>b$. Further, we define $c:=u\sigma_{x}^{2}+v\sigma_{y}^2$, $d:=2\sqrt{uv}\sigma_{x}\sigma_{y}$, $c,d\in\mathbb{R}^{+}$. It is easy to find $c\geq d$, $|d\rho|\leq |d|$, and
\begin{eqnarray*}
% \nonumber % Remove numbering (before each equation)
  a&=&e^{-2c}+e^{2c}-2\\
   &\geq&e^{-2d}+e^{2d}-2\\
   &>&2(e^{-d}+e^{d})-4\\
   &\geq&2(e^{-d\rho}+e^{d\rho})-4=b\\
\end{eqnarray*}
\end{proof}
\end{rem}
}
%\begin{rem}
in Figure \ref{figure0}. In the figure, we let $T=1$, $\sigma_{s}^{X}=1$, $\sigma_{s}^{Y}=1$, $u=u'$, $v=v'$ and $\rho=0.5$. From the Figure \ref{figure0}, we see that the asymptotic variance of $U_n(u,v)$ is uniformly smaller than that of $V_n(u,v)$.
\begin{figure}[htb]
		\centering
	\includegraphics[width=7cm,height=5.5cm]{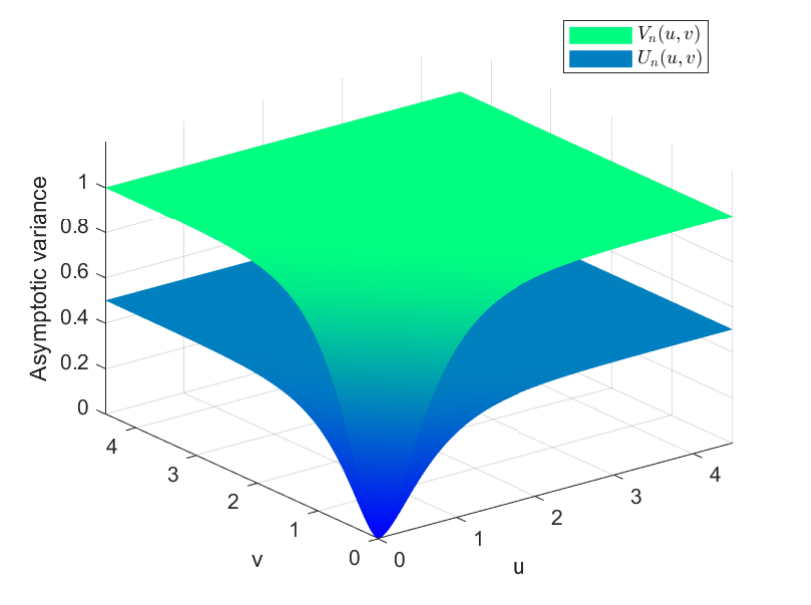}~~~~\includegraphics[width=7cm,height=5.5cm]{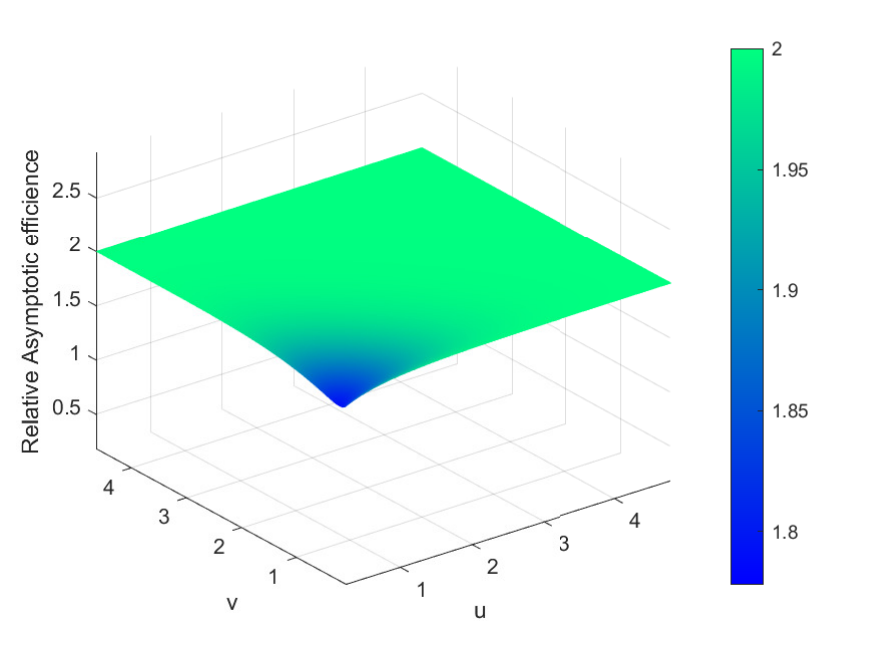}
			%  \label{label_for_cross_ref_1}
\label{figure0}	
\caption{The asymptotic variance of $V_n(u,v)$ and $U_n(u,v)$}	
\end{figure}
\end{rem}

Although $U_{n}(u,v)$ is better than $V_{n}(u,v)$ in the sense of estimation performance, we still keep $V_{n}(u,v)$ in the paper for the following reasons. First, it is seen that the asymptotic variance of $V_{n}(u,v)$ does not depend on the correlation process $\rho_t$ of $X$ and $Y$. Thus, it is robust in the dependence between two latent processes. Second, we can actually get an improved version of $V_{n}(u,v)$ by adding the other symmetric part, namely,
    $$V_{n}'(u,v)=\Delta_{n}\sum_{i=1}^{\lfloor n/2\rfloor}\left[\cos\left(\frac{\sqrt{2u}\Delta_{2i-1}^{n}X+\sqrt{2v}\Delta_{2i}^{n}Y}{\sqrt{\Delta_{n}}}\right)+\cos\left(\frac{\sqrt{2v}\Delta_{2i-1}^{n}Y+\sqrt{2u}\Delta_{2i}^{n}X}{\sqrt{\Delta_{n}}}\right)\right].$$
We can get the central limit theory of $V'_{n}(u,v)$ as the following:
    \begin{equation}
			\frac{1}{\sqrt{\Delta_n}}\left(V'_n(u,v)-\int_0^{T}\mbox{e}^{-\langle (u, v), ((\sigma_{s}^{X})^2, (\sigma_{s}^{Y})^2)\rangle}\mathrm{d}s\right)\stackrel{\cal S}{\longrightarrow} \Phi_T(u,v),
		\end{equation}
where the process $\Phi_T(u,v)$ is the same as the limit process of $U_{n}(u,v)$. We also include this estimator in the simulation study in Section \ref{simulation}. However, using a similar modification to the estimator $U_{n}(u,v)$ does not improve the estimation efficiency.

\ignore{
We can use the same technology on estimator $U_{n}(u,v)$, and achieve the following estimator:
$$U'_{n}(u,v)=\Delta_{n}\sum_{i=1}^{n-1}\frac{1}{2}\left[\cos\left(\frac{\sqrt{2u}\Delta_{i}^{n}X+\sqrt{2v}\Delta_{i+1}^{n}Y}{\sqrt{\Delta_{n}}}\right)+\cos\left(\frac{\sqrt{2v}\Delta_{i}^{n}Y+\sqrt{2u}\Delta_{i+1}^{n}X}{\sqrt{\Delta_{n}}}\right)\right].$$
It is very cumbersome to calculate the asymptotic covariance function of the limit process. However, we list the simulation results in Section \ref{simulation} and compare them to the other three estimators.
}

\section{Joint Infill and Long-time Span Asymptotics}\label{longspan}
 In this section, we extend the above analysis (for fixed $T$) and consider the asymptotic results for the stationary process when $T\to\infty$ jointly $\Delta_{n}\to 0$. To facilitate the derivation of this theory, we require some additional notations. We define,
 \begin{align*}
 Z_{t}^{x,y}(u,v)&=\int_{t-1}^{t}e^{-u(\sigma_{s}^{x})^2-v(\sigma_{s}^{y})^2}{\rm{d}}s,
 &\hat{Z}_{t}^{x,y}(u,v)&=\sum\limits_{\lfloor(t-1)/\Delta_{n}\rfloor+1}^{\lfloor t/\Delta_{n}\rfloor}\Delta_{n}\cos\left(\frac{\sqrt{2u}\Delta_i^n{X}+\sqrt{2v}\Delta_{i+1}^n{Y}}{\sqrt{\Delta_n}}\right),\\
 Z_{t}^{x}(u)&=\int_{t-1}^{t}e^{-u(\sigma_{s}^{x})^2}ds,
 &\hat{Z}_{t}^{x}(u)&=\sum\limits_{\lfloor(t-1)/\Delta_{n}\rfloor+1}^{\lfloor t/\Delta_{n}\rfloor}\Delta_{n}\cos\left(\frac{\sqrt{2u}\Delta_i^n{X}}{\sqrt{\Delta_n}}\right),\\
 Z_{t}^{y}(v)&=\int_{t-1}^{t}e^{-u(\sigma_{s}^{y})^2}ds,
 &\hat{Z}_{t}^{y}(v)&=\sum\limits_{\lfloor(t-1)/\Delta_{n}\rfloor+1}^{\lfloor t/\Delta_{n}\rfloor}\Delta_{n}\cos\left(\frac{\sqrt{2u}\Delta_i^n{Y}}{\sqrt{\Delta_n}}\right),
\end{align*}
and $\mu_{t}^{x,y}(u,v):=\mathbb{E}[Z_{t}^{x,y}(u,v)]$, $\mu_{t}^{x}(u):=\mathbb{E}[Z_{t}^{x}(u)]$, $\mu_{t}^{y}(v):=\mathbb{E}[Z_{t}^{y}(v)]$. Due to the stationarity, we have
$$\mu_{t}^{x,y}(u,v)=\mu_{1}^{x,y}(u,v), \mu_{t}^{x}(u)=\mu_{1}^{x}(u), \mu_{t}^{y}(v)=\mu_{1}^{y}(v).$$
The key result is a central limit theorem of sample averages of the Laplace transform of volatilities, based on which we can estimate $\mu_{t}^{x,y}$, $\mu_{t}^{x}$ and $\mu_{t}^{y}$. Secondly, we can test whether $\sigma_{s}^x$ and $\sigma_{s}^y$ are dependent on each other.
%The main idea of the CLT is that using $\frac{1}{T}\sum_{t=1}^{T}\hat{Z}(\cdot)$ to estimate $\frac{1}{T}\sum_{t=1}^{T}Z(\cdot)$, the magnitudes of discretization error of which causes can be quantified by Theorem \ref{thm2}.
%And further provide the CLT of empirical process $\frac{1}{T}\sum_{t=1}^{T}Z(\cdot)-\mu_{t}(\cdot)$ under long-span setting.
We need the following assumption to derive the long-time span asymptotic behavior.

\begin{asu}\label{asu3}
The volatility processes $\sigma_{t}^{z}$, $z=x,y$ are stationary and $\alpha$-mixing processes with coefficient $\alpha^{mix}=O(t^{-\gamma})$ for some  $\gamma>1$ when $t\to \infty$, where
$$\alpha_{t}^{mix}=\underset{A\in {\mathcal{F}}_{0},B\in {\mathcal{F}}^{t}}{\rm sup}|\mathbb{P}(A\cap B)-\mathbb{P}(A)\mathbb{P}(B)|,$$
$${\mathcal{F}}_{0}=\sigma(\sigma_{s}^{z}, { W_{s}^{z}}, z=x,y, s\leq 0) \quad and\quad {\mathcal{F}}^{t}=\sigma(\sigma_{s}^{z}, { W_{s}^{z}-W_{t}^{z}}, z=x,y, s\geq t).$$
\end{asu}	

The stationary and mixing condition in Assumption \ref{asu3} is required to ensure the central limit theorem exists under the long-span setting. In short, we hope to restrict the memory of the volatilities so that the volatility processes are not ``too'' strongly dependent. Compared with mixing conditions of \cite{todorov2012realized}, the Assumption \ref{asu3} is slightly weaker and not restrictive in practice as it can capture a wide variety of volatility models, such as a large class of processes driven by Brownian motion(e.g., \cite{heston1993closed}) or the L\'{e}vy-driven Ornstein-Uhlenbeck model of \cite{barndorff2001non}, where volatility is governed by general (positive) processes.

\begin{thm}\label{thm3}
Under Assumption \ref{asu1}--\ref{asu3}, if $T\to\infty$, $\Delta_{n}\to 0$ and $\sqrt{T\Delta_{n}}\to 0$, we have
$$\sqrt{T}\begin{pmatrix}
\frac{1}{T}\sum\limits_{t=1}^{T}\hat{Z}_{t}^{x,y}(u,v)-\mu_{1}^{x,y}(u,v)\\
\frac{1}{T}\sum\limits_{t=1}^{T}\hat{Z}_{t}^{x}(u)-\mu_{1}^{x}(u)\\
\frac{1}{T}\sum\limits_{t=1}^{T}\hat{Z}_{t}^{y}(v)-\mu_{1}^{y}(v)\\
\end{pmatrix}
\stackrel{\cal S}{\longrightarrow} {\bf \Phi}(u,v),$$
where the convergence is on the space ${\cal C}(\mathbb{R}_+^2)$ of continuous functions indexed by $(u, v)$ and equipped with the local uniform topology {\rm (}i.e., uniformly over compact sets of $(u, v)\in \mathbb{R}_+^2${\rm )}, and the process ${\bf \Phi}(u,v)$ is defined on an extension of the original probability space and is an ${\cal F}$-conditionally Gaussian process with zero-mean function and covariance function matrices $V([u,v],[u',v'])$ for every $(u, v, u', v')\in \mathbb{R}_+^4$ with
%\begin{eqnarray*}
% \nonumber % Remove numbering (before each equation)
%  V_{11}([u,v],[u',v'])&=&\mathbb{E}[Z_{1}^{x,y}(u,v)-\mu_{1}^{x,y}(u,v)]^2\\
%  &&+2\sum_{l=1}^{\infty}\mathbb{E}[Z_{1}^{x,y}(u,v)-\mu_{1}^{x,y}(u,v)][Z_{l+1}^{x,y}(u',v')-\mu_{1}^{x,y}(u',v')]\\
%  V_{12}([u,v],[u',v'])&=&\mathbb{E}[Z_{1}^{x,y}(u,v)-\mu_{1}^{x,y}(u,v)][Z_{1}^{x}(u)-\mu_{1}^{x}(u)]\\
%  &&+2\sum_{l=1}^{\infty}\mathbb{E}[Z_{1}^{x,y}(u,v)-\mu_{1}^{x,y}(u,v)][Z_{l+1}^{x}(u')-\mu_{1}^{x}(u')]\\
% \end{eqnarray*}
\begin{equation*}
V_{ab}([u,v],[u',v'])=\mathbb{E}[A_{a}(0)A'_{b}(0)]+2\sum_{l=1}^{\infty}\mathbb{E}[A_{a}(0)A_{b}'(l)],
\end{equation*}
here $a,b=1,2,3, a\leq b$ and,
\begin{align*}
% \nonumber % Remove numbering (before each equation)
A_{1}(l)&=Z_{l+1}^{x,y}(u,v)-\mu_{1}^{x,y}(u,v),& A_{1}'(l)&=Z_{l+1}^{x,y}(u',v')-\mu_{1}^{x,y}(u',v'),\\
A_{2}(l)&=Z_{l+1}^{x}(u)-\mu_{1}^{x}(u),&  A_{2}'(l)&=Z_{l+1}^{x}(u')-\mu_{1}^{x}(u'),\\
A_{3}(l)&=Z_{l+1}^{y}(v)-\mu_{1}^{y}(u),&  A_{3}'(l)&=Z_{l+1}^{y}(v')-\mu_{1}^{y}(v').
\end{align*}
Moreover, if $L_{T}$ is a deterministic sequence of integers satisfying $\frac{L_{T}}{\sqrt{T}}\to 0$ and $L_{T}\Delta_{n}^{1/2}\to 0$ as $T\to \infty$, $\Delta_{n}\to 0$, a consistent estimator of the asymptotic covariance function matrices is given by $\hat{V}([u,v],[u', v'])$ for every $(u, v, u', v')\in \mathbb{R}_+^4$ with
\begin{equation*}
\hat{V}_{ab}([u,v],[u', v'])=\hat{C}_{ab}^{0}([u,v],[u', v'])+\sum_{i=1}^{L_{T}}\omega(i,L_{T})(\hat{C}_{ab}^{i}([u,v],[u',v'])+\hat{C}_{ab}^{i}([u',v'],[u,v])),
\end{equation*}
where, $a,b=1,2,3, a\leq b$. The $\omega(i, L_{T})$ is either a Bartlett or Parzen Kernel. In addition,
%\begin{eqnarray*}
% \nonumber % Remove numbering (before each equation)
%\hat{C}_{11}^{l}([u,v],[u',v'])&=&\frac{1}{T}\sum_{t=l+1}^{T}(\hat{Z}_{t}^{x,y}(u,v)-\frac{1}{T}\sum_{t=1}^{T}\hat{Z}_{t}^{x,y}(u,v))(\hat{Z}_{t-l}^{x,y}(u',v')-\frac{1}{T}\sum_{t=1}^{T}\hat{Z}_{t}^{x,y}(u',v'))\\
%\hat{C}_{22}^{l}([u,v],[u',v'])&=&\frac{1}{T}\sum_{t=l+1}^{T}(\hat{Z}_{t}^{x}(u)-\frac{1}{T}\sum_{t=1}^{T}\hat{Z}_{t}^{x}(u))(\hat{Z}_{t-l}^{x}(u')-\frac{1}{T}\sum_{t=1}^{T}\hat{Z}_{t}^{x}(u'))\\
%\hat{C}_{33}^{l}([u,v],[u',v'])&=&\frac{1}{T}\sum_{t=l+1}^{T}(\hat{Z}_{t}^{y}(v)-\frac{1}{T}\sum_{t=1}^{T}\hat{Z}_{t}^{y}(v))(\hat{Z}_{t-l}^{y}(v')-\frac{1}{T}\sum_{t=1}^{T}\hat{Z}_{t}^{y}(v'))\\
%\hat{C}_{12}^{l}([u,v],[u',v'])&=&\frac{1}{T}\sum_{t=l+1}^{T}(\hat{Z}_{t}^{x,y}(u,v)-\frac{1}{T}\sum_{t=1}^{T}\hat{Z}_{t}^{x,y}(u,v))(\hat{Z}_{t-l}^{x}(u')-\frac{1}{T}\sum_{t=1}^{T}\hat{Z}_{t}^{x}(u'))\\
%\hat{C}_{13}^{l}([u,v],[u',v'])&=&\frac{1}{T}\sum_{t=l+1}^{T}(\hat{Z}_{t}^{x,y}(u,v)-\frac{1}{T}\sum_{t=1}^{T}\hat{Z}_{t}^{x,y}(u,v))(\hat{Z}_{t-l}^{y}(v')-\frac{1}{T}\sum_{t=1}^{T}\hat{Z}_{t}^{y}(v'))\\
%\hat{C}_{23}^{l}([u,v],[u',v'])&=&\frac{1}{T}\sum_{t=l+1}^{T}(\hat{Z}_{t}^{x}(u)-\frac{1}{T}\sum_{t=1}^{T}\hat{Z}_{t}^{x}(u))(\hat{Z}_{t-l}^{y}(v')-\frac{1}{T}\sum_{t=1}^{T}\hat{Z}_{t}^{y}(v'))
%\end{eqnarray*}
\begin{equation*}
\hat{C}_{ab}^{l}([u,v],[u',v'])=\frac{1}{T}\sum_{t=l+1}^{T}B_{at}(0)B_{bt}'(l),
\end{equation*}
with
\begin{align*}
% \nonumber % Remove numbering (before each equation)
B_{1t}(l)&=\hat{Z}_{t-l}^{x,y}(u,v)-\frac{1}{T}\sum_{t=1}^{T}\hat{Z}^{x,y}(u,v),& B_{1t}'(l)&=\hat{Z}_{t-l}^{x,y}(u',v')-\frac{1}{T}\sum_{t=1}^{T}\hat{Z}^{x,y}(u',v'),\\
B_{2t}(l)&=\hat{Z}_{t-l}^{x}(u)-\frac{1}{T}\sum_{t=1}^{T}\hat{Z}^{x}(u),& B_{2t}'(l)&=\hat{Z}_{t-l}^{x}(u')-\frac{1}{T}\sum_{t=1}^{T}\hat{Z}^{x}(u'),\\
B_{3t}(l)&=\hat{Z}_{t-l}^{y}(v)-\frac{1}{T}\sum_{t=1}^{T}\hat{Z}^{y}(v),& B_{3t}'(l)&=\hat{Z}_{t-l}^{y}(v')-\frac{1}{T}\sum_{t=1}^{T}\hat{Z}^{y}(v').
\end{align*}
\end{thm}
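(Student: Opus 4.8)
The plan is to split the normalized sum into a discretization (infill) error and a long-span fluctuation of the \emph{continuous} functionals, to handle each separately, and finally to verify the HAC consistency of $\hat V$. Write $\bar Z_t$ for any one of the three continuous quantities $Z_t^{x,y}(u,v)$, $Z_t^{x}(u)$, $Z_t^{y}(v)$, write $\hat{\bar Z}_t$ for the corresponding estimator, and let $\mu_1$ be the associated mean. The basic decomposition is
\[
\frac{1}{\sqrt T}\sum_{t=1}^{T}\bigl(\hat{\bar Z}_t-\mu_1\bigr)=\underbrace{\frac{1}{\sqrt T}\sum_{t=1}^{T}\bigl(\hat{\bar Z}_t-\bar Z_t\bigr)}_{=:R_T}+\underbrace{\frac{1}{\sqrt T}\sum_{t=1}^{T}\bigl(\bar Z_t-\mu_1\bigr)}_{=:S_T}.
\]
First I would show $R_T\stackrel{P}{\longrightarrow}0$ (uniformly in $(u,v)$ over compacts), then establish a functional stable central limit theorem for $S_T$, and finally prove $\hat V\stackrel{P}{\longrightarrow}V$.

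For $R_T$ I would reuse the fixed-interval analysis behind Theorems \ref{thm1} and \ref{thm2}, but now applied on each unit window $[t-1,t]$. On a single window the estimator differs from the integrated Laplace transform by a statistical fluctuation with $\mathcal F$-conditional mean essentially zero and conditional variance $O(\Delta_n)$, plus an approximation error arising from freezing the drift and volatility and from the finite-variation jumps, controlled under Assumptions \ref{asu1}--\ref{asu2}. Since the Brownian increments entering distinct windows are conditionally independent, the fluctuation parts are conditionally uncorrelated across $t$, so their contribution to $R_T$ has variance $\tfrac1T\sum_t O(\Delta_n)=O(\Delta_n)\to0$, requiring only $\Delta_n\to0$. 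The accumulated approximation error is the delicate piece: each window may contribute a bias of order $\sqrt{\Delta_n}$ that need not cancel, so the worst-case size of this part of $R_T$ is $\tfrac{1}{\sqrt T}\cdot T\cdot O(\sqrt{\Delta_n})=O(\sqrt{T\Delta_n})$, which vanishes \emph{precisely} under the assumed rate $\sqrt{T\Delta_n}\to0$.

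For $S_T$, since $0\le e^{-u(\sigma_s^{x})^2-v(\sigma_s^{y})^2}\le1$ each $\bar Z_t$ lies in $[0,1]$, so $\{\bar Z_t\}$ is a bounded stationary sequence inheriting $\alpha$-mixing from the volatility (Assumption \ref{asu3}) with the same $O(t^{-\gamma})$ rate, as $\bar Z_t$ is a measurable functional of the volatility over $[t-1,t]$. With $\gamma>1$ and bounded summands, the covariances $\mathbb E[A_a(0)A_b'(l)]$ are absolutely summable, so the long-run covariance $V_{ab}$ is finite and well defined. Finite-dimensional convergence, jointly across the three components and across $(u,v)$, follows from the Cram\'er--Wold device together with a central limit theorem for stationary $\alpha$-mixing sequences; functional convergence on $\mathcal C(\mathbb R_+^2)$ I would obtain via tightness, using a Kolmogorov-type moment bound on the $(u,v)$-increments of $S_T$ and the fact that $(u,v)\mapsto e^{-u(\sigma^{x})^2-v(\sigma^{y})^2}$ is Lipschitz on compacts with modulus controlled by the volatility moments of Assumption \ref{asu2}. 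Because the volatility is stationary, $V$ is deterministic, so the $\mathcal F$-conditionally Gaussian limit is in fact Gaussian; combined with $R_T\to0$ this yields the stated stable convergence.

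For $\hat V$, this is a standard Bartlett/Parzen heteroskedasticity-and-autocorrelation-consistent long-run variance estimator built from $\hat{\bar Z}_t$. I would (i) show that each fixed-lag sample autocovariance $\hat C_{ab}^{l}$ converges in probability to $\mathbb E[A_a(0)A_b'(l)]$ via the ergodic/mixing law of large numbers together with the negligibility of $\hat{\bar Z}_t-\bar Z_t$; (ii) show that the kernel-weighted sum has vanishing variance under $L_T/\sqrt T\to0$ and vanishing truncation bias by absolute summability of the autocovariances; and (iii) control the replacement of $\bar Z$ by $\hat{\bar Z}$ across all $L_T$ lags, which is exactly where $L_T\Delta_n^{1/2}\to0$ enters. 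Uniformity in $(u,v,u',v')$ over compacts follows again from the Lipschitz smoothness in these arguments. The main obstacle throughout is the joint infill/long-span control of $R_T$: one must show that the per-window approximation errors accumulate no faster than $O(\sqrt{T\Delta_n})$ rather than merely being $o_P(\sqrt{\Delta_n})$ per window, and must do so uniformly in $(u,v)$ so that the process-level (and covariance-level) statements hold.
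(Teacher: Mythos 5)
Your proposal is correct and follows essentially the same route as the paper: the identical decomposition into a discretization error of order $O_p(\sqrt{T\Delta_n})$ (handled by the fixed-window results of Theorems \ref{thm1}--\ref{thm2}) plus a long-span term treated by a CLT for stationary $\alpha$-mixing sequences, with tightness in $(u,v)$ via a Lipschitz/Taylor moment bound and HAC consistency via Andrews-type arguments combined with the replacement error controlled by $L_T\Delta_n^{1/2}\to 0$ and $L_T/\sqrt{T}\to 0$. Your additional remarks (separating the conditionally uncorrelated fluctuation part from the accumulated bias inside the discretization error, and noting that stationarity makes the limit unconditionally Gaussian) are refinements consistent with, and slightly more explicit than, the paper's argument.
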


Finally, an application of the Delta method yields the following result.

\begin{corollary}\label{cor1}
Under the same conditions and notations as Theorem \ref{thm3}, we have
\begin{equation*}
\sqrt{T}\left(\frac{1}{T}\sum_{t=1}^{T}\hat{Z}_{t}^{x,y}(u,v)-\frac{1}{T}\sum_{t=1}^{T}\hat{Z}_{t}^{x}(u)\frac{1}{T}\sum_{t=1}^{T}\hat{Z}_{t}^{y}(v)-\Big(\mu_{1}^{x,y}(u,v)-\mu_{1}^{x}(u)\mu_{1}^{y}(v)\Big)\right)\stackrel{\cal S}\rightarrow {\boldsymbol{\gamma}}\cdot {\bf \Phi}(u,v),
\end{equation*}
where, ${\boldsymbol{\gamma}}=[1,-\mu_{1}^{y}(v),-\mu_{1}^{x}(u)]$.
\end{corollary}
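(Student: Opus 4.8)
The plan is to view the statement as a direct application of the multivariate Delta method to the joint stable limit already established in Theorem \ref{thm3}. Write the three sample averages as
$$\hat{a}_T=\frac{1}{T}\sum_{t=1}^{T}\hat{Z}_{t}^{x,y}(u,v),\quad \hat{b}_T=\frac{1}{T}\sum_{t=1}^{T}\hat{Z}_{t}^{x}(u),\quad \hat{c}_T=\frac{1}{T}\sum_{t=1}^{T}\hat{Z}_{t}^{y}(v),$$
with population counterparts $a=\mu_1^{x,y}(u,v)$, $b=\mu_1^{x}(u)$, $c=\mu_1^{y}(v)$, and consider the smooth map $g(a,b,c)=a-bc$. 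The quantity inside the corollary is precisely $\sqrt{T}\big(g(\hat a_T,\hat b_T,\hat c_T)-g(a,b,c)\big)$, and the gradient of $g$ at $(a,b,c)$ equals $(1,-c,-b)=(1,-\mu_1^{y}(v),-\mu_1^{x}(u))=\boldsymbol{\gamma}$, which already identifies the correct leading coefficient.

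Because $g$ is quadratic, its Taylor expansion is exact with a single bilinear remainder. Setting $\Delta a=\hat a_T-a$, $\Delta b=\hat b_T-b$, $\Delta c=\hat c_T-c$, a direct computation gives
$$g(\hat a_T,\hat b_T,\hat c_T)-g(a,b,c)=\Delta a-c\,\Delta b-b\,\Delta c-\Delta b\,\Delta c=\boldsymbol{\gamma}\cdot(\Delta a,\Delta b,\Delta c)^{\top}-\Delta b\,\Delta c.$$
Multiplying by $\sqrt{T}$ splits the expression into a linear term $\boldsymbol{\gamma}\cdot\sqrt{T}(\Delta a,\Delta b,\Delta c)^{\top}$ and a remainder $-\sqrt{T}\,\Delta b\,\Delta c$. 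For the linear term I would invoke Theorem \ref{thm3}: the centered vector $\sqrt{T}(\Delta a,\Delta b,\Delta c)^{\top}$ converges stably in law to ${\bf \Phi}(u,v)$ on $\mathcal{C}(\mathbb{R}_+^2)$, and since $\boldsymbol{\gamma}$ is a fixed deterministic linear functional, stable convergence is preserved under its action, yielding $\boldsymbol{\gamma}\cdot\sqrt{T}(\Delta a,\Delta b,\Delta c)^{\top}\stackrel{\cal S}{\longrightarrow}\boldsymbol{\gamma}\cdot{\bf \Phi}(u,v)$.

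It then remains to control the remainder $\sqrt{T}\,\Delta b\,\Delta c$. From Theorem \ref{thm3} each of $\sqrt{T}\,\Delta b$ and $\sqrt{T}\,\Delta c$ is stably convergent and hence $O_P(1)$, so $\Delta b=O_P(T^{-1/2})$ and $\Delta c=O_P(T^{-1/2})$; consequently $\sqrt{T}\,\Delta b\,\Delta c=\sqrt{T}\cdot O_P(T^{-1})=O_P(T^{-1/2})\stackrel{P}{\longrightarrow}0$. Because the limit in Theorem \ref{thm3} holds in the local uniform topology on $\mathcal{C}(\mathbb{R}_+^2)$, the factors $\Delta b$ and $\Delta c$ vanish uniformly over compact sets of $(u,v)$ at rate $T^{-1/2}$, so the remainder tends to zero uniformly on compacts, keeping the whole argument at the functional level.

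Finally, invoking the stability analogue of Slutsky's theorem—that adding an $o_P(1)$ term preserves stable convergence—the remainder does not affect the limit, and we conclude $\sqrt{T}\big(g(\hat a_T,\hat b_T,\hat c_T)-g(a,b,c)\big)\stackrel{\cal S}{\longrightarrow}\boldsymbol{\gamma}\cdot{\bf \Phi}(u,v)$, which is exactly the claim. The only mild subtlety, and the step I would be most careful to justify, is the functional control of the bilinear remainder uniformly over compact sets together with the fact that applying the deterministic linear functional $\boldsymbol{\gamma}$ and then adding an $o_P(1)$ perturbation both preserve \emph{stable} convergence, rather than merely convergence in law; everything else is the routine quadratic Delta-method bookkeeping carried out above.
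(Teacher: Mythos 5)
Your proposal is correct and is essentially the paper's own argument: the paper proves Corollary \ref{cor1} simply by invoking the Delta method applied to the joint stable limit of Theorem \ref{thm3}, with the map $g(a,b,c)=a-bc$ whose gradient at $(\mu_1^{x,y}(u,v),\mu_1^{x}(u),\mu_1^{y}(v))$ is exactly $\boldsymbol{\gamma}$. Your explicit handling of the exact quadratic remainder and of the preservation of stable convergence under the deterministic linear functional just spells out the details the paper leaves implicit.
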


This result can be used to test for the dependence of the volatilities. That is, if the volatilities are independent, then for all $(u,v)\in {\cal{R}}_{+}^2$, we have
$$
\mu_{1}^{x,y}(u,v)-\mu_{1}^{x}(u)\mu_{1}^{y}(v)=0,
$$
hence
\begin{equation*}
\hat{\cal S}_{T,n}=\sqrt{T}\left(\frac{1}{T}\sum_{t=1}^{T}\hat{Z}_{t}^{x,y}(u,v)-\frac{1}{T}\sum_{t=1}^{T}\hat{Z}_{t}^{x}(u)\frac{1}{T}\sum_{t=1}^{T}\hat{Z}_{t}^{y}(v)\right)\stackrel{\cal S}\rightarrow {\boldsymbol{\gamma}}\cdot {\bf \Phi}(u,v).
\end{equation*}
According to the Continuous mapping theorem,
$$
\iint[\hat{\cal S}_{T,n}(u,v)]^2{\rm{d}}u{\rm{d}}v \stackrel{\cal S}\rightarrow\iint[ {\boldsymbol{\gamma}} \cdot  {\boldsymbol \Phi}(u,v)]^2{\rm{d}}u{\rm{d}}v.
$$
Therefore, the critical region can be $C=\{||\hat{\cal S}_{T,n}||^2\geq d_{\alpha}$\} where $d_{\alpha}$ can be determined by the results of Theorem \ref{thm3} and the norm $||\cdot||$ is defined in the Hilbert space ${\cal{L}}^{2}$,
$${\cal{L}}^{2}=\left\{f:{\cal{R}}_{+}^{2}\rightarrow{\cal{R}}\Big|\iint_{{\cal{R}}_{+}^{2}}f(u,v)^2{\rm{d}}u{\rm{d}}v < \infty\right\}.$$

\section{Simulation studies}\label{simulation}
In this section, we conduct some simulation studies to examine the finite sample performance of our proposed estimators.
	\subsection{Fixed $T$}
	Throughout this simulation, we consider the fixed time interval with $T=1$ and observe the process at some discrete time points $\{i\Delta_{n},i=1,2,...,n\}$, $n=1760$, which correspond to monthly observations with 5-minutes frequency. We study the following three commonly used stochastic volatility models.
	\begin{ex}\label{ex1}
		We generate the high-frequency data $\{(X_{t}, Y_{t}), t\in [0, T]\}$ from the following processes:
		\begin{equation*}
			\left\{
			\begin{aligned}
				\mathrm{d}X_{t}&=0.03\mathrm{d}t+\sigma_{t}^{X}\mathrm{d}W_{t}^{X},\\
				\mathrm{d}Y_{t}&=0.04\mathrm{d}t+\sigma_{t}^{Y}\mathrm{d}W_{t}^{Y},
			\end{aligned}
			\right.
		\end{equation*}
where, $\mathrm{d}W_{t}^{Y}=\rho_{t}\mathrm{d}W_{t}^{X}+\sqrt{1-\rho_{t}^2}\mathrm{d}W_{t}^{\star}$, $W_{t}^{X}$ and $W_{t}^{\star}$ are two mutually independent Brownian motions. Without loss of generality, we let $\rho_{t}=0.5$ throughout the simulation study. Moreover, $\sigma_{t}^{X}=\mathrm{exp}\{0.3125-0.125\tau_{t}^{X}\}$, $\mathrm{d}\tau_{t}^{X}=-0.025\tau_{t}^{X}\mathrm{d}t+\mathrm{d}B_{t}^{X}$, and $\sigma_{t}^{Y}=\mathrm{exp}\{0.4500-0.325\tau_{t}^{Y}\}$, $\mathrm{d}\tau_{t}^{Y}=-0.030\tau_{t}^{Y}\mathrm{d}t+\mathrm{d}B_{t}^{Y}$.
		Here, $W_{t}^{Z}$ and $B_{t}^{Z}$, $Z=X,Y$ are independent of each other.
	\end{ex}
	
	\begin{ex}\label{ex2}
		$\{(X_{t}, Y_{t}), t\in[0, T]\}$ follows a stochastic volatility model with Poisson jumps:
		\begin{equation*}
			\left\{
			\begin{aligned}
				\mathrm{d}X_{t}&=0.03\mathrm{d}t+\sigma_{t}^{X}\mathrm{d}W_{t}^{X}+\displaystyle\sum_{i=1}^{N_{t}^{X}}Y_{i},\\
				\mathrm{d}Y_{t}&=0.04\mathrm{d}t+\sigma_{t}^{Y}\mathrm{d}W_{t}^{Y}+\displaystyle\sum_{i=1}^{N_{t}^{Y}}Y_{i},
			\end{aligned}
			\right.
		\end{equation*}
		where $N_{t}^{X}$ and $N_{t}^{Y}$ are Poisson processes with $\mathbb{E}[N_{t}^{X}]=2t$, $\mathbb{E}[N_{t}^{Y}]=3t$. Furthermore, we set the jump sizes $Y_{i}\stackrel{i.i.d}\sim N(0,1)$. The generation of $\sigma_{t}^{X}$, $\sigma_{t}^{Y}$, $\mathrm{d}W_{t}^{X}$ and $\mathrm{d}W_{t}^{Y}$ are the same as in Example \ref{ex1}.
	\end{ex}
	
	\begin{ex}\label{ex3}
		$\{(X_{t}, Y_{t}), t\in[0, T]\}$ follows a stochastic volatility model with $\alpha$-stable jumps:
		\begin{equation*}
			\left\{
			\begin{aligned}
				\mathrm{d}X_{t}&=0.03\mathrm{d}t+\sigma_{t}^{X}\mathrm{d}W_{t}^{X}+\mathrm{d}J_{t}^{X},\\
				\mathrm{d}Y_{t}&=0.04\mathrm{d}t+\sigma_{t}^{Y}\mathrm{d}W_{t}^{Y}+\mathrm{d}J_{t}^{Y},
			\end{aligned}
			\right.
		\end{equation*}
		where $\sigma_{t}^{X}$, $\sigma_{t}^{Y}$, $\mathrm{d}W_{t}^{X}$ and $\mathrm{d}W_{t}^{Y}$ are the same as in Example \ref{ex1}, and $J_{t}^{X}$, $J_{t}^{Y}$ are both the symmetric $\alpha$-stable processes with $\alpha_{1}=0.5$ and $\alpha_{2}=0.9$ respectively.
	\end{ex}

%	\subsection{Simulation results}
	
The simulations are repeated 5000 times. We compute the bias, standard deviation (SD), and mean square error (MSE) of our proposed estimators, $V(u,v), U(u,v)$ and $V'(u,v)$. Results are displayed in Table \ref{table1}.
	
Moreover, we assess the asymptotic normality of the estimators $V_n(u,v)$ and $U_n(u,v)$ via both the infeasible and the feasible limit theorems, respectively, and set $u=3.5$, $v=3.75$. In Figure \ref{figure1}, we display the histograms of the infeasible central limit theorem, and the histograms of the studentized statistic are exhibited in Figure \ref{figure2}, only for the setting in {Example \ref{ex3}}, since the figures are similar for other settings.
\begin{table}[htb!]
		\centering
		\caption{Monte Carlo Results of $V_n(u,v)$, $U_n(u,v)$ and $V'_n(u,v)$}
		
		\label{table1}
\resizebox{1.066\columnwidth}{!}{
		\begin{tabular}{cc|ccccccccc}
			\hline
			  \multicolumn{2}{c}{\multirow{2}{*}{$V_{n}(u,v)$}} & \multicolumn{3}{c}{$u=2.5$} & \multicolumn{3}{c}{$u=3.5$} & \multicolumn{3}{c}{$u=4.5$} \\
			  \multicolumn{2}{c}{} & \text{Bias} &\text{SD} &\text{MSE} &\text{Bias} & \text{SD} & \text{MSE} & \text{Bias}&\text{SD} &\text{MSE} \\
			\hline
			\multicolumn{11}{c}{\textbf{Example 1 }} \\
			
			\multirow{3}{*}{$v$} & 2.75 & 0.00024 &	0.02419	& 0.00059 & 0.00077	& 0.02390 & 0.00057 & -0.00050	& 0.02465 & 0.00061\\
			& 3.75 & 0.00046 & 0.02415	& 0.00058 & -0.00014& 0.02398 & 0.00058	& 0.00116	& 0.02447 & 0.00060\\
			& 4.75 & 0.00083 &0.02384	&0.00057  &	-0.00113& 0.02352 & 0.00055	& -0.00141	& 0.02430 & 0.00059\\
			\hline
			\multicolumn{11}{c}{\textbf{Example 2}} \\
			
			\multirow{3}{*}{$v$} & 2.75 & -0.00023&	0.02361	&0.00056 & 0.00184 & 0.02456 & 0.00061	&-0.00067 & 0.02355	& 0.00056\\
			& 3.75 & 0.00048 &	0.02298	&0.00053 & 0.00112 & 0.02271 & 0.00052	& 0.00012 & 0.02329	& 0.00054\\
			& 4.75 & -0.00012&	0.02386	&0.00057 & 0.00053 & 0.02393 & 0.00057	&-0.00126 & 0.02385	& 0.00057\\
			\hline
			\multicolumn{11}{c}{\textbf{Example 3}} \\
			
			\multirow{3}{*}{$v$} & 2.75 & -0.00036&	0.02337	& 0.00055& -0.00081	& 0.02384& 0.00057 & 0.00047 & 0.02255	& 0.00051\\
			& 3.75 & -0.00071&	0.02344	& 0.00055&	0.00082	& 0.02349& 0.00055 & -0.00056& 0.02500	& 0.00063\\
			& 4.75 & -0.00027&	0.02412	& 0.00058& -0.00011	& 0.02371& 0.00056 & -0.00048& 0.02473	& 0.00061\\
			\hline
            \hline
		 \multicolumn{2}{c}{\multirow{2}{*}{$U_{n}(u,v)$}} & \multicolumn{3}{c}{$u=2.5$} & \multicolumn{3}{c}{$u=3.5$} & \multicolumn{3}{c}{$u=4.5$} \\
		  \multicolumn{2}{c}{} & \text{Bias} &\text{SD} &\text{MSE} &\text{Bias} & \text{SD} & \text{MSE} & \text{Bias}&\text{SD} &\text{MSE} \\
			\hline
			\multicolumn{11}{c}{\textbf{Example 1}} \\
			\multirow{3}{*}{$v$} & 2.75 & 0.00071 & 0.01670	&0.00028 & 0.00040	& 0.01713 & 0.00029	& -0.00031 & 0.01667 &0.00028\\
			& 3.75 & -0.00042&	0.01655	&0.00027 & 0.00042	& 0.01655 & 0.00027	& -0.00052 & 0.01713 &0.00029\\
			& 4.75 & -0.00005&	0.01644	&0.00027 & -0.00085	& 0.01608 &	0.00026	& -0.00011 & 0.01713 &0.00029\\
			\hline
			\multicolumn{11}{c}{\textbf{Example 2}} \\
			
			\multirow{3}{*}{$v$} & 2.75 & -0.00015&	0.01670	& 0.00028 & 0.00049  & 0.01682	& 0.00028 & 0.00020	& 0.01638 & 0.00027\\
			& 3.75 & -0.00022&	0.01599	& 0.00026 &	-0.00019 & 0.01705	& 0.00029 & 0.00007	& 0.01637 & 0.00027\\
			& 4.75 & -0.00063&	0.01695	& 0.00029 &	-0.00058 & 0.01660	& 0.00028 &	0.00012	& 0.01671 & 0.00028\\
			\hline
			\multicolumn{11}{c}{\textbf{Example 3}} \\
			
			\multirow{3}{*}{$v$} & 2.75 & -0.00013&	0.01753	&0.00031 &	0.00011& 0.01647 & 0.00027 & -0.00027 &	0.01679 & 0.00028\\
			& 3.75 & 0.00032 &	0.01679	&0.00028 &	-0.0001& 0.01726 & 0.00030& 0.00064  & 0.01746	&0.00031\\
			& 4.75 & -0.00018&	0.01648	&0.00027 &	0.00048& 0.01621 & 0.00026 & -0.00067 & 0.01734	&0.00030\\
			\hline
			\hline
			 \multicolumn{2}{c}{\multirow{2}{*}{$V'_{n}(u,v)$}} & \multicolumn{3}{c}{$u=2.5$} & \multicolumn{3}{c}{$u=3.5$} & \multicolumn{3}{c}{$u=4.5$} \\
			 \multicolumn{2}{c}{} & \text{Bias} &\text{SD} &\text{MSE} &\text{Bias} & \text{SD} & \text{MSE} & \text{Bias}&\text{SD} &\text{MSE} \\
			\hline
			\multicolumn{11}{c}{\textbf{Example 1 }} \\
			
			\multirow{3}{*}{$v$} & 2.75 & -0.00038 & 0.01670 & 0.00028&	-0.00028 &	0.01717& 0.00029& -0.00076& 0.01697& 0.00029\\
			& 3.75 & 0.00057 &	0.01654	&0.00027&	0.00064& 0.01671& 0.00028 & 0.00054 & 0.01644 & 0.00027\\
			& 4.75 & -0.00008&	0.01689	&0.00029&	-0.00005&	0.01673& 0.00028& -0.00081&0.01654&0.00027\\
			\hline
			\multicolumn{11}{c}{\textbf{Example 2}} \\
			
			\multirow{3}{*}{$v$} & 2.75 &-0.00020&0.01660&0.00028&-0.00023&0.01679&0.00028&0.00022&0.01696&0.00029\\

			& 3.75& 0.00022&	0.01643&0.00027&0.00029&0.01697&0.00029&0.00015&0.01676& 0.00028\\
			& 4.75& -0.00071&	0.01683&	0.00028&	-0.00006&	0.01708&0.00029&0.00062&0.01660&0.00028\\
			\hline
			\multicolumn{11}{c}{\textbf{Example 3}} \\
			
			\multirow{3}{*}{$v$} & 2.75 & -0.00053&	0.01719&0.00030&-0.00031&0.01709&0.00029&0.00024& 0.01626&0.00026\\
			& 3.75 &-0.00049&	0.01684&0.00028&-0.00043&	0.01653&	0.00027&	-0.00032&	0.01647&	0.00027\\
			& 4.75 & 0.00015&	0.01678&0.00028&0.00030&	0.01691&0.00029&-0.00048&0.01669&0.00028\\
			\hline
		\end{tabular}
}
\end{table}

	\begin{figure}[!htbp]
		\centering
	
		\subfigure{
			\includegraphics[width=7.5cm,height=8cm]{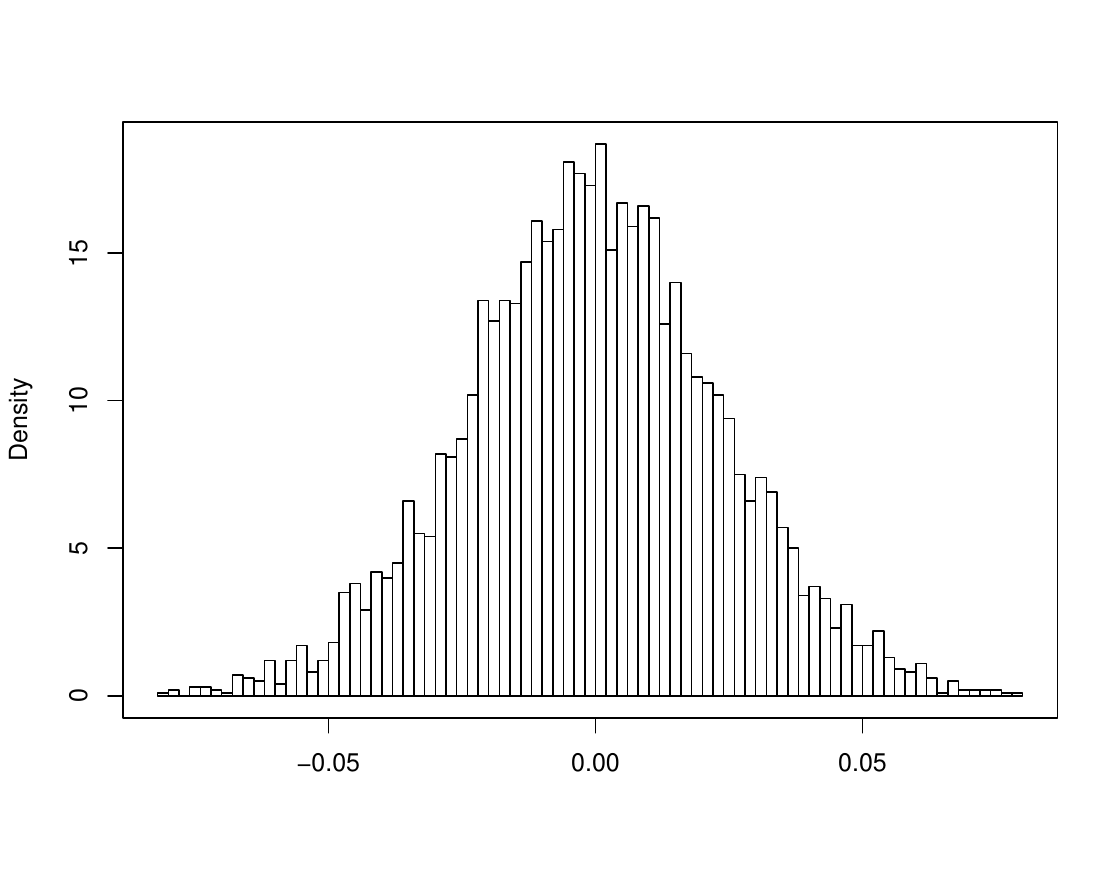}
			%  \label{label_for_cross_ref_5}
		}
		\subfigure{
			\includegraphics[width=7.5cm,height=8cm]{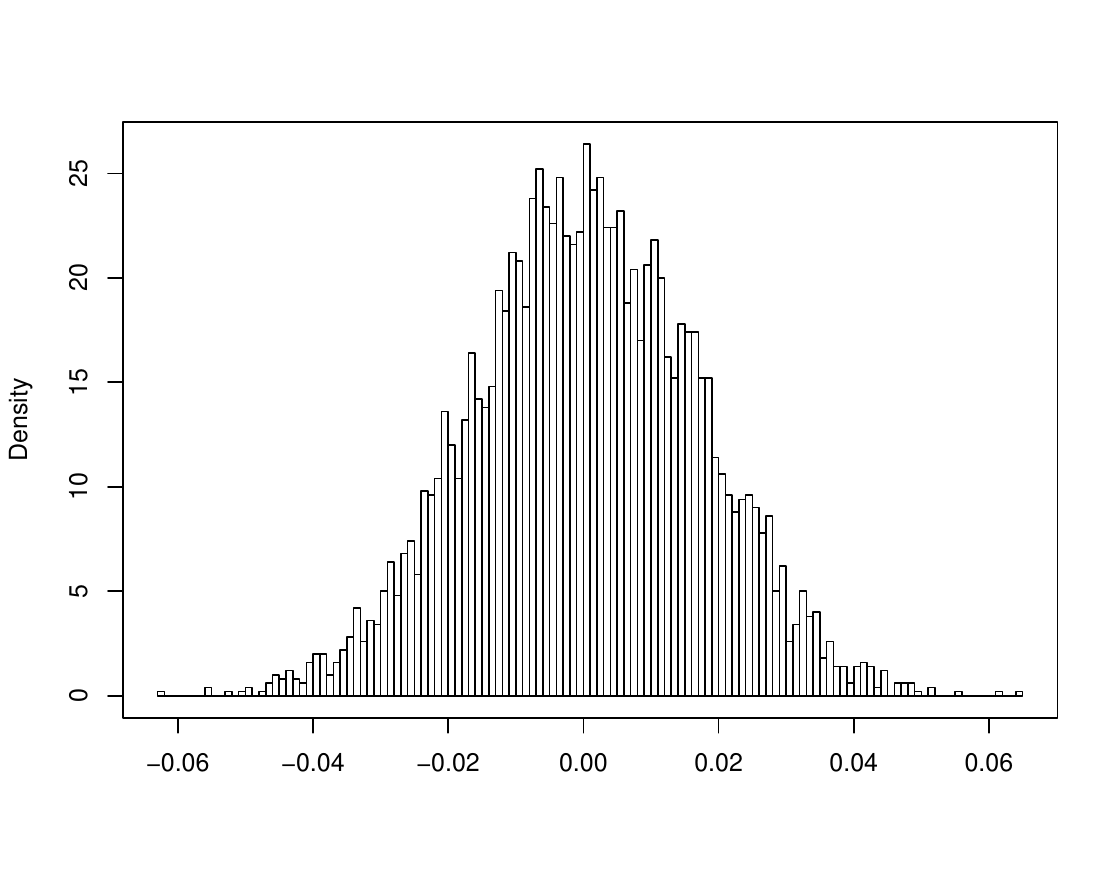}
			%  \label{label_for_cross_ref_6}
		}
		\caption{Left panel: The histograms of $V_n(u,v)-\int_0^1\mbox{e}^{-\langle (u, v), ((\sigma_{s}^{X})^2, (\sigma_{s}^{Y})^2)\rangle}\mathrm{d}s$; Right panel: The histograms of $U_n(u,v)-\int_0^1\mbox{e}^{-\langle (u, v), ((\sigma_{s}^{X})^2, (\sigma_{s}^{Y})^2)\rangle}\mathrm{d}s$.}
		\label{figure1}
	\end{figure}
	
	\begin{figure}[!htbp]
		\centering
		
		\subfigure{
			\includegraphics[width=7.5cm,height=8cm]{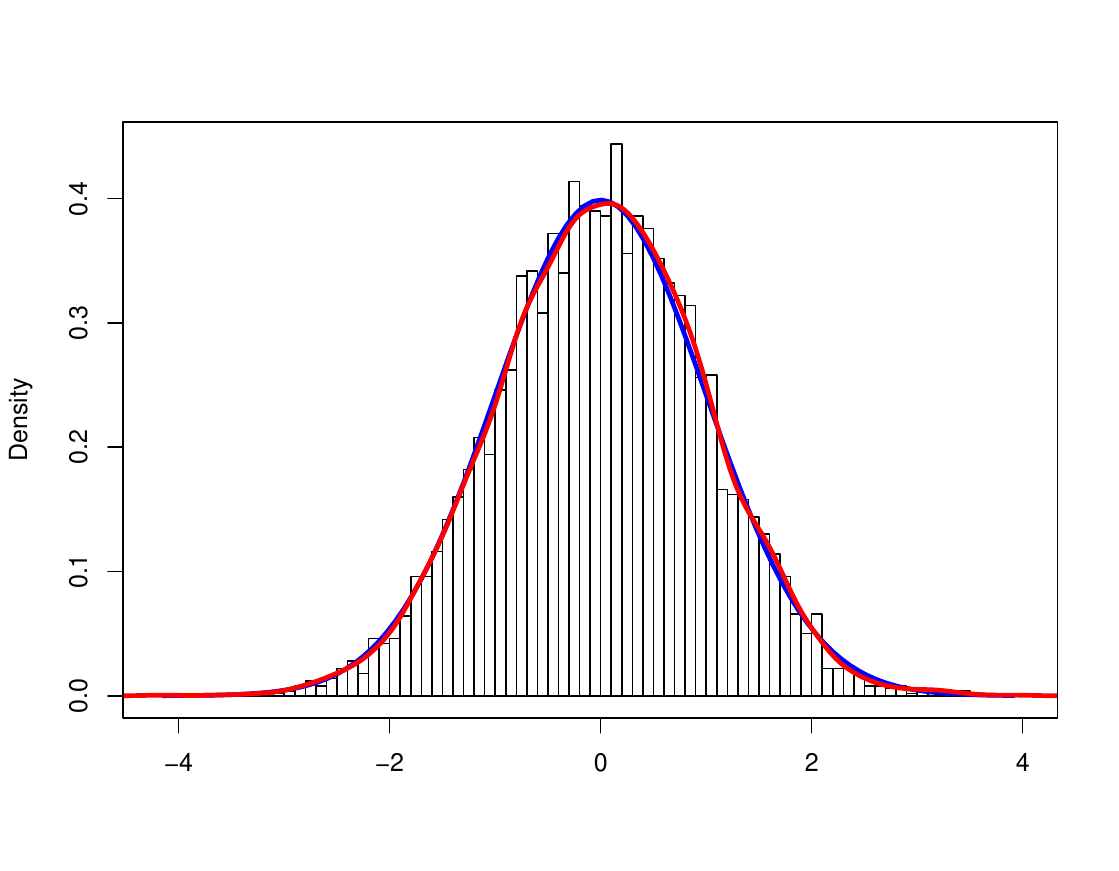}
			%  \label{label_for_cross_ref_5}
		}
		\subfigure{
			\includegraphics[width=7.5cm,height=8cm]{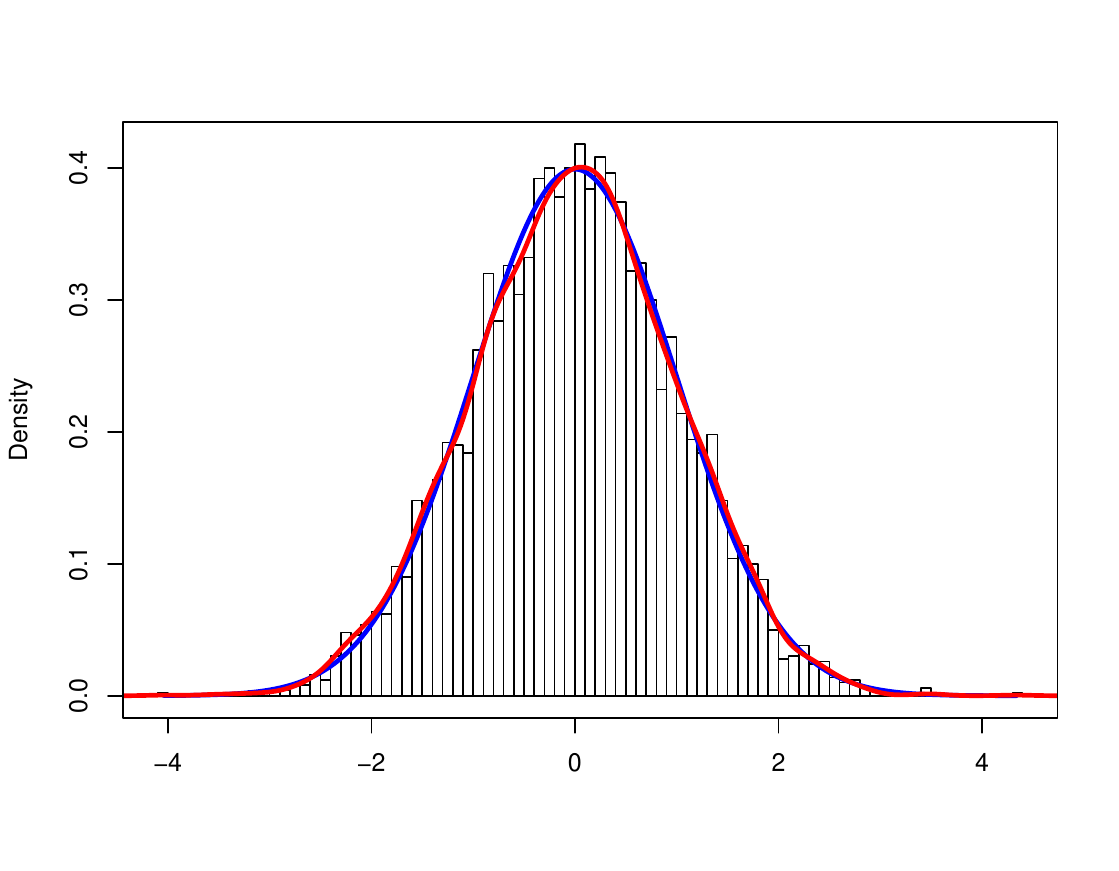}
			%  \label{label_for_cross_ref_6}
		}
		\caption{Left panel: The histograms of $\frac{1}{\sqrt{\Delta_n\tilde{\Gamma}_{n}}}\left(V_n(u,v)-\int_0^1\mbox{e}^{-\langle (u, v), ((\sigma_{s}^{X})^2, (\sigma_{s}^{Y})^2)\rangle}\mathrm{d}s\right)$; Right panel: The histograms of $\frac{1}{\sqrt{\Delta_n\tilde{\Gamma}_{n}}}\left(U_n(u,v)-\int_0^1\mbox{e}^{-\langle (u, v), ((\sigma_{s}^{X})^2, (\sigma_{s}^{Y})^2)\rangle}\mathrm{d}s\right)$.}
		\label{figure2}
	\end{figure}
	
We have the following observations from this simulation.
	\begin{enumerate}
		\item The bias of estimators $V_n(u,v)$, $U_n(u,v)$, $V'_n(u,v)$ are very close to zero, which demonstrates all of our estimators are consistent.
		\item From Table \ref{table1}, $U_n(u,v)$ always performs better than the others. This is in line with our theory.
        \item We see that the results for $U_{n}(u,v)$ and $V'_{n}(u,v)$ are very similar from Table \ref{table1}, which is consistent with the limiting theory we obtained in Section \ref{asymptotic}.
		\item All estimators are robust to the Poisson jumps and $\alpha$-stable jumps with $\alpha<1$.
		\item Figures \ref{figure1} and \ref{figure2} show that the estimators behave like a normal distribution, justifying our asymptotic theory.
	\end{enumerate}

\subsection{Non-synchronous observation}
In real financial markets, we usually only observe asynchronous high-frequency data. The issue has gained significant attention in the literature of financial econometrics, particularly with the advent of large-scale high-dimensional data in recent years. Here, we provide a way to adjust our estimator to handle this issue. As we can see in Figure \ref{figure8}, $t_{i}^{X}$, $t_{i}^{Y}$ are the $i$-th and $t_{n}^{X}$, $t_{n^{*}}^{Y}$ are the last observation time in $[0, T]$, of the price processes $X$ and $Y$, respectively.
%$\Delta_{i,n}^{X}=t_{i}^{X}-t_{i-1}^{X}$,$i=1,...,n$, $\Delta_{i,n^{*}}^{Y}=t_{i}^{Y}-t_{i-1}^{Y}$, $i=1,...,n^{*}$ are independent and both follow Exponential distribution with mean $\lambda=1/1760$.
\begin{figure}[!htbp]
%\centering
\flushright
\includegraphics[width=16cm,height=4cm]{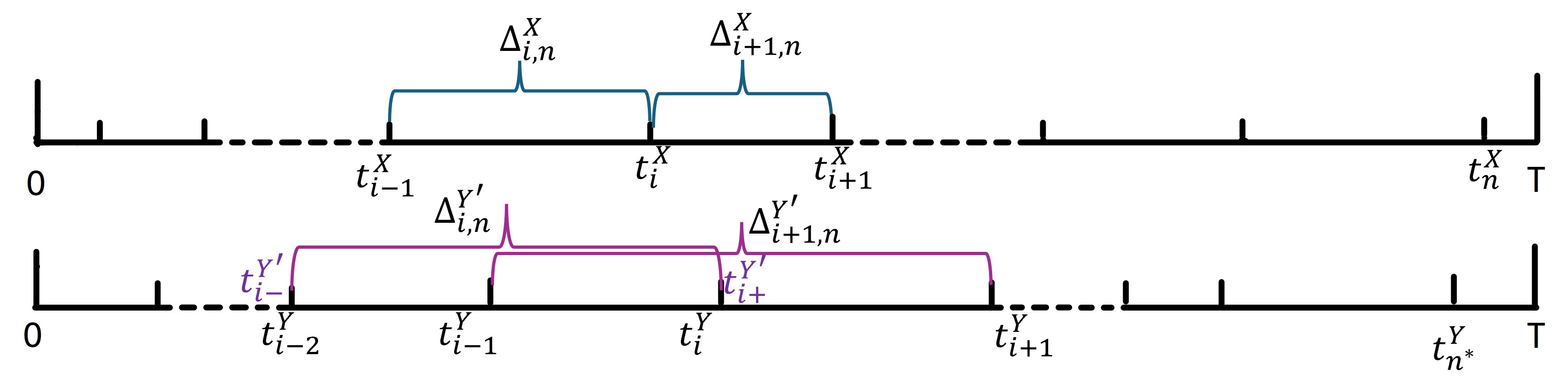}
\caption{The Non-synchronous observations of price processes $X$ and $Y$.}
\label{figure8}
\end{figure}

Here, we adjust the estimator $U_{n}$ to $U'_{n}$ to accommodate this type of data. Define
\begin{equation*}
U'_{n}=\sum_{i=1}^{N_T^X}\Delta_{i,n}^{X}\cos\left(\sqrt{2u}\frac{\Delta_i^n{X}}{\sqrt{\Delta_{i,n}^{X}}}+\sqrt{2v}\frac{\Delta_{i}^{n}{Y'}}{\sqrt{\Delta_{i,n}^{Y'}}}\right),
\end{equation*}
where $T=1$, $N_T^X$ is the Poisson process with $\mathbb{E}[N_{T}^{X}]=1760$, and we let $\Delta_{i,n}^{X}=t^X_{i}-t^X_{i-1}$, $\Delta_i^nX=X_{t^X_{i}}-X_{t^X_{i-1}}$, and $\Delta_i^nY'=Y_{t^Y_{i+}}-Y_{t^Y_{i-}}$, with
$$t_{i-}^{Y}=\max_j\{t_{j}^{Y}: t_{j}^{Y}\leq t_{i-1}^{X}\},~t_{i+}^{Y}=\min_j\{t_{j}^{Y}: t_{j}^{Y}\geq t_{i}^{X}\}, \Delta_{i,n}^{Y'}=t_{i+}^{Y'}-t_{i-}^{Y'}.$$
Mathematically, $\Delta_{i,n}^{Y'}$ is the smallest cover of $\Delta_{i,n}^{X}$ in process $Y$ and  $\Delta_{i}^{n}{Y'}$ is the corresponding increment. The volatility and price processes settings are the same as Example \ref{ex1}, and the results of two other Example settings are similar. The performance of the adjusted estimator under $u=3.5$, $v=3.75$ are displayed in Table \ref{table6} and Figure \ref{figure3}, respectively.
	\begin{table}[htb!]
		\centering
		\caption{Monte Carlo Results of $U_n^{'}(u,v)$}
		\label{table6}
\resizebox{1.0\columnwidth}{!}{
		\begin{tabular}{cc|ccccccccc}
			\hline
			 & & \multicolumn{3}{c}{$u=2.5$} & \multicolumn{3}{c}{$u=3.5$} & \multicolumn{3}{c}{$u=4.5$} \\
			 & & \text{Bias} &\text{SD} &\text{MSE} &\text{Bias} & \text{SD} & \text{MSE} & \text{Bias}&\text{SD} &\text{MSE} \\
			\hline
			\multicolumn{11}{c}{\textbf{Example 1 }} \\
			\multirow{3}{*}{$v$} & 2.75 & 0.00086 &	0.02396 & 0.00057 &-0.00059	& 0.02363 & 0.00056 & -0.00092	& 0.02368 & 0.00056\\
			& 3.75 & -0.00015 & 0.02383	& 0.00057 & -0.00081& 0.02429 & 0.00059	& -0.00085	& 0.02336 & 0.00055\\
			& 4.75 & -0.00012 &0.02322	&0.00055  &	-0.00030& 0.02365 & 0.00056	& 0.00010	& 0.02396 & 0.00057\\
			\hline
	\end{tabular}
}
	\end{table}
\begin{figure}[!htbp]
		\centering
\includegraphics[width=7.5cm,height=8cm]{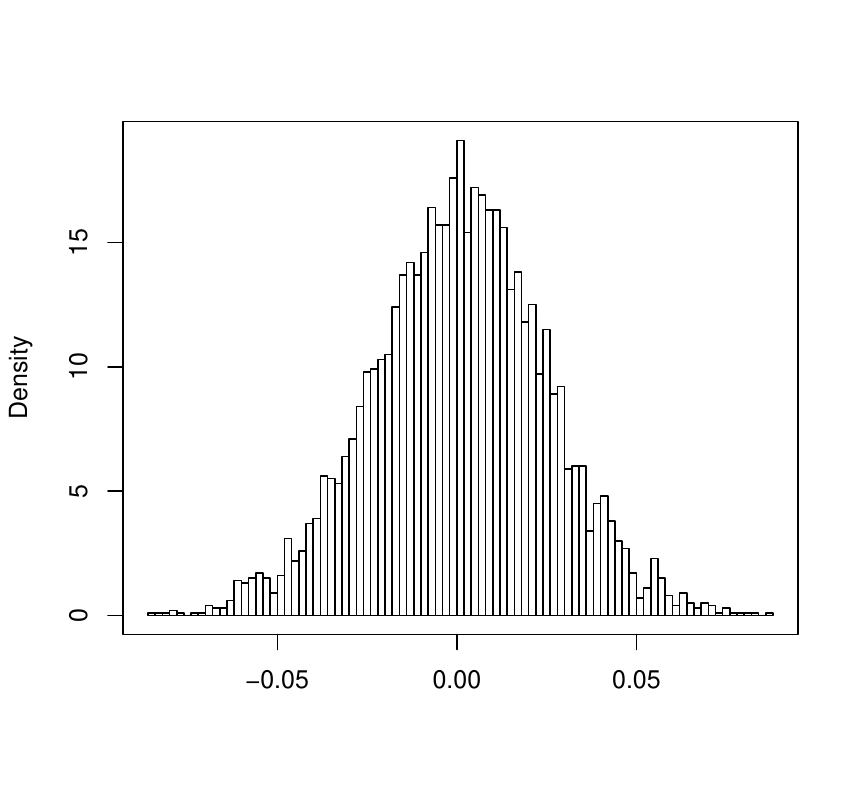}
\caption{The histograms of $U_{n}'(u,v)-\int_0^1\mbox{e}^{-\langle (u, v), ((\sigma_{s}^{X})^2, (\sigma_{s}^{Y})^2)\rangle}\mathrm{d}s$.}
\label{figure3}
\end{figure}

Table \ref{table6} presents the bias of adjusted estimator $U_{n}^{'}$. We see that they are also very close to zero but mostly larger than $U_{n}$, which demonstrates estimator $U_{n}^{'}$ can be used for non-synchronous data in practice, as MSE and standard deviation perform well. In addition, the histogram of $U_{n}'(u,v)-\int_0^1\mbox{e}^{-\langle (u, v), ((\sigma_{s}^{X})^2, (\sigma_{s}^{Y})^2)\rangle}\mathrm{d}s$ is very close to the normal distribution, as seen from Figure \ref{figure3}.

\subsection{Test for the dependence of volatility processes}\label{application}
Based on the above results, we test the following hypothesis:
$$
H_{0}:\sigma_{s}^{x}\ and\ \sigma_{s}^{y}\ are\ independent \quad v.s. \quad H_{1}:\sigma_{s}^{x}\ and\ \sigma_{s}^{y}\ are\ dependent.
$$

We use the test statistic $||\hat{\cal S}_{T,n}||^2$ in Corollary \ref{cor1}.
%$${\cal{\hat{S}}}_{T,n}(u,v)=\sqrt{T}\left(\frac{1}{T}\sum_{t=1}^{T}\hat{Z}_{t}^{x,y}(u,v)-\frac{1}{T}\sum_{t=1}^{T}\hat{Z}_{t}^{x}(u)\frac{1}{T}\sum_{t=1}^{T}\hat{Z}_{t}^{y}(v)\right)$$
To approximate the limiting distribution $||{\boldsymbol{\gamma}} \cdot  {\boldsymbol \Phi}(u,v)||^2$, we first partition the rectangle $[0,1]\times[0,1]$ into 100 same sub-rectangle, and let $u_{i}=i/10,v_{j}=j/10$, $i,j=1,2...10$. Then, we approximate the limit distribution of the test statistic by
 \begin{equation}\label{eq6}
\int_{0}^{1}\int_{0}^{1}\left[{\boldsymbol{\gamma}}(u,v) \cdot  {\boldsymbol \Phi}(u,v)\right]^{2}{\rm{d}}u{\rm{d}}v\approx \frac{1}{100}\sum_{i=1}^{10}\sum_{j=1}^{10}\left[{\boldsymbol{\gamma}}(u_{i},v_{j}) \cdot  {\boldsymbol \Phi}(u_{i},v_{j})\right]^{2}\stackrel{d}{=}\frac{1}{100}\sum_{i=1}^{100}\hat{\pi}_{i}\chi_{i}^{2}.
 \end{equation}
 where $({\boldsymbol{\gamma}}(u_{1},v_{1}) \cdot  {\boldsymbol \Phi}(u_{1},v_{1}),\cdots,{\boldsymbol{\gamma}}(u_{10},v_{10}) \cdot  {\boldsymbol \Phi}(u_{10},v_{10}))$ is, conditionally on $\cal{F}$, a one-hundred dimensional normal distribution with conditional covariance matrix $\hat{C}((u_{i},v_{j}),(u_{i'},v_{j'}))$ where $1\leq i,j,i',j'\leq 10$ and $\chi_{i}^{2}$ are independent and $\chi_{1}^{2}$ distributed, which are defined on an extension of the original probability space and independent from  $\cal{F}$, the $\hat{\pi}_{i}'s$ are the eigenvalues of the matrix $\hat{C}((u_{i},v_{j}),(u_{i'},v_{j'}))$. However, the eigenvalues of the covariance matrix may have a small number of negative values in practice. If this happens, we will discard the negative eigenvalues and only keep the positive in the above Equation (\ref{eq6}). And also the test statistic $||\hat{\cal S}_{T,n}||^2$ can be approximately by Riemann sum similarly on the rectangle $[0,1]\times[0,1]$. The test procedure is demonstrated with the following simulated processes.
\begin{ex}\label{ex4}
We use the data-generating processes of Example \ref{ex1} with the stationary volatility processes. Now, we generate $\tau_{t}^{x}$ and $\tau_{t}^{y}$ from a discrete auto-regressive processes. That is, we consider
 $$\tau_{t}^{x}=0.5\tau_{t-1}^{x}+\epsilon_{t}^{x},~~\tau_{t}^{y}=0.7\tau_{t-1}^{y}+\epsilon_{t}^{y},~{\text with}~ \epsilon_{t}^{y}=\rho_{t}'\epsilon_{t}^{x}+\sqrt{1-\rho_{t}'^{2}}\epsilon_{t}^{\star},$$
where, $\epsilon_{t}^{x}$ and $\epsilon_{t}^{\star}$ are two mutually independent standard normal random variables. It follows that $\epsilon_{t}^{y}$ is independent of $\epsilon_{t}^{x}$ if $\rho_{t}'=0$, otherwise they are dependent.
\end{ex}
Under the null hypothesis $H_{0}$, we let $u=0.5$, $v=0.5$, and consider the following four scenarios of the $T$ and $\Delta_n$, the size and power of the test are exhibited in Table \ref{table5},
\begin{itemize}
\item $T=22$(one month), $\Delta_{n}=1/390$,
\item $T=44$(two months), $\Delta_{n}=1/390$,
\item $T=44$(two months), $\Delta_{n}=1/780$,
\item $T=66$(one quarter), $\Delta_{n}=1/780$.
\end{itemize}

From Table \ref{table5}, we see that when the number of days $T$ increases and the increment $\Delta_{n}$ decreases, the power of the test becomes closer to 1, which shows that the performance of the test is satisfactory and the simulation studies are line with our asymptotic results.
\begin{table}[htb!]
		\centering
{
		\caption{The performance of test statistic}\label{table5}
		\vspace{0.2cm}
\begin{tabular}{r|cccc|cccc}
\toprule  %添加表格头部粗线
 &\multicolumn{4}{c}{$T, n, \alpha=0.05$}& \multicolumn{4}{c}{$T, n, \alpha=0.10$}\\
$\rho'$& $22, 390$ &44, 390 & 44, 780 &66, 780& $22, 390$ &44, 390 &44, 780 &66, 780\\
\midrule  %添加表格中横线
0   &0.06   &0.04     & 0.04   & 0.06     &0.10  &0.12 &0.11  & 0.08  \\
0.2 & 0.26    & 0.22   &0.45   &0.72  &0.32  &0.36  &0.66  &0.80 \\
0.5 & 0.47    & 0.52   &0.91   &0.96  &0.63  &0.74  &0.96  &0.99 \\
-0.5 & 0.51    & 0.59   &0.89  &0.99 &0.63  &0.74  &0.94  &0.99 \\
0.8 & 0.65    & 0.82   &0.99   &1.00 &0.75  &0.92  &1.00  &1.00 \\
\bottomrule %添加表格底部粗线
\end{tabular}
}
\end{table}

\section{Empirical study}\label{empirical}
%\textcolor{blue}{
In this section, we implement our proposed test by using the high-frequency data consisting of several stocks of the S\&P 500 index (INX). The period for the data used is from October 1-31, 2019, and the sampling frequency is one minute. We selected five stocks from each of the following four sectors:
\begin{itemize}
\item Information technology,
\item Finance,
\item Consumer staples,
\item Real estate.
\end{itemize}
The symbol of chosen stocks is shown in the Table \ref{table10}. For the convenience of calculation, we rescale the return by multiplying by fifteen so that the resulting spot volatility is not affected by the approximation of the cosine function while the correlation between volatilities remains unchanged.  We utilize the test statistic introduced in Section \ref{longspan} and compute the $p$-values. The results are presented in Table \ref{table10}.
%}

%\textcolor{blue}{
From Table \ref{table10}, we see at the nominal level $\alpha$ as 0.05, the null hypotheses of about 75\% pairs have been rejected. Namely, about 3 quarters of the stocks are dependent on each other among these 20 stocks. Furthermore, the dependence patterns within sectors and between different sectors are also different. For example, the volatilities of stocks in the sector Consumer Staples are more dependent on other sectors, showing the smallest averaged $p$-values. This phenomenon is probably due to the high market capitalizations of these companies and their significant connections to the daily lives of individuals. The volatilities of stocks in the Real Estate sector are relatively independent since, on average, the $p$-values are the largest. Since the lower stock market capitalization of this sector, especially SPG, its volatility is almost independent of all other stocks. Finally, the volatilities of all the giant stocks (APPL, MSFT, JPM, KO, WMT) are highly dependent on others regardless of whether they are in the same sector or different. These findings are generally aligned with our intuition.
\begin{table}[htb!]
		\centering
		\caption{$p$-value of the independence test for volatilities}
		\label{table10}
\resizebox{1.0\columnwidth}{!}{
		\begin{tabular}{c|ccccc|ccccc|ccccc|ccccc}
			\toprule[1.5pt]
Sector & \multicolumn{5}{c}{Information Technology} & \multicolumn{5}{c}{Finance}&\multicolumn{5}{c}{Consumer Staples} &\multicolumn{5}{c}{Real Estate}\\
\midrule[1.2pt]
Symbol & \text{APPL} &\text{MSFT} &\text{ORCL} &\text{IBM} & \text{ADBE} & \text{BAC} & \text{JPM}&\text{MS} &\text{WFC} &\text{PNC} &\text{KO} &\text{WMT} &\text{ADM} &\text{COST} &\text{KMB} &\text{O} &\text{SPG} &\text{KIM} &\text{CCL} &\text{WY}\\
			\midrule[1.2pt]
\text{APPL}&&0.02&0.03&0.01&$<$0.01&$<$0.01&$<$0.01&0.02&$<$0.01&$<$0.01&$<$0.01&$<$0.01&$<$0.01&0.01&0.05&0.08&0.14&0.02&0.03& 0.04\\
\text{MSFT}&&&0.02 &0.03& 0.11& 0.02  & $<$0.01  & 0.01  & 0.03  & 0.04  &$<$0.01 & $<$0.01 &$<$0.01&0.01& 0.30 & $<$0.01& 0.11& 0.01&$<$0.01 & 0.03\\
\text{ORCL}&&&&	0.01 & 0.03  & 0.03  & 0.02  & 0.03  & $<$0.01  & 0.15  &$<$0.01& $<$0.01 &$<$0.01	&0.01 &$<$0.01 & 0.03  & 0.25  & 0.17 &0.03 &0.04 \\
\text{IBM}&&&&& 0.01 &0.18&$<$0.01&0.10&0.24&0.32&$<$0.01&$<$0.01&$<$0.01&$<$0.01&0.04&$<$0.01&0.16 &0.01 & 0.04  &0.03  \\
\text{ADBE}&&&&&&0.17& 0.01& 0.04 &0.32& 0.13&0.03&0.03&$<$0.01&0.04&0.22&0.11&0.43&0.08 &0.03&$<$0.01 \\
\hline
\text{BAC}&&&&&&&$<$0.01&0.08&0.18&0.04&$<$0.01&0.01&$<$0.01&$<$0.01&$<$0.01&0.01&0.11&$<$0.01&0.01&0.01\\
\text{JPM}&&&&&&&&0.04&0.21&$<$0.01&$<$0.01&$<$0.01&0.01&$<$0.01&$<$0.01&$<$0.01&0.02&$<$0.01 &$<$0.01&0.02 \\
\text{MS} &&&&&&&&&0.41&0.52&$<$0.01&$<$0.01&$<$0.01&0.03&0.14&0.01&0.20&$<$0.01&0.30&$<$0.01\\
\text{WFC}&&&&&&&&&&0.02&$<$0.01&$<$0.01   &$<$0.01	 & $<$0.01  &0.01 &$<$0.01&0.15&$<$0.01&0.25&$<$0.01 \\
\text{PNC}&&&&&&&&&&&$<$0.01&$<$0.01 &$<$0.01&0.02&0.39&$<$0.01&0.11&$<$0.01&0.34&0.02 \\
\hline
\text{KO} &&&&&&&&&&&&$<$0.01 &$<$0.01&$<$0.01 &$<$0.01&$<$0.01&0.15&0.01&0.01&0.01 \\
\text{WMT}&&&&&&&&&&&&&$<$0.01&0.03&0.45&$<$0.01&0.04&$<$0.01 &0.02&0.01\\
\text{ADM}&&&&&&&&&&&&&&0.02 & $<$0.01&$<$0.01&0.03&0.01&$<$0.01 &0.01 \\
\text{COST}&&&&&&&&&&&&&&&0.55&0.33&0.70&0.18&0.04&0.45 \\
\text{KMB} &&&&&&&&&&&&&&&&$<$0.01& 0.60&$<$0.01 &0.09&$<$0.01\\
\hline
\text{O}   &&&&&&&&&&&&&&&&&0.47&0.03&0.03&$<$0.01\\
\text{SPG} &&&&&&&&&&&&&&&&&&0.04&0.78&0.13 \\
\text{KIM} & & & & & & & & & & & & & & & & & & &0.16 &0.20\\
\text{CCL} & & & & & & & & & & & & & & & & & & & &0.55 \\
\text{WY}  & & & & & & & & & & & & & & & & & & & &   \\
\bottomrule[1.5pt]
	\end{tabular}
}
	\end{table}

\section{Conclusion}\label{conclusion}
In this paper, we study the limiting behavior of the realized joint Laplace transform of volatilities by using non-overlapped and overlapped high-frequency data, including consistency and asymptotic normality. Moreover, we derive a central limit theorem for the joint Laplace transform under the assumption of the stationary volatility processes. Based on the results, we propose a consistent test for the dependence between two volatility processes. Extensive simulation studies verify the finite sample performance of the proposed theory. We implement the test procedure using a real high-frequency data set.

This paper highlights some future research. For example, we may
%study the joint realized Laplace transform of volatilities processes of pure-jump semi-martingales, referring \ to \cite{todorov2012realizedb},
develop large sample theory for asynchronous high-frequency data, study the related limiting behavior and independence tests for the multivariate volatilities, and consider a more general dependence structure of price processes, such as nonlinear dependence, etc.

\section{Appendix: proofs}\label{appendix} This section contains all the proofs. Throughout the proofs, we denote by $C$ a generic constant that may change from line to line. We sometimes emphasize its dependence on some parameter $p$ by writing $C_p$. As is typical in this kind of problem, by a standard localization procedure as proved in \cite{jacod2012discretization}.
	% Assumptions A, C, and D can be strengthened into the following stronger versions without loss of generality.
	
We denote
	\begin{equation*}
		\alpha_i^Z:=\alpha_{t_i^n}^Z,~~\text{for}~\alpha=b, \tilde{b}, v, v', ~Z=X, Y,~~{\cal G}_i={\cal F}_{t_{2i}^n}, ~i\geq 0.
	\end{equation*}
	Let
	\begin{align*}
		\xi_{i}^{(1)}(u,v)=&2\Delta_n^{\frac12}\left\{\cos\left(\frac{\sqrt{2u}\Delta_i^n{X}+\sqrt{2v}\Delta_{i+1}^n{Y}}{\sqrt{\Delta_n}}\right)- \cos\left(\frac{\sqrt{2u}\sigma_{i-1}^X\Delta_i^n{W^X}+\sqrt{2v}\sigma_{i-1}^Y\Delta_{i+1}^n{W^Y}}{\sqrt{\Delta_n}}\right)\right\},\\
		\xi_{i}^{(2)}(u,v)=&2\Delta_n^{\frac12}\left\{\cos\left(\frac{\sqrt{2u}\sigma_{i-1}^X\Delta_i^n{W^X}+\sqrt{2v}\sigma_{i-1}^Y\Delta_{i+1}^n{W^Y}}{\sqrt{\Delta_n}}\right)-\mbox{e}^{-\langle (u, v), ((\sigma_{i-1}^X)^2, (\sigma_{i-1}^Y)^2)\rangle}\right\},\\
		\xi_{i}^{(3)}(u,v)=&\Delta_n^{-\frac12}\left\{2\Delta_n\mbox{e}^{-\langle (u, v), ((\sigma_{i-1}^X)^2, (\sigma_{i-1}^Y)^2)\rangle}-\int_{t_{i-1}^n}^{t_{i+1}^n}\mbox{e}^{-\langle (u, v),((\sigma_s^X)^2, (\sigma_s^Y)^2)\rangle}\mathrm{d}s\right\}.
	\end{align*}
	For every $t$, $u$ and $v$, we can rewrite the left side of \eqref{equ3} as
	\begin{equation*}
		\frac{1}{\sqrt{\Delta_n}}\left(V_n(u,v)-\int_0^T\mbox{e}^{-\langle (u, v), ((\sigma_{s}^{X})^2, (\sigma_{s}^{Y})^2)\rangle}\mathrm{d}s\right)=\sum_{i=1}^{\lfloor n/2\rfloor}\sum_{j=1}^{3}\xi_{2i-1}^{(j)}(u,v).
	\end{equation*}
	%$\sum_{t=1}^{T}\sum_{i=\lfloor (t-1)/2\Delta_{n}\rfloor+1}^{\lfloor t/2\Delta_{n}\rfloor}\sum_{j=1}^{3}\xi_{2i-1}^{(j)}(u,v)$.
	
	Before giving the proofs of the main results, we first present the following lemma for the approximation error.
	\begin{lem}\label{lemma1}
		Let Assumptions \ref{asu1}-\ref{asu2} hold, we have
		\begin{equation*}
			\sum_{i=1}^{\lfloor n/2\rfloor}\xi_{2i-1}^{(1)}(u,v)\stackrel{P}{\longrightarrow}0, ~~
			\sum_{i=1}^{\lfloor n/2\rfloor}\xi_{2i-1}^{(3)}(u,v)\stackrel{P}{\longrightarrow}0.\label{xi1}
		\end{equation*}
	\end{lem}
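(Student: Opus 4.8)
The plan is to treat the two error arrays separately, in each case peeling off a leading term that is killed by a parity/cancellation argument and bounding the remainder crudely. I work under the localization already invoked in the preamble, so that the coefficients in Assumptions \ref{asu1}--\ref{asu2} are bounded and the moment bounds are uniform in $i$. The organizing device is the block filtration $\mathcal{G}_i=\mathcal{F}_{t_{2i}^n}$: since we sum only over odd indices $2i-1$, each summand $\xi_{2i-1}^{(j)}$ is built on the disjoint block $[t_{2i-2}^n,t_{2i}^n]$, hence is $\mathcal{G}_i$-measurable, and I will verify negligibility through the standard array criterion (see \cite{jacod2012discretization}): if $\chi_i$ is $\mathcal{G}_i$-measurable with $\sum_i\mathbb{E}[\chi_i\mid\mathcal{G}_{i-1}]\stackrel{P}{\longrightarrow}0$ and $\sum_i\mathbb{E}[\chi_i^2\mid\mathcal{G}_{i-1}]\stackrel{P}{\longrightarrow}0$, then $\sum_i\chi_i\stackrel{P}{\longrightarrow}0$.

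For $\sum_i\xi_{2i-1}^{(1)}$, abbreviate the clean argument $A_i=\Delta_n^{-1/2}(\sqrt{2u}\,\sigma_{2i-2}^X\Delta_{2i-1}^nW^X+\sqrt{2v}\,\sigma_{2i-2}^Y\Delta_{2i}^nW^Y)$ and the scaled increment error $\varepsilon_i=\Delta_n^{-1/2}(\sqrt{2u}\,R_i^X+\sqrt{2v}\,R_i^Y)$, where $R_i^X=\Delta_{2i-1}^nX-\sigma_{2i-2}^X\Delta_{2i-1}^nW^X$ and $R_i^Y=\Delta_{2i}^nY-\sigma_{2i-2}^Y\Delta_{2i}^nW^Y$; each $R$ decomposes into a drift, a continuous volatility-fluctuation, and a jump piece. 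A second-order Taylor expansion gives $\cos(A_i+\varepsilon_i)-\cos A_i=-\sin(A_i)\varepsilon_i-\tfrac12\cos(\widetilde A_i)\varepsilon_i^2$. The quadratic remainder is disposed of directly: for the drift and continuous pieces $\mathbb{E}[\varepsilon_i^2]=O(\Delta_n)$, so the term $\Delta_n^{1/2}\varepsilon_i^2$ has expectation $O(\Delta_n^{3/2})$ and, summed over the $O(\Delta_n^{-1})$ blocks, contributes $O(\Delta_n^{1/2})\to0$. The jump piece is handled without expanding, using $|\cos a-\cos b|\le\min(2,|a-b|)\le 2|a-b|^p$ with $p\in(\beta,1)$: by Assumption \ref{asu1} its contribution is at most $4\Delta_n^{(1-p)/2}\,|R_i^{\mathrm{jump}}|^p$ in expectation $O(\Delta_n^{(3-p)/2})$ per block, summing to $O(\Delta_n^{(1-p)/2})\to0$.

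The genuinely delicate term is the linear one, $\chi_i=-2\Delta_n^{1/2}\sin(A_i)\,\varepsilon_i^{\mathrm{drift}+\mathrm{cont}}$, whose per-block $L^1$ size is only $O(\Delta_n)$, so that the naive sum is $O(1)$ and the cancellation must be made explicit. The variance side of the array criterion is routine, since $\mathbb{E}[\chi_i^2\mid\mathcal{G}_{i-1}]\le4\Delta_n\,\mathbb{E}[(\varepsilon_i^{\mathrm{drift}+\mathrm{cont}})^2\mid\mathcal{G}_{i-1}]=O(\Delta_n^2)$, which sums to $O(\Delta_n)\to0$. The bias side $\sum_i\mathbb{E}[\chi_i\mid\mathcal{G}_{i-1}]$ is the main obstacle, and I expect to resolve it by a parity argument: conditionally on $\mathcal{G}_{i-1}$ the argument $A_i$ is a centered, symmetric Gaussian, so because $\sin$ is odd every part of $\varepsilon_i$ that is $\mathcal{G}_{i-1}$-measurable or an even function of the block's Brownian increments produces exactly zero conditional mean. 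In particular, both the leading drift contribution (carrying the $\mathcal{G}_{i-1}$-measurable factor $b_{2i-2}\Delta_n$) and, by It\^o's formula, the leading continuous contribution $\tfrac12 v_{2i-2}^X((\Delta_{2i-1}^nW^X)^2-\Delta_n)$ arising from the leverage dependence of $\sigma^X$ on $W^X$ are annihilated. The surviving odd cross-terms are estimated by Gaussian integration by parts (Stein's identity), producing a covariance factor $\Delta_n^{-1/2}\int_{t_{2i-2}^n}^{t_{2i-1}^n}(\sigma_s^X-\sigma_{2i-2}^X)\,\mathrm{d}s$ whose conditional mean is $O(\Delta_n^{3/2})$ by the $|t-s|$-modulus of Assumption \ref{asu2}; likewise the node-replacement error from evaluating the $Y$-coefficients at the interior node $t_{2i-1}^n$ rather than at $t_{2i-2}^n$ is $O(\Delta_n^{3/2})$ per block. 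Summed, these residual biases are $O(\Delta_n^{1/2})\to0$.

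For $\sum_i\xi_{2i-1}^{(3)}$, observe that $2\Delta_n$ is exactly the length of $[t_{2i-2}^n,t_{2i}^n]$, so $\xi_{2i-1}^{(3)}=\Delta_n^{-1/2}\int_{t_{2i-2}^n}^{t_{2i}^n}(g_{2i-2}-g_s)\,\mathrm{d}s$ with $g_s=\mathrm{e}^{-\langle(u,v),((\sigma_s^X)^2,(\sigma_s^Y)^2)\rangle}$. Applying It\^o's formula to $g_s-g_{2i-2}$ splits this increment into a finite-variation part of size $O(s-t_{2i-2}^n)$, which integrates and scales to $O(\Delta_n^{3/2})$ per block and sums to $O(\Delta_n^{1/2})$, and a martingale part started at $t_{2i-2}^n$. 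The martingale part is again controlled by the array criterion: its conditional mean vanishes because it is a stochastic integral against Brownian motion initialized at the left endpoint of the block, and its conditional second moment, after the $\Delta_n^{-1/2}$ scaling and time integration, is $O(\Delta_n^2)$ per block, summing to $O(\Delta_n)\to0$. Together these estimates give both convergences claimed in the lemma.
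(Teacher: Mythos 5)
Your proposal is correct and follows essentially the same route as the paper's proof: both decompose each error array into a conditionally centered part — killed by the oddness of $\sin$ against the symmetric conditionally Gaussian block increments and by the $(\Delta W)^2-\Delta_n$ identity, with per-block conditional variance $O(\Delta_n^2)$ — plus remainders of per-block $L^1$ size $O(\Delta_n^{3/2})$ for the drift/coefficient-fluctuation/time-discretization pieces and $O(\Delta_n^{(3-p)/2})$, $p\in(\beta,1)$, for the jumps, exactly matching the paper's terms $\xi^{(1,1)}$ through $\xi^{(1,5,2)}$ and $\xi^{(3,1)}$ through $\xi^{(3,3)}$. The only differences are in bookkeeping (you apply It\^o's formula to $\mathrm{e}^{-u(\sigma_s^X)^2-v(\sigma_s^Y)^2}$ where the paper Taylor-expands in $\sigma$ around the left endpoint, and you invoke Gaussian integration by parts where the paper bounds the surviving coefficient-fluctuation terms directly in $L^1$/$L^2$), and all your claimed orders agree with the paper's.
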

	\begin{pf}
	First for $\sum_{i=1}^{\lfloor n/2\rfloor}\xi_{2i-1}^{(3)}(u,v)$, note that $\xi_{2i-1}^{(3)}(u,v)$ can be written as $\sum_{j=1}^{3}\xi_{2i-1}^{(3,j)}(u,v)$, with
	\begin{align*}
		% \nonumber % Remove numbering (before each equation)
		\xi_{2i-1}^{(3,1)}(u,v)=&\Delta_n^{-\frac12}\int_{t_{2i-2}^n}^{t_{2i}^n}\left\langle k(\sigma_{\ast}^{X},\sigma_{\ast}^{Y},u,v)-k(\sigma_{2i-2}^{X},\sigma_{2i-2}^{Y},u,v),
		(\sigma_{2i-2}^{X}-\hat{\sigma}_{s}^{X},\sigma_{2i-2}^{Y}-\hat{\sigma}_{s}^{Y})\right\rangle\mathrm{d}s,\\
		\xi_{2i-1}^{(3,2)}(u,v)=&\Delta_n^{-\frac12}\int_{t_{2i-2}^n}^{t_{2i}^n}\left\langle k(\sigma_{2i-2}^{X},\sigma_{2i-2}^{Y},u,v),  (\sigma_{2i-2}^{X}-\hat{\sigma}_{s}^{X},\sigma_{2i-2}^{Y}-\hat{\sigma}_{s}^{Y})\right\rangle\mathrm{d}s,\\
		\xi_{2i-1}^{(3,3)}(u,v)=&\Delta_n^{-\frac12}\int_{t_{2i-2}^n}^{t_{2i}^n}\mbox{e}^{\langle (u,v), ((\hat{\sigma}_{s}^{X})^{2},(\hat{\sigma}_{s}^{Y})^{2})\rangle}-\mbox{e}^{\langle (u,v), ((\sigma_{s}^{X})^{2},(\sigma_{s}^{Y})^{2})\rangle}\mathrm{d}s,
	\end{align*}
	where $k(x,y,u,v)=(-2ux\mbox{e}^{-ux^2-vy^2},-2vy\mbox{e}^{-ux^2-vy^2})$, $\sigma_{\ast}^{Z}$ is a number between $\sigma_{2i-2}^{Z}$ and $\hat{\sigma}_{s}^{Z}$, $ s\in[t_{2i-2}^{n},t_{2i}^{n}]$, $Z=X,Y$, and
	\begin{equation*}
		% \nonumber % Remove numbering (before each equation)
		\hat{\sigma}_{s}^{Z}=\sigma_{2i-2}^{Z}+\int_{t_{2i-2}^n}^{s}v_{r}^{Z}\mathrm{d}W_{r}^{Z}+\int_{t_{2i-2}^n}^{s}v_{r}^{'Z}\mathrm{d}W_{r}^{'Z}+\int_{t_{2i-2}^n}^{s}\int_{\mathbb{R}}\delta^{'Z}(r-,z)\tilde{\mu}^{'Z}(\mathrm{d}r,\mathrm{d}z).
		%\hat{\sigma}_{s}^{Y}&=&\sigma_{2i-2}^{Y}+\int_{t_{2i-2}^n}^{s}\nu_{u}^{Y}dW_{u}^{Y}+\int_{t_{2i-2}^n}^{s}\nu_{u}^{'y}dW_{u}^{'Y}+\int_{t_{2i-2}^n}^{s}\int_{\mathbb{R}}\delta^{'Y}(s-,y)\tilde{u}^{'Y}(ds,dy),
	\end{equation*}
	We denote the Euclidean space $\mathbb{R}^2$ with the standard norm $\|(x_1, x_2)\|:=\sqrt{x_1^2+x_2^2}$, note that $\|k(x,y,u,v)\|\leq C_{u,v}$ and $\|k(x_{1},y_{1},u,v)-k(x_{2},y_{2},u,v)\|\leq C_{u,v}\sqrt{(x_{1}-x_{2})^{2}+(y_{1}-y_{2})^{2}}$, for $x,y,x_{1},x_{2},y_{1},y_{2}\in\mathbb{R}$ and $u,v\geq 0$.
	For $\xi_{2i-1}^{(3,1)}(u,v)$, we have
	\begin{align}\label{xi31part}
		\mathbb{E}\left[\big|\xi_{2i-1}^{(3,1)}(u,v)\big|\right]
		\leq&\Delta_n^{-\frac12}\mathbb{E}\left[\sup\limits_{s}\left\|k(\sigma_{\ast}^{X},\sigma_{\ast}^{Y},u,v)-k(\sigma_{2i-2}^{X},\sigma_{2i-2}^{Y},u,v)\right\|\right.\nonumber\\\nonumber
		&\left.\times\int_{t_{2i-2}^n}^{t_{2i}^n}\left\|(\sigma_{2i-2}^{X}-\hat{\sigma}_{s}^{X},\sigma_{2i-2}^{Y}-\hat{\sigma}_{s}^{Y})\right\|\mathrm{d}s\right]\\\nonumber
%		&\leq&\Delta_n^{-\frac12}\sqrt{\mathbb{E}\left[\sup\limits_{s}\left\|k(\sigma_{\ast}^{X},\sigma_{\ast}^{Y},u,v)-k(\sigma_{{(2i-2)-}}^{X},\sigma_{{(2i-2)-}}^{Y},u,v)\right\|^2\right]}\\\nonumber
%		&&\times\sqrt{\mathbb{E}\left[\int_{t_{2i-2}^n}^{t_{2i}^n}\left\|(\sigma_{2i-2}^{X}-\hat{\sigma}_{s}^{X},\sigma_{2i-2}^{Y}-\hat{\sigma}_{s}^{Y})\right\|\mathrm{d}s\right]^2}\\\nonumber
		\leq&C_{u,v}\Delta_n^{-\frac12}\sqrt{\mathbb{E}\left[\sup\limits_{s}\left((\hat{\sigma}_s^{X}-\sigma_{2i-2}^{X})^2+(\hat{\sigma}_s^{Y}-\sigma_{2i-2}^{Y})^2\right)\right]}\\
		&\times\sqrt{2\Delta_n\mathbb{E}\left[\int_{t_{2i-2}^n}^{t_{2i}^n}\left\|(\sigma_{2i-2}^{X}-\hat{\sigma}_{s}^{X},\sigma_{2i-2}^{Y}-\hat{\sigma}_{s}^{Y})\right\|^2\mathrm{d}s\right]}.
	\end{align}
	By the Burkholder-Davis-Gundy inequality and Assumption \ref{asu2}, we have
	\begin{align}\label{supsigmax}
		&\mathbb{E}\left[\sup\limits_{s}(\hat{\sigma}_s^{X}-\sigma_{2i-2}^{X})^2\right]\nonumber\\
		\leq&C\mathbb{E}\left[\int_{t_{2i-2}^n}^{t_{2i}^n}(v_{r}^{X})^2\mathrm{d}r+\int_{t_{2i-2}^n}^{t_{2i}^n}(v_{r}^{'X})\mathrm{d}r+\int_{t_{2i-2}^n}^{t_{2i}^n}\int_{\mathbb{R}}\delta^{'X}(r-,x)\nu(\mathrm{d}x)\mathrm{d}r\right]\nonumber\\
		\leq&C\Delta_n,
	\end{align}
	and
	\begin{equation}\label{supsigmay}
		\mathbb{E}\left[\sup\limits_{s}(\hat{\sigma}_s^{Y}-\sigma_{2i-2}^{Y})^2\right]\leq C\Delta_n.
	\end{equation}
	Therefore, it follows from \eqref{xi31part}, \eqref{supsigmax} and \eqref{supsigmay} that
	\begin{equation}\label{estxi31}
		\mathbb{E}\left[\big|\xi_{2i-1}^{(3,1)}(u,v)\big|\right]\leq C_{u,v}\Delta_n^{3/2}.
	\end{equation}
	It is easy to check that
	\begin{equation}\label{estxi32}
		\mathbb{E}\left[\xi_{2i-1}^{(3,2)}(u,v)\big|\mathcal{G}_{i-1}\right]=0,
	\end{equation}	
%   By the Assumption \ref{asu2}, we have
%	\begin{eqnarray*}
%		&&\mathbb{E}\left[\xi_{2i-1}^{(3,2)}(u,v)|\mathcal{G}_{i-1}\right]\\
%		&=&-2u\sigma_{2i-2}^{X}\mbox{e}^{-\langle(u,v), (\sigma_{2i-2}^{X},\sigma_{2i-2}^{Y})\rangle}\Delta_n^{-\frac12}\mathbb{E}\left[\int_{t_{2i-2}^n}^{t_{2i}^n}(\sigma_{2i-2}^{X}-\hat{\sigma}_{s}^{X})\mathrm{d}s|\mathcal{G}_{i-1}\right]\\
%		&&-2u\sigma_{2i-2}^{Y}\mbox{e}^{-\langle(u,v), (\sigma_{2i-2}^{X},\sigma_{2i-2}^{Y})\rangle}\Delta_n^{-\frac12}\mathbb{E}\left[\int_{t_{2i-2}^n}^{t_{2i}^n}(\sigma_{2i-2}^{Y}-\hat{\sigma}_{s}^{Y})\mathrm{d}s|\mathcal{G}_{i-1}\right]\\
%		&=&0,
%	\end{eqnarray*}	
	and
	\begin{align}\label{est2xi32}
		\mathbb{E}\left[\big|\xi_{2i-1}^{(3,2)}(u,v)\big|^{2}\right]
		\leq&C_{u,v}\mathbb{E}\left[\int_{t_{2i-2}^n}^{t_{2i}^n}\left\|(\sigma_{2i-2}^{X}-\hat{\sigma}_{s}^{X},\sigma_{2i-2}^{Y}-\hat{\sigma}_{s}^{Y})\right\|^2\mathrm{d}s\right]\nonumber\\
		\leq&C_{u,v}\Delta_{n}^{2}.
	\end{align}
	Applying the first-order Taylor expansion for $\xi_{2i-1}^{(3,3)}(u,v)$, we obtain
	\begin{equation*}
		% \nonumber % Remove numbering (before each equation)
		\mathbb{E}\left[\big|\xi_{2i-1}^{(3,3)}(u,v)\big|\Big|{\cal{G}}_{i-1}\right]
		\leq C_{u,v}\Delta_n^{-\frac12}\mathbb{E}\left[\int_{t_{2i-2}^n}^{t_{2i}^n}\big|(\hat{\sigma}_{s}^{X})^{2}-({\sigma}_{s}^{X})^{2}\big|+\big|(\hat{\sigma}_{s}^{Y})^{2}-({\sigma}_{s}^{Y})^{2}\big|\Big|{\cal{G}}_{i-1}\right],
	\end{equation*}
	consequently, we have
	\begin{equation}\label{estxi33}
		% \nonumber % Remove numbering (before each equation)
		\mathbb{E}\left[\big|\xi_{2i-1}^{(3,3)}(u,v)\big|\right]
%		&\leq&C_{u,v}\Delta_n^{-\frac12}\mathbb{E}\left[\int_{t_{2i-2}^n}^{t_{2i}^n}\Big\{\int_{t_{2i-2}^n}^{s}2|\sigma_{s}^{X}||\tilde{b}_r^X|\mathrm{d}r+\int_{t_{2i-2}^n}^{s}|\tilde{b}_r^X|^2\mathrm{d}r\Big\}\right.\\
%		&&\left.+\Big\{\int_{t_{2i-2}^n}^{s}2|\sigma_{s}^{Y}||\tilde{b}_r^Y|\mathrm{d}r+\int_{t_{2i-2}^n}^{s}|\tilde{b}_r^Y|^2\mathrm{d}r\Big\}\right]\\
		\leq C_{u,v}\Delta_n^{3/2}.
	\end{equation}
%    Finally, it follows from \eqref{estxi31}, \eqref{estxi32}, \eqref{est2xi32} and \eqref{estxi33} that
%    \begin{equation}
%    \sum_{i=1}^{\lfloor n/2\rfloor}\xi_{2i-1}^{(3)}(u,v)\stackrel{P}{\longrightarrow}0.
%    \end{equation}
%

	Next, for $\sum_{i=1}^{\lfloor n/2\rfloor}\xi_{2i-1}^{(1)}(u,v)$, we can decompose $\xi_{2i-1}^{(1)}(u,v)$ as $\sum_{i=1}^{5}\xi_{2i-1}^{(1,i)}(u,v)$, with
	\begin{align*}
		% \nonumber % Remove numbering (before each equation)
		\xi_{2i-1}^{(1,1)}(u,v)=&-4\Delta_{n}^{1/2}{\rm{sin}}\left(\frac{1}{2\sqrt{\Delta_{n}}}\sqrt{2u}\Big(\Delta_{2i-1}^{n}X+\int_{t_{2i-2}^n}^{t_{2i-1}^n}b_{s}^{X}\mathrm{d}s+\int_{t_{2i-2}^n}^{t_{2i-1}^n}\sigma_{s}^{X}\mathrm{d}W_{s}^{X}\Big)\right.\\
		&+\left.\frac{1}{2\sqrt{\Delta_{n}}}\sqrt{2v}\Big(\Delta_{2i}^{n}Y+\int_{t_{2i-1}^n}^{t_{2i}^n}b_{s}^{Y}\mathrm{d}s+\int_{t_{2i-1}^n}^{t_{2i}^n}\sigma_{s}^{Y}\mathrm{d}W_{s}^{Y}\Big)\right)\\ &\times{\rm{sin}}\left(\frac{1}{2\sqrt{\Delta_{n}}}\sqrt{2u}\Big(\Delta_{2i-1}^{n}X-\int_{t_{2i-2}^n}^{t_{2i-1}^n}b_{s}^{X}\mathrm{d}s-\int_{t_{2i-2}^n}^{t_{2i-1}^n}\sigma_{s}^{X}\mathrm{d}W_{s}^{X}\Big)\right.\\
		&+\left.\frac{1}{2\sqrt{\Delta_{n}}}\sqrt{2v}\Big(\Delta_{2i}^{n}Y-\int_{t_{2i-1}^n}^{t_{2i}^n}b_{s}^{Y}\mathrm{d}s-\int_{t_{2i-1}^n}^{t_{2i}^n}\sigma_{s}^{Y}\mathrm{d}W_{s}^{Y}\Big)\right),\\
		\xi_{2i-1}^{(1,2)}(u,v)=&-2\sqrt{2u}\Delta_{n}{\rm{sin}}\left(\frac{\sqrt{2u}\sigma_{2i-2}^{X}\Delta_{2i-1}^{n}W^{X}+\sqrt{2v}\sigma_{2i-2}^{Y}\Delta_{2i}^{n}W^{Y}}{\sqrt{\Delta_{n}}}\right)b_{2i-2}^{X}\\
		&-2\sqrt{2v}\Delta_{n}{\rm{sin}}\left(\frac{\sqrt{2u}\sigma_{2i-2}^{X}\Delta_{2i-1}^{n}W^{X}+\sqrt{2v}\sigma_{2i-2}^{Y}\Delta_{2i}^{n}W^{Y}}{\sqrt{\Delta_{n}}}\right)b_{2i-1}^{Y},\\
		\xi_{2i-1}^{(1,3)}(u,v)=&-2{\rm{sin}}(\tilde{x}_{1}+\tilde{y}_{1})\left(\sqrt{2u}\int_{t_{2i-2}^n}^{t_{2i-1}^n}(b_{s}^{X}-b_{2i-2}^{X})\mathrm{d}s+\sqrt{2v}\int_{t_{2i-1}^n}^{t_{2i}^n}(b_{s}^{Y}-b_{2i-1}^{Y})\mathrm{d}s\right),\\		
		%&-2\Delta_{n}^{-1/2}v{\rm{cos}}(\tilde{x}_{4})\left(b_{(2i-1)}^{Y}\Delta_{n}+\int_{t_{2i-1}^n}^{t_{2i}^n}\sigma_{s}^{Y}-\sigma_{(2i-2)}^{Y}\mathrm{d}W_{s}^{Y}\right)^{2},\\
		\xi_{2i-1}^{(1,4)}(u,v)=&-2\Delta_{n}^{-1/2}{\rm{cos}}(\tilde{x}_{2}+\tilde{y}_{2})\left(\sqrt{u}b_{2i-2}^{X}\Delta_{n}+\sqrt{u}\int_{t_{2i-2}^n}^{t_{2i-1}^n}(\sigma_{s}^{X}-\sigma_{2i-2}^{X})\mathrm{d}W_{s}^{X}\right.\\
		&\left.+\sqrt{v}b_{2i-1}^{Y}\Delta_{n}+\sqrt{v}\int_{t_{2i-1}^n}^{t_{2i}^n}(\sigma_{s}^{Y}-\sigma_{2i-2}^{Y})\mathrm{d}W_{s}^{Y}\right)^{2},\\
		\xi_{2i-1}^{(1,5)}(u,v)=&-2\sqrt{2u}{\rm{sin}}\left(\frac{\sqrt{2u}\sigma_{2i-2}^{X}\Delta_{2i-1}^{n}W^{X}+\sqrt{2v}\sigma_{2i-2}^{Y}\Delta_{2i}^{n}W^{Y}}{\sqrt{\Delta_{n}}}\right)\int_{t_{2i-2}^n}^{t_{2i-1}^n}(\sigma_{s}^{X}-\sigma_{2i-2}^{X})\mathrm{d}W_{s}^{Y}\\
		&-2\sqrt{2v}{\rm{sin}}\left(\frac{\sqrt{2u}\sigma_{2i-2}^{X}\Delta_{2i-1}^{n}W^{X}+\sqrt{2v}\sigma_{2i-2}^{Y}\Delta_{2i}^{n}W^{Y}}{\sqrt{\Delta_{n}}}\right)\int_{t_{2i-1}^n}^{t_{2i}^n}(\sigma_{s}^{Y}-\sigma_{2i-2}^{Y})\mathrm{d}W_{s}^{Y},
	\end{align*}
	where $\tilde{x}_{1}$ is between $\sqrt{2u}\Delta_{n}^{-1/2}\int_{t_{2i-2}^n}^{t_{2i-1}^n}b_{s}^{X}\mathrm{d}s+\sqrt{2u}\Delta_{n}^{-1/2}\int_{t_{2i-2}^n}^{t_{2i-1}^n}\sigma_{s}^{X}\mathrm{d}W_{s}^{X}$ and $\sqrt{2u}\Delta_{n}^{-1/2}b_{2i-2}^{X}\Delta_{n}+\int_{t_{2i-2}^n}^{t_{2i-1}^n}\sigma_{s}^{X}\mathrm{d}W_{s}^{X}$; \\
	$\tilde{y}_{1}$ is between $\sqrt{2v}\Delta_{n}^{-1/2}\int_{t_{2i-1}^n}^{t_{2i}^n}b_{s}^{Y}\mathrm{d}s+\sqrt{2v}\Delta_{n}^{-1/2}\int_{t_{2i-1}^n}^{t_{2i}^n}\sigma_{s}^{Y}\mathrm{d}W_{s}^{Y}$ and $\sqrt{2v}\Delta_{n}^{-1/2}b_{2i-1}^{Y}\Delta_{n}+\int_{t_{2i-1}^n}^{t_{2i}^n}\sigma_{s}^{Y}\mathrm{d}W_{s}^{Y}$;\\
	$\tilde{x}_{2}$ is between  $\sqrt{2u}\Delta_{n}^{-1/2}b_{2i-2}^{X}\Delta_{n}+\int_{t_{2i-2}^n}^{t_{2i-1}^n}\sigma_{s}^{X}\mathrm{d}W_{s}^{X}$
	and $\sqrt{2u}\Delta_{n}^{-1/2}\sigma_{2i-2}^{X}\Delta_{2i-1}^{n}W^{X}$;\\ $\tilde{y}_{2}$ is between
	$\sqrt{2v}\Delta_{n}^{-1/2}b_{2i-1}^{Y}\Delta_{n}+\int_{t_{2i-1}^n}^{t_{2i}^n}\sigma_{s}^{Y}\mathrm{d}W_{s}^{Y}$ and
	$\sqrt{2v}\Delta_{n}^{-1/2}\sigma_{2i-2}^{Y}\Delta_{2i}^{n}W^{Y}$.
	\\
	Following the inequalities $|{\rm{sin}}(A+B)|\leq|{\rm{sin}}(A)|+|{\rm{sin}}(B)|$, $|{\rm{sin}}(x)|\leq|x|$ and $|\sum_{i}|a_{i}||^{p}\leq\sum_{i}|a_{i}|^{p}$ for $0<p\leq1$, we have
	\begin{align}\label{estxi11}
		% \nonumber % Remove numbering (before each equation)
		\mathbb{E}\left[\big|\xi_{2i-1}^{(1,1)}(u,v)\big|\right]\leq& C_{u,v}\Delta_{n}^{1/2-\beta/2-\iota/2}\left\{\mathbb{E}\left|\int_{t_{2i-2}^n}^{t_{2i-1}^n}\int_{{\mathbb{R}}}\delta^{X}(s-,x)\tilde{\mu}^{X}(\mathrm{d}s,\mathrm{d}x)\right|^{\beta+\iota}\right.\nonumber\\
		&+\left.\mathbb{E}\left|\int_{t_{2i-1}^n}^{t_{2i}^n}\int_{{\mathbb{R}}}\delta^{Y}(s-,y)\tilde{\mu}^{Y}(\mathrm{d}s,\mathrm{d}y)\right|^{\beta+\iota}\right\}\nonumber\\
		\leq& C_{u,v}\Delta_n^{3/2-\beta/2-\iota/2},
	\end{align}
	where $\iota\in(0,1-\beta)$.
	Applying the same techniques, we have
	\begin{align}
		% \nonumber % Remove numbering (before each equation)
		&\mathbb{E}\left[\xi_{2i-1}^{(1,2)}(u,v)\big|{\cal{G}}_{n-1}\right]=0;\quad\mathbb{E}\left[\big|\xi_{2i-1}^{(1,2)}(u,v)\big|^{2}\right]\leq C_{u,v}\Delta_{n}^{2}, \label{estxi12}\\
		&\mathbb{E}\left[\big|\xi_{2i-1}^{(1,3)}(u,v)\big|\right]\leq C_{u,v}\Delta_{n}^{3/2},\label{estxi13}\\		
		&\mathbb{E}\left[\big|\xi_{2i-1}^{(1,4)}(u,v)\big|\right]\leq C_{u,v}\Delta_{n}^{3/2}.\label{estxi14}
	\end{align}
	For the remaining term $\xi_{2i-1}^{(1,5)}(u,v)$, we decompose it as $\sum_{i=1}^{2}\xi_{2i-1}^{(1,5,i)}(u,v)$, with
	\begin{align*}
		% \nonumber % Remove numbering (before each equation)
		\xi_{2i-1}^{(1,5,1)}(u,v)=&-2f(\Delta_{2i-1}^{n}W^{X},\Delta_{2i}^{n}W^{Y})
		%{\rm{sin}}\left(\frac{\sqrt{2u}\sigma_{2i-2}^{X}\Delta_{2i-1}^{n}W^{X}+\sqrt{2v}\sigma_{2i-2}^{Y}\Delta_{2i}^{n}W^{Y}}{\sqrt{\Delta_{n}}}\right)
		\left(\sqrt{2u}\int_{t_{2i-2}^n}^{t_{2i-1}^n}\zeta_{s}^{X}\mathrm{d}W_{s}^{X}
		%{\rm{sin}}\left(\frac{\sqrt{2u}\sigma_{2i-2}^{X}\Delta_{2i-1}^{n}W^{X}+\sqrt{2v}\sigma_{2i-2}^{Y}\Delta_{2i}^{n}W^{Y}}{\sqrt{\Delta_{n}}}\right)
		+\sqrt{2v}\int_{t_{2i-1}^n}^{t_{2i}^n}\zeta_{s}^{Y}\mathrm{d}W_{s}^{Y}\right),\\
		\xi_{2i-1}^{(1,5,2)}(u,v)=&-2f(\Delta_{2i-1}^{n}W^{X},\Delta_{2i}^{n}W^{Y})
		\left(\sqrt{2u}\int_{t_{2i-2}^n}^{t_{2i-1}^n}\eta_{s}^{X}\mathrm{d}W_{s}^{X}
		+\sqrt{2v}\int_{t_{2i-1}^n}^{t_{2i}^n}\eta_{s}^{Y}\mathrm{d}W_{s}^{Y}\right),
	\end{align*}
    where
	\begin{align*}
		% \nonumber % Remove numbering (before each equation)
		\zeta_{s}^{Z}=&\int_{t_{2i-2}^n}^{s}v_{2i-2}^{Z}{\rm{d}}W_{r}^{Z}+\int_{t_{2i-2}^n}^{s}v_{2i-2}^{'Z}{\rm{d}}W_{r}^{'Z}+\int_{t_{2i-2}^n}^{s}\int_{\mathbb{R}}\delta^{'Z}(t_{2i-2}^n-,z)\tilde{\mu}^{'Z}(\mathrm{d}r,\mathrm{d}z),\\
		\eta_{s}^{Z}=&\int_{t_{2i-2}^n}^{s}\tilde{b}_{r}^{Z}{\rm{d}}r+\int_{t_{2i-2}^n}^{s}\big(v_{r}^{Z}-v_{2i-2}^{Z}\big){\rm{d}}W_{r}^{Z}+\int_{t_{2i-2}^n}^{s}\big(v_{r}^{'Z}-v_{2i-2}^{'Z}\big){\rm{d}}W_{r}^{'Z}\\
		&+\int_{t_{2i-2}^n}^{s}\int_{\mathbb{R}}\big(\delta^{'Z}(r-,z)-\delta^{'Z}(t_{2i-2}^n-,z)\big)\tilde{\mu}^{'Z}(\mathrm{d}r,\mathrm{d}z),~~Z=X,Y,\\
		f(x,y)=&{\rm{sin}}\left(\frac{\sqrt{2u}\sigma_{2i-2}^{X}x+\sqrt{2v}\sigma_{2i-2}^{Y}y}{\sqrt{\Delta_{n}}}\right).
	\end{align*}
    Applying It\^{o}'s formula, we have
	\begin{align*}
		% \nonumber % Remove numbering (before each equation)
		\mathbb{E}\left[f(\Delta_{2i-1}^{n}W^{X},\Delta_{2i}^{n}W^{Y})\int_{t_{2i-2}^n}^{t_{2i-1}^n}\left(\zeta_{s}^{X}-\int_{t_{2i-2}^n}^{s}v_{2i-2}^{X}{\rm{d}}W_{r}^{X}\right)\mathrm{d}W_{s}^{X}\Big|{\cal{G}}_{i-1}\right]=0,
		%&=&\mathbb{E}\left[\int_{t_{2i-2}^n}^{t_{2i-1}^n}f_x(W_s^{X}-W_{2i-1}^{X},\Delta_{2i}^{n}W^{Y})\left(\zeta_{s}^{X}-\int_{t_{2i-2}^n}^{s}v_{2i-2}^{X}{\rm{d}}W_{r}^{X}\right)\mathrm{d}s\Big|{\cal{G}}_{i-1}\right]
	\end{align*}
	and
	\begin{align*}
		&\mathbb{E}\left[f(\Delta_{2i-1}^{n}W^{X},\Delta_{2i}^{n}W^{Y})\int_{t_{2i-2}^n}^{t_{2i-1}^n}\left(W_{s}^{X}-W_{2i-2}^{X}\right){\rm{d}}W_{s}^{X}\Big|{\cal{G}}_{i-1}\right]\\
		=&\frac{1}{2}\mathbb{E}\left[f(\Delta_{2i-1}^{n}W^{X},\Delta_{2i}^{n}W^{Y})\left(\left(\Delta_{2i-1}^{n}W^X\right)^{2}-\Delta_{n}\right)\Big|{\cal{G}}_{i-1}\right]=0.
	\end{align*}
    Similarly, we obtain
	\begin{align}
		&\mathbb{E}\left[\xi_{2i-1}^{(1,5,1)}(u,v)\big|{\cal{G}}_{i-1}\right]=0; \quad \mathbb{E}\left[\big|\xi_{2i-1}^{(1,5,1)}(u,v)\big|^{2}\right]\leq C_{u,v}\Delta_{n}^{2},\label{estxi151}\\
		&\mathbb{E}\left[\big|\xi_{2i-1}^{(1,5,2)}(u,v)\big|\right]\leq C_{u,v}\Delta_{n}^{3/2}.\label{estxi152}
	\end{align}
%	Similarly, for $\xi_{2i-1}^{(1,5,y)}(u,v)$, we can get the same conclusion. the desired results be presented as follows:
%	\begin{eqnarray*}
%		&&\mathbb{E}\left[\xi_{2i-1}^{(1,5,x(a))}(u,v)+\xi_{2i-1}^{(1,5,y(a))}(u,v)\big|{\cal{G}}_{i-1}\right]=0, \quad \mathbb{E}\left|\xi_{2i-1}^{(1,5,x(a))}(u,v)+\xi_{2i-1}^{(1,5,y(a))}(u,v)\right|^{2}\leq C\Delta_{n}^{2};\\
%		&&\mathbb{E}\left|\xi_{2i-1}^{(1,5,x(b))}(u,v)+\xi_{2i-1}^{(1,5,y(b))}(u,v)\right|\leq C\Delta_{n}^{3/2}.
%	\end{eqnarray*}

    Finally, it follows from \eqref{estxi31} to \eqref{estxi152}, that
    %\eqref{estxi32}, \eqref{est2xi32}, \eqref{estxi33}, \eqref{estxi11}, \eqref{estxi12-4} and \eqref{estxi15} that
    \begin{equation*}
	    \sum_{i=1}^{\lfloor n/2\rfloor}\xi_{2i-1}^{(1)}(u,v)\stackrel{P}{\longrightarrow}0,~~\sum_{i=1}^{\lfloor n/2\rfloor}\xi_{2i-1}^{(3)}(u,v)\stackrel{P}{\longrightarrow}0.
	\end{equation*}
    \qed
    \end{pf}

	Now, we will give proof of the main results.

	\begin{pot1}
		
	By the Lemma \ref{lemma1}, we have
	\begin{equation*}
		\sum_{i=1}^{\lfloor n/2\rfloor}\xi_{2i-1}^{(1)}(u,v)\stackrel{P}{\longrightarrow}0,~~\sum_{i=1}^{\lfloor n/2\rfloor}\xi_{2i-1}^{(3)}(u,v)\stackrel{P}{\longrightarrow}0.
	\end{equation*}
	%Now we turn to $\sum_{i=1}^{\lfloor n/2\rfloor}\xi_{2i-1}^{(2)}(u,v)$.
	Let $\zeta_{i}^{(2)}(u,v)=\xi_{2i-1}^{(2)}(u,v)$ and note that $\{\zeta_{j}^{(2)}(u,v),~j=1,2,\cdots,\lfloor n/2\rfloor\}$
	%$$\left\{\zeta_{j}^{(2)}=2\Delta_n^{\frac12}\cos\left(\frac{\sqrt{2u}\sigma_{2j-2}^X\Delta_{2j-1}^n{W^X}+\sqrt{2v}\sigma_{2j-2}^Y\Delta_{2j}^n{W^Y}}{\sqrt{\Delta_n}}\right)j=1,2...,\lfloor n/2\rfloor.\right\}$$
	is a $\cal{G}$-martingale sequence. The argument used in the proof of Theorem 1 of  \cite{todorov2012realized} shows that we need to prove the finite-dimensional convergence of $\sum_{j=1}^{\lfloor n/2\rfloor}\zeta_j^{(2)}(u,v)$ and  the tightness of $\sum_{i=1}^{\lfloor n/2\rfloor}\sum_{j=1}^{3}\xi_{2i-1}^{(j)}(u,v)$ to complete the proof of the stable convergence in \eqref{equ3}.
	
	By a standard argument of the stable limit theorem, as stated in Theorem IX.7.28 of \cite{jacod2003limit}, it is enough to show the following results for the finite-dimensional convergence of $\sum_{j=1}^{\lfloor n/2\rfloor}\zeta_j^{(2)}(u,v)$:
	\begin{align}
		&\sum_{j=1}^{\lfloor n/2\rfloor}\mathbb{E}\left[\zeta_j^{(2)}(u,v)\zeta_j^{(2)}(u',v')\Big|{\cal G}_{j-1}\right]\stackrel{P}{\longrightarrow} F_p(u,v,u',v'),\label{asyco}\\
		&\sum_{j=1}^{\lfloor n/2\rfloor}\mathbb{E}\left[\left(\zeta_j^{(2)}(u,v)\right)^4\Big|{\cal G}_{j-1}\right]\stackrel{P}{\longrightarrow} 0,\label{fourmo}\\
		&\sum_{j=1}^{\lfloor n/2\rfloor}\mathbb{E}\left[\zeta_j^{(2)}(u,v)\delta_jW^Z\Big|{\cal G}_{j-1}\right]\stackrel{P}{\longrightarrow} 0, ~\text{for}~ Z=X, Y, \label{selfor}\\
		&\sum_{j=1}^{\lfloor n/2\rfloor}\mathbb{E}\left[\zeta_j^{(2)}(u,v)\delta_jN\Big|{\cal G}_{j-1}\right]\stackrel{P}{\longrightarrow} 0, \label{orth}
	\end{align}
	where $N$ is any bounded ${\cal G}$-martingale orthogonal to $W^X$ and $W^Y$, $\delta_jW^Z=W^Z_{2j\Delta_n}-W^Z_{2(j-1)\Delta_n}$ and
	$\delta_jN=N_{2j\Delta_n}-N_{2(j-1)\Delta_n}$.\\
	To show \eqref{asyco}, we have
	\begin{align*}
		&\sum_{j=1}^{\lfloor n/2\rfloor}\mathbb{E}\left[\zeta_j^{(2)}(u,v)\zeta_j^{(2)}(u',v')\Big|{\cal G}_{j-1}\right]\\
		=&2\Delta_n\sum_{j=1}^{\lfloor n/2\rfloor}\mathbb{E}\left[\cos\left(\frac{(\sqrt{2u}+\sqrt{2u'})\sigma_{2j-2}^X\Delta_{2j-1}^n{W^X}+(\sqrt{2v}+\sqrt{2v'})\sigma_{2j-2}^Y\Delta_{2j}^n{W^Y}}{\sqrt{\Delta_n}}\right)\Big|{\cal G}_{j-1}\right]\\
		&+2\Delta_n\sum_{j=1}^{\lfloor n/2\rfloor}\mathbb{E}\left[\cos\left(\frac{(\sqrt{2u}-\sqrt{2u'})\sigma_{2j-2}^X\Delta_{2j-1}^n{W^X}+(\sqrt{2v}-\sqrt{2v'})\sigma_{2j-2}^Y\Delta_{2j}^n{W^Y}}{\sqrt{\Delta_n}}\right)\Big|{\cal G}_{j-1}\right]\\
		&-4\Delta_n\sum_{j=1}^{\lfloor n/2\rfloor}\mbox{e}^{-\langle (u+u', v+v'), ((\sigma_{2j-2}^X)^2, (\sigma_{2j-2}^Y)^2)\rangle}\\
		=&2\Delta_n\sum_{j=1}^{\lfloor n/2\rfloor}\left\{\mbox{e}^{-(\sqrt{u}+\sqrt{u'})^2(\sigma_{2j-2}^X)^2-(\sqrt{v}+\sqrt{v'})^2(\sigma_{2j-2}^Y)^2}+\mbox{e}^{-(\sqrt{u}-\sqrt{u'})(\sigma_{2j-2}^X)^2-(\sqrt{v}-\sqrt{v'})(\sigma_{2j-2}^Y)^2}\right\}\\
		&-4\Delta_n\sum_{i=1}^{\lfloor n/2\rfloor}\mbox{e}^{-\langle (u+u', v+v'), ((\sigma_{2j-2}^X)^2, (\sigma_{2j-2}^Y)^2)\rangle}\\
		\stackrel{P}{\longrightarrow}&\int_0^T\left\{\mbox{e}^{-(\sqrt{u}+\sqrt{u'})^2(\sigma_s^X)^2-(\sqrt{v}+\sqrt{v'})^2(\sigma_s^Y)^2}+\mbox{e}^{-(\sqrt{u}-\sqrt{u'})^2(\sigma_s^X)^2-(\sqrt{v}-\sqrt{v'})^2(\sigma_s^Y)^2}\right.\\
		&\left.~~~~~~-2\mbox{e}^{-(u+u')(\sigma_s^X)^2-(v+v')(\sigma_s^Y)^2}\right\}{\rm{d}}s.
	\end{align*}
	For \eqref{fourmo}, we simply have
	\begin{equation*}
		\sum_{j=1}^{\lfloor n/2\rfloor}\mathbb{E}\left[\left(\zeta_j^{(2)}(u,v)\right)^4\Big|{\cal G}_{j-1}\right]\leq C\Delta_n\stackrel{P}{\longrightarrow}0.
	\end{equation*}
	Next we show \eqref{selfor} by letting $Z=X$, since the proof for $Z=Y$ is similar. Observing that
	\begin{align}\label{zetadeltaw}
		&\sum_{j=1}^{\lfloor n/2\rfloor}\mathbb{E}\left[\zeta_j^{(2)}(u,v)\delta_jW^X\Big|{\cal G}_{j-1}\right]\nonumber\\
		=&2\Delta_n^{\frac12}\sum_{j=1}^{\lfloor n/2\rfloor}\mathbb{E}\left[\cos\left(\frac{\sqrt{2u}\sigma_{2j-2}^X\Delta_{2j-1}^n{W^X}+\sqrt{2v}\sigma_{2j-2}^Y\Delta_{2j}^n{W^Y}}{\sqrt{\Delta_n}}\right)\delta_jW^X\Big|{\cal G}_{j-1}\right]
		\nonumber\\
		=&2\Delta_n^{\frac12}\sum_{j=1}^{\lfloor n/2\rfloor}\mathbb{E}\left[\cos\left(\frac{\sqrt{2u}\sigma_{2j-2}^X\Delta_{2j-1}^n{W^X}+\sqrt{2v}\sigma_{2j-2}^Y\Delta_{2j}^n{W^Y}}{\sqrt{\Delta_n}}\right)\Delta_{2j-1}^nW^X\Big|{\cal G}_{j-1}\right]\nonumber\\
		&+2\Delta_n^{\frac12}\sum_{j=1}^{\lfloor n/2\rfloor}\mathbb{E}\left[\cos\left(\frac{\sqrt{2u}\sigma_{2j-2}^X\Delta_{2j-1}^n{W^X}+\sqrt{2v}\sigma_{2j-2}^Y\Delta_{2j}^n{W^Y}}{\sqrt{\Delta_n}}\right)\Delta_{2j}^nW^X\Big|{\cal G}_{j-1}\right],
	\end{align}
	the first term of \eqref{zetadeltaw} is equal to zero because the variable inside the expectation is an odd function of $\Delta_{2j-1}^nW^X$ and $\Delta_{2j}^nW^Y$ is independent of $\Delta_{2j-1}^nW^X$. For the second term of \eqref{zetadeltaw}, we have
	\begin{align}\label{zetadelta2}
		&\sum_{j=1}^{\lfloor n/2\rfloor}\mathbb{E}\left[\cos\left(\frac{\sqrt{2u}\sigma_{2j-2}^X\Delta_{2j-1}^n{W^X}+\sqrt{2v}\sigma_{2j-2}^Y\Delta_{2j}^n{W^Y}}{\sqrt{\Delta_n}}\right)\Delta_{2j}^nW^X\Big|{\cal G}_{j-1}\right]\nonumber\\
		=&\sum_{j=1}^{\lfloor n/2\rfloor}\mathbb{E}\left[\cos\left(\frac{\sqrt{2u}\sigma_{2j-2}^X\Delta_{2j-1}^n{W^X}}{\sqrt{\Delta_n}}\right)\cos\left(\frac{\sqrt{2v}\sigma_{2j-2}^Y\Delta_{2j}^n{W^Y}}{\sqrt{\Delta_n}}\right)\Delta_{2j}^nW^X\Big|{\cal G}_{j-1}\right]\nonumber\\
		&-\sum_{j=1}^{\lfloor n/2\rfloor}\mathbb{E}\left[\sin\left(\frac{\sqrt{2u}\sigma_{2j-2}^X\Delta_{2j-1}^n{W^X}}{\sqrt{\Delta_n}}\right)\sin\left(\frac{\sqrt{2v}\sigma_{2j-2}^Y\Delta_{2j}^n{W^Y}}{\sqrt{\Delta_n}}\right)\Delta_{2j}^nW^X\Big|{\cal G}_{j-1}\right].
	\end{align}
	The second term of \eqref{zetadelta2} is zero because $\sin(\cdot)$ is an odd function and $\Delta_{2j-1}^n{W^X}$ is independent of both $\Delta_{2j}^n{W^X}$ and $\Delta_{2j}^n{W^Y}$. For the first term of \eqref{zetadelta2}, It\^{o}'s formula yields
	\begin{align*}
		\cos\left(\frac{\sqrt{2v}\sigma_{2j-2}^Y\Delta_{2j}^n{W^Y}}{\sqrt{\Delta_n}}\right)%\cos\big(\frac{\sqrt{2v}\sigma_{i-1}^Y\Delta_{i+1}^n{W^Y}}{\sqrt{\Delta_n}}\big)\\
		=&1+\int_{t_{2j-1}^n}^{t_{2j}^n}\sin\left(\frac{\sqrt{2v}\sigma_{2j-2}^Y\int_{t_{2j-1}^n}^s \mathrm{d}W_u^Y}{\sqrt{\Delta_n}}\right)
		\frac{\sqrt{2v}}{\sqrt{\Delta_n}}\sigma_{2j-2}^Y \mathrm{d}W_s^Y\\
		&-\frac{1}{2}\int_{t_{2j-1}^n}^{t_{2j}^n}\cos\left(\frac{\sqrt{2v}\sigma_{2j-2}^X\int_{t_{2j-1}^n}^s \mathrm{d}W_u^Y}{\sqrt{\Delta_n}}\right)
		\left(\frac{\sqrt{2v}}{\sqrt{\Delta_n}}\sigma_{2j-2}^Y\right)^2\mathrm{d}s.
	\end{align*}
	Hence, by product formula, we obtain
	\begin{align*}
		&\mathbb{E}\left[\cos\left(\frac{\sqrt{2v}\sigma_{2j-2}^Y\Delta_{2j}^n{W^Y}}{\sqrt{\Delta_n}}\right)\Delta_{2j}^nW^X\Big|{\cal G}_{j-1}\right]\\
		=& \mathbb{E}\left[\int_{t_{2j-1}^n}^{t_{2j}^n}\cos\left(\frac{\sqrt{2v}\sigma_{2j-2}^X\int_{t_{2j-1}^n}^s \mathrm{d}W_u^Y}{\sqrt{\Delta_n}}\right)
		\left(\frac{\sqrt{2v}}{\sqrt{\Delta_n}}\sigma_{2j-2}^Y\right)^2\int_{t_{2j-1}^n}^s \mathrm{d}W_u^X\mathrm{d}s\Big|{\cal G}_{j-1}\right].
	\end{align*}
	Now, we define a function
	$$f(s):=\mathbb{E}\left[\cos\left(\frac{\sqrt{2v}\sigma_{2j-2}^Y\int_{t_{2j-1}^n}^s \mathrm{d}W_u^Y}{\sqrt{\Delta_n}}\right)\int_{t_{2j-1}^n}^s \mathrm{d}W_u^X\Bigg|{\cal G}_{j-1}\right]$$
	for $s\in[t_{2j-1}^n, t_{2j}^n]$, then we have $f(t_{2j-1}^n)=0$ and $f(s)=c\int_{t_{2j-1}^n}^sf(u)du$ where $c$ is a constant. It is easy to see that $f(s)$ is continuous, then $f(s)\equiv0$ in view of the initial condition. Therefore, we have
	%the solution is $f(s)\equiv0$ in view of the initial condition. Therefore, we have
	\begin{align*}
		&\sum_{j=1}^{\lfloor n/2\rfloor}\mathbb{E}\left[\cos\left(\frac{\sqrt{2u}\sigma_{2j-2}^X\Delta_{2j-1}^n{W^X}}{\sqrt{\Delta_n}}\right)\cos\left(\frac{\sqrt{2v}\sigma_{2j-2}^Y\Delta_{2j}^n{W^Y}}{\sqrt{\Delta_n}}\right)\Delta_{2j}^nW^X\Big|{\cal G}_{j-1}\right]\\
		=&\sum_{j=1}^{\lfloor n/2\rfloor}\mathbb{E}\left[\cos\left(\frac{\sqrt{2u}\sigma_{2j-2}^X\Delta_{2j-1}^n{W^X}}{\sqrt{\Delta_n}}\right)\Big|{\cal G}_{j-1}\right]\times\\
		& \mathbb{E}\left[\int_{t_{2j-2}^n}^{t_{2j}^n}\cos\left(\frac{\sqrt{2v}\sigma_{2j-2}^X\int_{t_{2j}^n}^s \mathrm{d}W_u^Y}{\sqrt{\Delta_n}}\right)
		\left(\frac{\sqrt{2v}}{\sqrt{\Delta_n}}\sigma_{2j-2}^Y\right)^2\int_{t_{2j}^n}^s \mathrm{d}W_u^X\mathrm{d}s\Big|{\cal G}_{j-1}\right]\\
		=&0.
	\end{align*}
	Finally, for \eqref{orth}, since $N$ is orthogonal to $W^X$ and $W^Y$ and defined on ${\cal F}$, we have
	$$\mathbb{E}\left[\zeta_j^{(2)}(u,v)\delta_jN\Big|{\cal G}_{j-1}\right]=0,$$
	which yields the desired result.
	
	We now show the tightness. For the convenience of the following explanation, we denote:
	\begin{align*}
		% \nonumber % Remove numbering (before each equation)
		\hat{S}_{t,1}(u,v)=&\sum_{i=\lfloor(t-1)/2\Delta_{n}\rfloor+1}^{\lfloor t/2\Delta_{n}\rfloor}\xi_{2i-1}^{(2)}(u,v), \\
		\hat{S}_{t,2}(u,v)=&\sum_{i=\lfloor(t-1)/2\Delta_{n}\rfloor+1}^{\lfloor t/2\Delta_{n}\rfloor}\left(\xi_{2i-1}^{(3,2)}(u,v)+\xi_{2i-1}^{(1,2)}(u,v)+\xi_{2i-1}^{(1,5,1)}(u,v)\right),\\
		\hat{S}_{t,3}(u,v)=&\sum_{i=\lfloor(t-1)/2\Delta_{n}\rfloor+1}^{\lfloor t/2\Delta_{n}\rfloor}\left(\xi_{2i-1}^{(1)}(u,v)+\xi_{2i-1}^{(3)}(u,v)\right)-\hat{S}_{t,2}(u,v).
	\end{align*}
	For $u_{1},u_{2},v_{1},v_{2}\in\mathbb{R}_{+}$,
	\begin{align}
		% \nonumber % Remove numbering (before each equation)
		&\mathbb{E}\left[\sum_{t=1}^{T}\left(\hat{S}_{t,1}(u_{2},v_{2})+\hat{S}_{t,2}(u_{2},v_{2})\right)\right]^{2}\leq C, \label{equation19}\\
		\nonumber&\mathbb{E}\left[\sum_{t=1}^{T}\left(\hat{S}_{t,1}(u_{1},v_{1})+\hat{S}_{t,2}(u_{1},v_{1})-\hat{S}_{t,1}(u_{2},v_{2})-\hat{S}_{t,2}(u_{2},v_{2})\right)\right]^2\\
		\leq&C\left[(\sqrt{u_{1}}-\sqrt{u_{2}})^{2}+(\sqrt{v_{1}}-\sqrt{v_{2}})^{2}\right]\vee\left[(u_{1}-u_{2})^{2}+(v_{1}-v_{2})^{2}\right].\label{equation20}
	\end{align}
	According to Theorem 20 of \cite{ibragimov1981statistical}, we have \\ $\sum_{t=1}^{T}\left(\hat{S}_{t,1}(u,v)+\hat{S}_{t,2}(u,v)\right)$ is tight. Since $$\Delta_{n}^{\beta/2+\iota/2-1/2}\left|\sup\limits_{0\leq u\leq\bar{u},0\leq v\leq\bar{v}}\sum_{i=1}^{\lfloor T/2\Delta_{n}\rfloor}\hat{S}_{t,3}(u,v)\right|$$
	is bounded in probability. Therefore, $\sum_{i=1}^{\lfloor n/2\rfloor}\sum_{j=1}^{3}\xi_{2i-1}^{(j)}(u,v)$ is tight.
	
    Finally, we prove the $\tilde{\Gamma}_{n}$ is a consistent estimator:
	\begin{align*}
		{\tilde{\Gamma}_n}=&{4\Delta_{n}\sum_{i=1}^{\lfloor n/2\rfloor}\mathrm{cos}\big(\frac{\sqrt{2u}\Delta_{2i-1}^{n}X+\sqrt{2v}\Delta_{2i}^{n}Y}{\sqrt{\Delta_{n}}}\big)\mathrm{cos}\big(\frac{\sqrt{2u'}\Delta_{2i-1}^{n}X+\sqrt{2v'}\Delta_{2i}^{n}Y}{\sqrt{\Delta_{n}}}\big)}\nonumber\\
		&{-4\Delta_{n}\sum_{i=1}^{\lfloor n/2\rfloor}\mathrm{cos}\big(\frac{\sqrt{2(u+u')}\Delta_{2i-1}^{n}X+\sqrt{2(v+v')}\Delta_{2i}^{n}Y}{\sqrt{\Delta_{n}}}\big)}.\nonumber\\
		=&4\sum_{i=1}^{\lfloor n/2\rfloor}\frac{1}{2}\Delta_{n}\left[\mathrm{cos}\big(\frac{(\sqrt{2u}+\sqrt{2u'})\Delta_{2i-1}^{n}X+(\sqrt{2v}+\sqrt{2v'})\Delta_{2i}^{n}Y}{\sqrt{\Delta_{n}}}\big)\right.\nonumber\\
		&\left.+\mathrm{cos}\big(\frac{(\sqrt{2u}-\sqrt{2u'})\Delta_{2i-1}^{n}X+(\sqrt{2v}-\sqrt{2v'})\Delta_{2i}^{n}Y}{\sqrt{\Delta_{n}}}\big)\right]\nonumber\\
		&-4\sum_{i=1}^{\lfloor n/2\rfloor}\Delta_{n}\mathrm{cos}\big(\frac{\sqrt{2(u+u')}\Delta_{2i-1}^{n}X+\sqrt{2(v+v')}\Delta_{2i}^{n}Y}{\sqrt{\Delta_{n}}}\big).\nonumber\\
%		\stackrel{P}{\longrightarrow}&2\Delta_{n}\sum_{i=1}^{\lfloor n/2\rfloor}e^{-(\sqrt{u}+\sqrt{u'})^2(\sigma_{2i-2}^{X})^2-(\sqrt{v}+\sqrt{v'})^2(\sigma_{2i-2}^{Y})^2}+2\Delta_{n}\sum_{i=1}^{\lfloor n/2\rfloor}e^{-(\sqrt{u}-\sqrt{u'})^2(\sigma_{2i-2}^{X})^2-(\sqrt{v}-\sqrt{v'})^2(\sigma_{2i-2}^{Y})^2} \nonumber \\
%		&-4\Delta_{n}\sum_{i=1}^{\lfloor n/2\rfloor}e^{-(u+u')(\sigma_{2i-2}^{X})^2-(v+v')(\sigma_{2i-2}^{Y})^2}\nonumber\\
		%=&\int_0^T\left\{\mbox{e}^{-(\sqrt{u}+\sqrt{u'})^2(\sigma_s^X)^2-(\sqrt{v}+\sqrt{v'})^2(\sigma_s^Y)^2}+\mbox{e}^{-(\sqrt{u}-\sqrt{u'})^2(\sigma_s^X)^2-(\sqrt{v}-\sqrt{v'})^2(\sigma_s^Y)^2}\right.\nonumber\\
%&\left.~~~~~~-2\mbox{e}^{-(u+u')(\sigma_s^X)^2-(v+v')(\sigma_s^Y)^2}\right\}{\rm{d}}s.\nonumber
	\end{align*}
Because of the Law of Large Numbers,
\begin{align*}
&{\tilde{\Gamma}_n}-\left[2\Delta_{n}\sum_{i=1}^{\lfloor n/2\rfloor}e^{-(\sqrt{u}+\sqrt{u'})^2(\sigma_{2i-2}^{X})^2-(\sqrt{v}+\sqrt{v'})^2(\sigma_{2i-2}^{Y})^2}+2\Delta_{n}\sum_{i=1}^{\lfloor n/2\rfloor}e^{-(\sqrt{u}-\sqrt{u'})^2(\sigma_{2i-2}^{X})^2-(\sqrt{v}-\sqrt{v'})^2(\sigma_{2i-2}^{Y})^2}\right. \nonumber \\
		&\left.-4\Delta_{n}\sum_{i=1}^{\lfloor n/2\rfloor}e^{-(u+u')(\sigma_{2i-2}^{X})^2-(v+v')(\sigma_{2i-2}^{Y})^2}\right]\stackrel{P}{\longrightarrow}0,\nonumber\\
\end{align*}
and
\begin{align*}
&2\Delta_{n}\sum_{i=1}^{\lfloor n/2\rfloor}e^{-(\sqrt{u}+\sqrt{u'})^2(\sigma_{2i-2}^{X})^2-(\sqrt{v}+\sqrt{v'})^2(\sigma_{2i-2}^{Y})^2}+2\Delta_{n}\sum_{i=1}^{\lfloor n/2\rfloor}e^{-(\sqrt{u}-\sqrt{u'})^2(\sigma_{2i-2}^{X})^2-(\sqrt{v}-\sqrt{v'})^2(\sigma_{2i-2}^{Y})^2} \nonumber \\
		&-4\Delta_{n}\sum_{i=1}^{\lfloor n/2\rfloor}e^{-(u+u')(\sigma_{2i-2}^{X})^2-(v+v')(\sigma_{2i-2}^{Y})^2}\nonumber\\
\stackrel{P}{\longrightarrow}&\int_0^T\left\{\mbox{e}^{-(\sqrt{u}+\sqrt{u'})^2(\sigma_s^X)^2-(\sqrt{v}+\sqrt{v'})^2(\sigma_s^Y)^2}+\mbox{e}^{-(\sqrt{u}-\sqrt{u'})^2(\sigma_s^X)^2-(\sqrt{v}-\sqrt{v'})^2(\sigma_s^Y)^2}\right.\nonumber\\
		&\left.~~~~~~-2\mbox{e}^{-(u+u')(\sigma_s^X)^2-(v+v')(\sigma_s^Y)^2}\right\}{\rm{d}}s.\nonumber
\end{align*}
We can achieve
\begin{align*}
{\tilde{\Gamma}_n}\stackrel{P}{\longrightarrow}&\int_0^T\left\{\mbox{e}^{-(\sqrt{u}+\sqrt{u'})^2(\sigma_s^X)^2-(\sqrt{v}+\sqrt{v'})^2(\sigma_s^Y)^2}+\mbox{e}^{-(\sqrt{u}-\sqrt{u'})^2(\sigma_s^X)^2-(\sqrt{v}-\sqrt{v'})^2(\sigma_s^Y)^2}\right.\nonumber\\
		&\left.~~~~~~-2\mbox{e}^{-(u+u')(\sigma_s^X)^2-(v+v')(\sigma_s^Y)^2}\right\}{\rm{d}}s.\nonumber
\end{align*}
	\end{pot1}
	
%	\begin{ignore}{
%			By Ito's formula, we have
%			\begin{eqnarray*}
%				\cos\big(\frac{\sqrt{2u}\sigma_{i-1}^X\Delta_i^n{W^X}}{\sqrt{\Delta_n}}\big)%\cos\big(\frac{\sqrt{2v}\sigma_{i-1}^Y\Delta_{i+1}^n{W^Y}}{\sqrt{\Delta_n}}\big)\\
%				&=&1+\int_{t_{i-1}^n}^{t_i^n}\sin\big(\frac{\sqrt{2u}\sigma_{i-1}^X\int_{t_{i-1}^n}^s dW_u^X}{\sqrt{\Delta_n}}\big)
%				\frac{\sqrt{2u}}{\sqrt{\Delta_n}}\sigma_{i-1}^X dW_s^X\\
%				&&-\frac{1}{2}\int_{t_{i-1}^n}^{t_i^n}\cos\big(\frac{\sqrt{2u}\sigma_{i-1}^X\int_{t_{i-1}^n}^s dW_u^X}{\sqrt{\Delta_n}}\big)
%				\big(\frac{\sqrt{2u}}{\sqrt{\Delta_n}}\sigma_{i-1}^X\big)^2ds\\
%				&=:&1+M_i^X(u)-A_i^X(u).
%			\end{eqnarray*}
%			Therefore, we have
%			\begin{eqnarray*}
%				\xi_{i}^{(2)}(u,v)=A_i^n(u,v)+B_i^n(u,v)+M_i^n(u,v),
%			\end{eqnarray*}
%			where $A_i^n, B_i^n, M_i^n$ are given by
%			\begin{eqnarray*}
%				A_i^n(u,v)&=&\Delta_n\big\{\big(1-A_i^X(u)\big)\big(1-A_i^Y(v)\big)-
%				\mbox{e}^{-\langle u, v\rangle \cdot \langle(\sigma_{i-1}^X)^2, (\sigma_{i-1}^Y)^2\rangle}\big\}\\
%				B_i^n(u,v)&=&
%			\end{eqnarray*}
%		}
%	\end{ignore}

	\begin{pot2}
	We redefine
	\begin{align*}
		\xi_{i}^{(1)}(u,v)=&\Delta_n^{\frac12}\left\{\cos\big(\frac{\sqrt{2u}\Delta_i^n{X}+\sqrt{2v}\Delta_{i+1}^n{Y}}{\sqrt{\Delta_n}}\big)- \cos\big(\frac{\sqrt{2u}\sigma_{i-1}^X\Delta_i^n{W^X}+\sqrt{2v}\sigma_{i-1}^Y\Delta_{i+1}^n{W^Y}}{\sqrt{\Delta_n}}\big)\right\},\\
		\xi_{i}^{(2)}(u,v)=&\Delta_n^{\frac12}\left\{\cos\big(\frac{\sqrt{2u}\sigma_{i-1}^X\Delta_i^n{W^X}+\sqrt{2v}\sigma_{i-1}^Y\Delta_{i+1}^n{W^Y}}{\sqrt{\Delta_n}}\big)-\mbox{e}^{-\langle (u, v), ((\sigma_{i-1}^X)^2, (\sigma_{i-1}^Y)^2)\rangle}\right\},\\
		\xi_{i}^{(3)}(u,v)=&\Delta_n^{-\frac12}\left\{\Delta_n\mbox{e}^{-\langle (u, v), ((\sigma_{i-1}^X)^2, (\sigma_{i-1}^Y)^2)\rangle}-\int_{(i-1)\Delta_n}^{i\Delta_n}\mbox{e}^{-\langle (u, v),((\sigma_s^X)^2, (\sigma_s^Y)^2)\rangle}\mathrm{d}s\right\}.
	\end{align*}
	By the same techniques used in the proof of Lemma \ref{lemma1}, we can show that
	%Now if Assumptions \ref{asu1}-\ref{asu2} hold. We have
	\begin{eqnarray}\label{equation21}
		\sum_{i=1}^{n-1}\xi_{i}^{(1)}(u,v)\stackrel{P}{\longrightarrow}0,~~\sum_{i=1}^{n-1}\xi_{i}^{(3)}(u,v)\stackrel{P}{\longrightarrow}0.
	\end{eqnarray}
	%The above proof is similar with the proof of Theorem \ref{thm1}.
	Similar to the proof of Theorem \ref{thm1}, to obtain the convergence in \eqref{conve2}, we only need to show the finite-dimensional convergence of $\sum_{i=1}^{n-1}\xi_i^{(2)}(u,v)$ and the tightness of $\sum_{i=1}^{n-1}\sum_{j=1}^{3}\xi_{i}^{(j)}(u,v)$.
	
	First consider $\sum_{i=1}^{n-1}\xi_{i}^{(2)}(u,v)$. To deal with the dependence of increments, we use the big-small-block method. Finally, those big blocks dominate the asymptotic performance, while those small blocks are negligible. To create, we first split the increments so that each block contains $p+1$ increments, and then gather the first $p$ increments (big block), and the last one belongs to another block (small block). Precisely, let $N_p=\lfloor\frac{n-1}{p+1}\rfloor$, $\alpha_j^p:=(j-1)(p+1)$, and
	\begin{equation*}
		\zeta_j^{n, p}(u,v)=\sum_{i=\alpha_{j}^p+1}^{\alpha_{j+1}^p-1}\xi_i^{(2)}(u,v),~\eta_j^{n,p}(u,v)=\xi_{\alpha_{j+1}^p}^{(2)}(u,v).
	\end{equation*}
	Hence,
	\begin{equation*}
		\sum_{i=1}^{n-1}\xi_{i}^{(2)}(u,v)=\sum_{j=1}^{N_p}\zeta_j^{n, p}(u,v)+\sum_{j=1}^{N_p}\eta_j^{n, p}(u,v)+\sum_{i=N_p(p+1)+1}^{n-1}\xi_{i}^{(2)}(u,v).
	\end{equation*}
	Observing that $\sup_{u,v}\sup_i|\xi_{i}^{(2)}(u,v)|\leq 2\Delta_n^{1/2}$, thus
	\begin{align*}
		&\limsup_{p\rightarrow\infty}\lim_{n\rightarrow\infty}\sup_{u,v}\mathbb{E}\left|\sum_{j=1}^{N_p}\eta_j^{n, p}(u,v)+\sum_{i=N_p(p+1)+1}^{n-1}\xi_{i}^{(2)}(u,v)\right|^{2}\\
		\leq& C\limsup_{p\rightarrow\infty}\lim_{n\rightarrow\infty}\Big(\frac{1}{p+1}+p\Delta_{n}\Big)=0.
	\end{align*}
	For the term $\sum_{j=1}^{N_p}\zeta_j^{n, p}(u,v)$, we consider $\sum_{j=1}^{N_p}\zeta_j^{n, p\prime}(u,v)$ with
	\begin{align*}
		&\zeta_j^{n, p\prime}(u,v)=\sum_{i=\alpha_j^p+1}^{\alpha_{j+1}^p-1}\xi_i^{(2)\prime}(u,v),\\
		&\xi_{i}^{(2)\prime}(u,v)=\Delta_n^{\frac12}\left\{\cos\Big(\frac{\sqrt{2u}\sigma_{\alpha_j^p}^X\Delta_i^n{W^X}+\sqrt{2v}\sigma_{\alpha_j^p}^Y\Delta_{i+1}^n{W^Y}}{\sqrt{\Delta_n}}\Big)-\mbox{e}^{-\langle (u, v), ((\sigma_{\alpha_j^p}^X)^2, (\sigma_{\alpha_j^p}^Y)^2)\rangle}\right\}.
		%&&\sum_{j=1}^{N_p}\zeta_j^{n, p}(u,v)=\sum_{j=1}^{N_p}\big(\zeta_j^{n, p}(u,v)-\mathbb{E}[\zeta_j^{n, p}(u,v)|{\cal F}_{\alpha_{j-1}(p)}]\big)+\sum_{j=1}^{N_p}\mathbb{E}[\zeta_j^{n, p}(u,v)|{\cal F}_{\alpha_{j-1}(p)}].
	\end{align*}
	Under Assumption \ref{asu2}, we can show that
	\begin{equation}\label{equation22}
		\sum_{j=1}^{N_p}\Big(\zeta_j^{n, p}(u,v)-\zeta_j^{n, p\prime}(u,v)\Big)\stackrel{P}{\longrightarrow}0.
	\end{equation}
	The proof of the convergence in \eqref{equation22} is similar to that in \eqref{equation21}.
	Denote ${\cal G}_j:={\cal F}_{\alpha_j^p}$, then $\{\zeta_j^{n, p\prime}(u,v), ~j=1,2,\cdots,N_p\}$ is a ${\cal G}$-martingale sequence. In order to apply the Theorm IX.7.28 of \cite{jacod2003limit}, it is sufficient to show the following convergence results:
	%The finite dimension convergence in the space ${\cal C}(\mathbb{R}_+^2)$ can be shown by applying the standard stable limit theorem as stated in Theorm IX.7.28 of \cite{jacod2003limit}. It suffices to show the results in the following lemma.
	%Suppose Assumption \ref{asu1} and \ref{asu2} hold. It is easy to prove the following results:
	\begin{align}
		&\sum_{j=1}^{N_p}\mathbb{E}[\zeta_j^{n, p\prime}(u,v)\zeta_j^{n, p\prime}(u',v')|{\cal G}_j]\stackrel{P}{\longrightarrow} \int_0^TF(\sqrt{u}\sigma_s^X,\sqrt{v}\sigma_s^Y,\sqrt{u'}\sigma_s^X,\sqrt{v'}\sigma_s^Y, \rho_s)\mathrm{d}s,\label{asyco1}\\
		&\sum_{j=1}^{N_p}\mathbb{E}[\big(\zeta_j^{n, p\prime}(u,v)\big)^4|{\cal G}_j]\stackrel{P}{\longrightarrow} 0,\label{fourmo2}\\
		&\sum_{j=1}^{N_p}\mathbb{E}[\zeta_j^{n, p\prime}(u,v)\Delta_jW^Z|{\cal G}_j]\stackrel{P}{\longrightarrow} 0, ~\text{for}~ Z=X, Y, \label{selfor2}\\
		&\sum_{j=1}^{N_p}\mathbb{E}[\zeta_j^{n, p\prime}(u,v)\Delta_jN|{\cal G}_j]\stackrel{P}{\longrightarrow} 0, \label{orth2}\\
		&\lim_{p\rightarrow\infty}F_p(\sqrt{u}\sigma_s^X,\sqrt{v}\sigma_s^Y,\sqrt{u'}\sigma_s^X,\sqrt{v'}\sigma_s^Y, \rho_s)=F(\sqrt{u}\sigma_s^X,\sqrt{v}\sigma_s^Y,\sqrt{u'}\sigma_s^X,\sqrt{v'}\sigma_s^Y, \rho_s),\nonumber
	\end{align}
	where $N$ is any bounded ${\cal G}$-martingale orthogonal to $W^X$ and $W^Y$,
	\begin{align*}
		% \nonumber % Remove numbering (before each equation)
		F_p(\sqrt{u}\sigma_s^X,\sqrt{v}\sigma_s^Y,\sqrt{u'}\sigma_s^X,\sqrt{v'}\sigma_s^Y,\rho_s)=&\frac{p}{p+1}F_{1}(\sqrt{u}\sigma_s^X,\sqrt{v}\sigma_s^Y,\sqrt{u'}\sigma_s^X,\sqrt{v'}\sigma_s^Y,\rho_s)\\
		&+\frac{p-1}{p+1}F_{2}(\sqrt{u}\sigma_s^X,\sqrt{v}\sigma_s^Y,\sqrt{u'}\sigma_s^X,\sqrt{v'}\sigma_s^Y,\rho_s)\\
		&+\frac{p-1}{p+1}F_{3}(\sqrt{u}\sigma_s^X,\sqrt{v}\sigma_s^Y,\sqrt{u'}\sigma_s^X,\sqrt{v'}\sigma_s^Y,\rho_s),
	\end{align*}
	and the definition of $F_1$, $F_2$ and $F_3$ will be given in the following proof.
	Note that the proofs of \eqref{fourmo2} to \eqref{orth2} are similar to those of \eqref{fourmo} to \eqref{orth}. Thus, we only give the proof of \eqref{asyco1}.
	To show \eqref{asyco1}, note that
	\begin{align*}
		\mathbb{E}[\zeta_j^{n, p\prime}(u,v)\zeta_j^{n, p\prime}(u',v')|{\cal G}_j]=&\sum_{i=\alpha_j^p+1}^{\alpha_{j+1}^p-1}\mathbb{E}[\xi_i^{(2)\prime}(u,v)\xi_i^{(2)\prime}(u',v')|{\cal G}_j]\\
		&+\sum_{i=\alpha_j^p+1}^{\alpha_{j+1}^p-2}\mathbb{E}[\xi_i^{(2)\prime}(u,v)\xi_{i+1}^{(2)\prime}(u',v')|{\cal G}_j]\\
		&+\sum_{i=\alpha_j^p+1}^{\alpha_{j+1}^p-2}\mathbb{E}[\xi_i^{(2)\prime}(u',v')\xi_{i+1}^{(2)\prime}(u,v)|{\cal G}_j].
	\end{align*}
	First,
	\begin{align*}
		&~~~~\sum_{i=\alpha_j^p+1}^{\alpha_{j+1}^p-1}\mathbb{E}[\xi_i^{(2)\prime}(u,v)\xi_i^{(2)\prime}(u',v')|{\cal G}_j]\\
		&=\frac{p\Delta_n}{2}\mbox{e}^{-u(\sigma_{\alpha_j^p}^X)^2-v(\sigma_{\alpha_j^p}^Y)^2-u'(\sigma_{\alpha_j^p}^X)^2-v'(\sigma_{\alpha_j^p}^Y)^2}\times\\
		&~~~~~~~~~~~~~~~\{\mbox{e}^{-2((\sqrt{uu'}\sigma_{\alpha_j^p}^X)^2+(\sqrt{vv'}\sigma_{\alpha_j^p}^Y)^2)}+\mbox{e}^{2((\sqrt{uu'}\sigma_{\alpha_j^p}^X)^2+(\sqrt{vv'}\sigma_{\alpha_j^p}^Y)^2)}-2\}\\
		&=:\frac{p\Delta_n}{2}F_{1}(\sqrt{u}\sigma_{\alpha_j^p}^X,\sqrt{v}\sigma_{\alpha_j^p}^Y,\sqrt{u'}\sigma_{\alpha_j^p}^X,\sqrt{v'}\sigma_{\alpha_j^p}^Y, \rho_{\alpha_j^p}).
	\end{align*}
	Second,
	\begin{align*}
		&~~~~\sum_{i=\alpha_j^p+1}^{\alpha_{j+1}^p-2}\mathbb{E}[\xi_i^{(2)\prime}(u,v)\xi_{i+1}^{(2)\prime}(u',v')|{\cal G}_j]\\
		&=\frac{(p-1)\Delta_n}{2}\mbox{e}^{-u(\sigma_{\alpha_j^p}^X)^2-v(\sigma_{\alpha_j^p}^Y)^2-u'(\sigma_{\alpha_j^p}^X)^2-v'(\sigma_{\alpha_j^p}^Y)^2}\{\mbox{e}^{-2\sqrt{u'v}\sigma_{\alpha_j^p}^X\sigma_{\alpha_j^p}^Y\rho_{\alpha_j^p}}+\mbox{e}^{2\sqrt{u'v}\sigma_{\alpha_j^p}^X\sigma_{\alpha_j^p}^Y\rho_{\alpha_j^p}}-2\}\\
		&=:\frac{(p-1)\Delta_n}{2}F_{2}(\sqrt{u}\sigma_{\alpha_j^p}^X,\sqrt{v}\sigma_{\alpha_j^p}^Y,\sqrt{u'}\sigma_{\alpha_j^p}^X,\sqrt{v'}\sigma_{\alpha_j^p}^Y, \rho_{\alpha_j^p}).
	\end{align*}
	Third,
	\begin{align*}
		&~~~~\sum_{i=\alpha_j^p+1}^{\alpha_{j+1}^p-2}\mathbb{E}[\xi_i^{(2)\prime}(u',v')\xi_{i+1}^{(2)\prime}(u,v)|{\cal G}_j]\\
		&=\frac{(p-1)\Delta_n}{2}\mbox{e}^{-u(\sigma_{\alpha_j^p}^X)^2-v(\sigma_{\alpha_j^p}^Y)^2-u'(\sigma_{\alpha_j^p}^X)^2-v'(\sigma_{\alpha_j^p}^Y)^2}\{\mbox{e}^{-2\sqrt{uv'}\sigma_{\alpha_j^p}^X\sigma_{\alpha_j^p}^Y\rho_{\alpha_j^p}}+\mbox{e}^{2\sqrt{uv'}\sigma_{\alpha_j^p}^X\sigma_{\alpha_j^p}^Y\rho_{\alpha_j^p}}-2\}\\
		&=:\frac{(p-1)\Delta_n}{2}F_{3}(\sqrt{u}\sigma_{\alpha_j^p}^X,\sqrt{v}\sigma_{\alpha_j^p}^Y,\sqrt{u'}\sigma_{\alpha_j^p}^X,\sqrt{v'}\sigma_{\alpha_j^p}^Y, \rho_{\alpha_j^p}).
	\end{align*}
	Then, we can obtain \eqref{asyco1}.
	%Next, we consider the tightness of $\sum_{i=1}^{n-1}\sum_{j=1}^{3}\xi_{i}^{(j)}(u,v)$.
	Next, the tightness argument in Theorem \ref{thm1} also proves the tightness of $\sum_{i=1}^{n-1}\sum_{j=1}^{3}\xi_{i}^{(j)}(u,v)$. Hence, the proof of the convergence in \eqref{conve2} is complete.
	%Since the decomposition of $\xi_{i}^{(1)}(u,v)$, $\xi_{i}^{(3)}(u,v)$ in Theorem \ref{thm2} is similar to the decomposition of $\xi_{2i-1}^{(1)}(u,v)$, $\xi_{2i-1}^{(1)}(u,v)$ in Theorem \ref{thm1}, and the size of each decomposition item has the same order. Therefore, formulas (\ref{equation19}), (\ref{equation20}) also hold for Theorem \ref{thm2}. Hence, the proof of the convergence in \eqref{conve2} is finished.
	
	Finally, the proof of the consistency of $\tilde{\Gamma}_{n}$ is showed as follows:
	\begin{align*}
			% \nonumber % Remove numbering (before each equation)
		{\tilde{\Gamma}_n}=&\Delta_{n}\sum_{i=1}^{n-1}\mathrm{cos}\big(\frac{\sqrt{2u}\Delta_{i}^{n}X+\sqrt{2v}\Delta_{i+1}^{n}Y}{\sqrt{\Delta_{n}}}\big)\mathrm{cos}\big(\frac{\sqrt{2u'}\Delta_{i}^{n}X+\sqrt{2v'}\Delta_{i+1}^{n}Y}{\sqrt{\Delta_{n}}}\big)\nonumber\\ &+\Delta_{n}\sum_{i=1}^{n-2}\mathrm{cos}\big(\frac{\sqrt{2u}\Delta_{i-1}^{n}X}{\sqrt{\Delta_{n}}}\big) \mathrm{cos}\big(\frac{\sqrt{2v}\Delta_{i+1}^{n}Y}{\sqrt{\Delta_{n}}}\big)\mathrm{cos}\big(\frac{\sqrt{2u'}\Delta_{i+1}^{n}X}{\sqrt{\Delta_{n}}}\big)\mathrm{cos}\big(\frac{\sqrt{2v'}\Delta_{i+2}^{n}Y}{\sqrt{\Delta_{n}}}\big)\nonumber\\
		&+\Delta_{n}\sum_{i=1}^{n-2}\mathrm{cos}\big(\frac{\sqrt{2u'}\Delta_{i-1}^{n}X}{\sqrt{\Delta_{n}}}\big) \mathrm{cos}\big(\frac{\sqrt{2v'}\Delta_{i+1}^{n}Y}{\sqrt{\Delta_{n}}}\big)\mathrm{cos}\big(\frac{\sqrt{2u}\Delta_{i+1}^{n}X}{\sqrt{\Delta_{n}}}\big)\mathrm{cos}\big(\frac{\sqrt{2v}\Delta_{i+2}^{n}Y}{\sqrt{\Delta_{n}}}\big)\nonumber\\
		&-3\Delta_{n}\sum_{i=1}^{n-1}\mathrm{cos}\big(\frac{\sqrt{2(u+u')}\Delta_{i}^{n}X+\sqrt{2(v+v')}\Delta_{i+1}^{n}Y}{\sqrt{\Delta_{n}}}\big)\nonumber\\
		=&\Delta_{n}\sum_{i=1}^{n-1}\mathrm{cos}\big(\frac{\sqrt{2u}\Delta_{i}^{n}X+\sqrt{2v}\Delta_{i+1}^{n}Y}{\sqrt{\Delta_{n}}}\big)\mathrm{cos}\big(\frac{\sqrt{2u'}\Delta_{i}^{n}X+\sqrt{2v'}\Delta_{i+1}^{n}Y}{\sqrt{\Delta_{n}}}\big)\nonumber\\
		&-\Delta_{n}\sum_{i=1}^{n-1}\mathrm{cos}\big(\frac{\sqrt{2(u+u')}\Delta_{i}^{n}X+\sqrt{2(v+v')}\Delta_{i+1}^{n}Y}{\sqrt{\Delta_{n}}}\big)\nonumber\\
		&+\Delta_{n}\sum_{i=1}^{n-2}\mathrm{cos}\big(\frac{\sqrt{2u}\Delta_{i-1}^{n}X}{\sqrt{\Delta_{n}}}\big) \mathrm{cos}\big(\frac{\sqrt{2v}\Delta_{i+1}^{n}Y}{\sqrt{\Delta_{n}}}\big)\mathrm{cos}\big(\frac{\sqrt{2u'}\Delta_{i+1}^{n}X}{\sqrt{\Delta_{n}}}\big)\mathrm{cos}\big(\frac{\sqrt{2v'}\Delta_{i+2}^{n}Y}{\sqrt{\Delta_{n}}}\big)\nonumber\\
		&-\Delta_{n}\sum_{i=1}^{n-1}\mathrm{cos}\big(\frac{\sqrt{2(u+u')}\Delta_{i}^{n}X+\sqrt{2(v+v')}\Delta_{i+1}^{n}Y}{\sqrt{\Delta_{n}}}\big)\nonumber\\
		&+\Delta_{n}\sum_{i=1}^{n-2}\mathrm{cos}\big(\frac{\sqrt{2u'}\Delta_{i-1}^{n}X}{\sqrt{\Delta_{n}}}\big) \mathrm{cos}\big(\frac{\sqrt{2v'}\Delta_{i+1}^{n}Y}{\sqrt{\Delta_{n}}}\big)\mathrm{cos}\big(\frac{\sqrt{2u}\Delta_{i+1}^{n}X}{\sqrt{\Delta_{n}}}\big)\mathrm{cos}\big(\frac{\sqrt{2v}\Delta_{i+2}^{n}Y}{\sqrt{\Delta_{n}}}\big)\nonumber\\
		&-\Delta_{n}\sum_{i=1}^{n-1}\mathrm{cos}\big(\frac{\sqrt{2(u+u')}\Delta_{i}^{n}X+\sqrt{2(v+v')}\Delta_{i+1}^{n}Y}{\sqrt{\Delta_{n}}}\big).\nonumber\\
		%\stackrel{P}{\longrightarrow}&\frac{1}{2}\Delta_{n}\sum_{i=1}^{n-1}\mathrm{e}^{-u(\sigma_{i-1}^{X})^2-v(\sigma_{i-1}^{X})^{2}-u'(\sigma_{i-1}^{Y})^2-v'(\sigma_{i-1}^{Y})^2}\left[\mathrm{e}^{-2\sqrt{uu'}(\sigma_{i-1}^{X})^2+2\sqrt{uu'}(\sigma_{i-1}^{X})^2-2\sqrt{vv'}(\sigma_{i-1}^{Y})^2+2\sqrt{vv'}(\sigma_{i-1}^{Y})^2}-2\right]\nonumber\\
		%&+\frac{1}{2}\Delta_{n}\sum_{i=1}^{n-1}\mathrm{e}^{-u(\sigma_{i-1}^{X})^2-v(\sigma_{i-1}^{X})^{2}-u'(\sigma_{i-1}^{Y})^2-v'(\sigma_{i-1}^{Y})^2}\left[\mathrm{e}^{-2\sqrt{vu'}\rho\sigma_{i-1}^{X}\sigma_{i-1}^{Y}+2\sqrt{vu'}\rho\sigma_{i-1}^{X}\sigma_{i-1}^{Y}}-2\right]\nonumber\\
		%&+\frac{1}{2}\Delta_{n}\sum_{i=1}^{n-1}\mathrm{e}^{-u(\sigma_{i-1}^{X})^2-v(\sigma_{i-1}^{X})^{2}-u'(\sigma_{i-1}^{Y})^2-v'(\sigma_{i-1}^{Y})^2}\left[\mathrm{e}^{-2\sqrt{v'u}\rho\sigma_{i-1}^{X}\sigma_{i-1}^{Y}+2\sqrt{v'u}\rho\sigma_{i-1}^{X}\sigma_{i-1}^{Y}}-2\right]\nonumber\\
		%=&\frac{1}{2}\int_{0}^{T}\mathrm{e}^{-u(\sigma_{s}^{X})^2-v(\sigma_{s}^{X})^{2}-u'(\sigma_{s}^{Y})^2-v'(\sigma_{s}^{Y})^2}\left[\mathrm{e}^{-2\sqrt{uu'}(\sigma_{s}^{X})^2+2\sqrt{uu'}(\sigma_{s}^{X})^2-2\sqrt{vv'}(\sigma_{s}^{Y})^2+2\sqrt{vv'}(\sigma_{2}^{Y})^2}\right.\nonumber\\
		%&\left.+\mathrm{e}^{-2\sqrt{vu'}\rho\sigma_{s}^{X}\sigma_{s}^{Y}+2\sqrt{vu'}\rho\sigma_{s}^{X}\sigma_{s}^{Y}}+\mathrm{e}^{-2\sqrt{v'u}\rho\sigma_{s}^{X}\sigma_{s}^{Y}+2\sqrt{v'u}\rho\sigma_{s}^{X}\sigma_{s}^{Y}}-6\right]\mathrm{d}s.\nonumber
	\end{align*}
Because of the Law of Large Number,
\begin{align*}
&{\tilde{\Gamma}_n}-\left[\frac{1}{2}\Delta_{n}\sum_{i=1}^{n-1}\mathrm{e}^{-u(\sigma_{i-1}^{X})^2-v(\sigma_{i-1}^{X})^{2}-u'(\sigma_{i-1}^{Y})^2-v'(\sigma_{i-1}^{Y})^2}\left[\mathrm{e}^{-2\sqrt{uu'}(\sigma_{i-1}^{X})^2+2\sqrt{uu'}(\sigma_{i-1}^{X})^2-2\sqrt{vv'}(\sigma_{i-1}^{Y})^2+2\sqrt{vv'}(\sigma_{i-1}^{Y})^2}-2\right]\right.\nonumber\\
		&+\frac{1}{2}\Delta_{n}\sum_{i=1}^{n-1}\mathrm{e}^{-u(\sigma_{i-1}^{X})^2-v(\sigma_{i-1}^{X})^{2}-u'(\sigma_{i-1}^{Y})^2-v'(\sigma_{i-1}^{Y})^2}\left[\mathrm{e}^{-2\sqrt{vu'}\rho\sigma_{i-1}^{X}\sigma_{i-1}^{Y}+2\sqrt{vu'}\rho\sigma_{i-1}^{X}\sigma_{i-1}^{Y}}-2\right]\nonumber\\
		&\left.+\frac{1}{2}\Delta_{n}\sum_{i=1}^{n-1}\mathrm{e}^{-u(\sigma_{i-1}^{X})^2-v(\sigma_{i-1}^{X})^{2}-u'(\sigma_{i-1}^{Y})^2-v'(\sigma_{i-1}^{Y})^2}\left[\mathrm{e}^{-2\sqrt{v'u}\rho\sigma_{i-1}^{X}\sigma_{i-1}^{Y}+2\sqrt{v'u}\rho\sigma_{i-1}^{X}\sigma_{i-1}^{Y}}-2\right]\right]\stackrel{P}{\longrightarrow}0,\nonumber\\
\end{align*}
and
\begin{align*}
&\frac{1}{2}\Delta_{n}\sum_{i=1}^{n-1}\mathrm{e}^{-u(\sigma_{i-1}^{X})^2-v(\sigma_{i-1}^{X})^{2}-u'(\sigma_{i-1}^{Y})^2-v'(\sigma_{i-1}^{Y})^2}\left[\mathrm{e}^{-2\sqrt{uu'}(\sigma_{i-1}^{X})^2+2\sqrt{uu'}(\sigma_{i-1}^{X})^2-2\sqrt{vv'}(\sigma_{i-1}^{Y})^2+2\sqrt{vv'}(\sigma_{i-1}^{Y})^2}-2\right]\nonumber\\
		&+\frac{1}{2}\Delta_{n}\sum_{i=1}^{n-1}\mathrm{e}^{-u(\sigma_{i-1}^{X})^2-v(\sigma_{i-1}^{X})^{2}-u'(\sigma_{i-1}^{Y})^2-v'(\sigma_{i-1}^{Y})^2}\left[\mathrm{e}^{-2\sqrt{vu'}\rho\sigma_{i-1}^{X}\sigma_{i-1}^{Y}+2\sqrt{vu'}\rho\sigma_{i-1}^{X}\sigma_{i-1}^{Y}}-2\right]\nonumber\\
		&+\frac{1}{2}\Delta_{n}\sum_{i=1}^{n-1}\mathrm{e}^{-u(\sigma_{i-1}^{X})^2-v(\sigma_{i-1}^{X})^{2}-u'(\sigma_{i-1}^{Y})^2-v'(\sigma_{i-1}^{Y})^2}\left[\mathrm{e}^{-2\sqrt{v'u}\rho\sigma_{i-1}^{X}\sigma_{i-1}^{Y}+2\sqrt{v'u}\rho\sigma_{i-1}^{X}\sigma_{i-1}^{Y}}-2\right]\nonumber\\
\stackrel{P}{\longrightarrow}&\frac{1}{2}\int_{0}^{T}\mathrm{e}^{-u(\sigma_{s}^{X})^2-v(\sigma_{s}^{X})^{2}-u'(\sigma_{s}^{Y})^2-v'(\sigma_{s}^{Y})^2}\left[\mathrm{e}^{-2\sqrt{uu'}(\sigma_{s}^{X})^2+2\sqrt{uu'}(\sigma_{s}^{X})^2-2\sqrt{vv'}(\sigma_{s}^{Y})^2+2\sqrt{vv'}(\sigma_{2}^{Y})^2}\right.\nonumber\\
		&\left.+\mathrm{e}^{-2\sqrt{vu'}\rho\sigma_{s}^{X}\sigma_{s}^{Y}+2\sqrt{vu'}\rho\sigma_{s}^{X}\sigma_{s}^{Y}}+\mathrm{e}^{-2\sqrt{v'u}\rho\sigma_{s}^{X}\sigma_{s}^{Y}+2\sqrt{v'u}\rho\sigma_{s}^{X}\sigma_{s}^{Y}}-6\right]\mathrm{d}s.\nonumber
\end{align*}
We can achieve
\begin{align*}
{\tilde{\Gamma}_n}\stackrel{P}{\longrightarrow}&\frac{1}{2}\int_{0}^{T}\mathrm{e}^{-u(\sigma_{s}^{X})^2-v(\sigma_{s}^{X})^{2}-u'(\sigma_{s}^{Y})^2-v'(\sigma_{s}^{Y})^2}\left[\mathrm{e}^{-2\sqrt{uu'}(\sigma_{s}^{X})^2+2\sqrt{uu'}(\sigma_{s}^{X})^2-2\sqrt{vv'}(\sigma_{s}^{Y})^2+2\sqrt{vv'}(\sigma_{2}^{Y})^2}\right.\nonumber\\
		&\left.+\mathrm{e}^{-2\sqrt{vu'}\rho\sigma_{s}^{X}\sigma_{s}^{Y}+2\sqrt{vu'}\rho\sigma_{s}^{X}\sigma_{s}^{Y}}+\mathrm{e}^{-2\sqrt{v'u}\rho\sigma_{s}^{X}\sigma_{s}^{Y}+2\sqrt{v'u}\rho\sigma_{s}^{X}\sigma_{s}^{Y}}-6\right]\mathrm{d}s.\nonumber
\end{align*}
\end{pot2}

%\textcolor{blue}{
\begin{pot3}
\begin{enumerate}
\item For
$$\sqrt{T}
\begin{pmatrix}
\frac{1}{T}\sum\limits_{t=1}^{T}\hat{Z}_{t}^{x,y}(u,v)-\frac{1}{T}\sum\limits_{t=1}^{T}Z_{t}^{x,y}(u,v)+\frac{1}{T}\sum\limits_{t=1}^{T}Z_{t}^{x,y}(u,v)-\mu_{t}^{x,y}(u,v)\\
\frac{1}{T}\sum\limits_{t=1}^{T}\hat{Z}_{t}^{x}(u)-\frac{1}{T}\sum\limits_{t=1}^{T}Z_{t}^{x}(u)+\frac{1}{T}\sum\limits_{t=1}^{T}Z_{t}^{x}(u)-\mu_{t}^{x}(u)\\
\frac{1}{T}\sum\limits_{t=1}^{T}\hat{Z}_{t}^{y}(v)-\frac{1}{T}\sum\limits_{t=1}^{T}Z_{t}^{y}(v)+\frac{1}{T}\sum\limits_{t=1}^{T}Z_{t}^{y}(v)-\mu_{t}^{y}(v)\\
\end{pmatrix}
$$
Combing with Assumption \ref{asu3}, Theorem \ref{thm2} and \cite{todorov2012realized}, we have
\begin{eqnarray*}
% \nonumber % Remove numbering (before each equation)
\sqrt{T}\left(\frac{1}{T}\sum\limits_{t=1}^{T}\hat{Z}_{t}^{x,y}(u,v)-\frac{1}{T}\sum\limits_{t=1}^{T}Z_{t}^{x,y}(u,v)\right)&=&O_{p}(\sqrt{T\Delta_{n}}),\\
\frac{1}{T}\sum\limits_{t=1}^{T}\hat{Z}_{t}^{x}(u)-\frac{1}{T}\sum\limits_{t=1}^{T}Z_{t}^{x}(u)&=&O_{p}(\sqrt{T\Delta_{n}})\\
\frac{1}{T}\sum\limits_{t=1}^{T}\hat{Z}_{t}^{y}(v)-\frac{1}{T}\sum\limits_{t=1}^{T}Z_{t}^{y}(v)&=&O_{p}(\sqrt{T\Delta_{n}})
\end{eqnarray*}
Then according to the Assumption \ref{asu3} and CLT for stationary processes by \cite{jacod2003limit}, the finite-dimensional convergence is as follows
$$\sqrt{T}
\begin{pmatrix}
\frac{1}{T}\sum\limits_{t=1}^{T}Z_{t}^{x,y}(u,v)-\mu_{t}^{x,y}(u,v)\\
\frac{1}{T}\sum\limits_{t=1}^{T}Z_{t}^{x}(u)-\mu_{t}^{x}(u)\\
\frac{1}{T}\sum\limits_{t=1}^{T}Z_{t}^{y}(v)-\mu_{t}^{y}(v)\\
\end{pmatrix}
\stackrel{\cal S}{\longrightarrow} \Phi(u,v),$$
where the $\Phi(u,v)$ are shown in Theorem \ref{thm3}.\\
Next, we show the tightness.
For $u_{1}, u_{2}, v_{1}, v_{2}\in\mathbb{R}_{+}$,
\begin{eqnarray*}
&&T\mathbb{E}\left[\frac{1}{T}\sum_{t=1}^{T}Z_{t}^{x,y}(u_{1},v_{1})-\mu_{t}^{x,y}(u_{1},v_{1})-\frac{1}{T}\sum_{t=1}^{T}Z_{t}^{x,y}(u_{2},v_{2})+\mu_{t}^{x,y}(u_{2},v_{2})\right]^{2}\\
&=&\frac{1}{T}\mathbb{E}\left[\sum_{t=1}^{T}\left[\left(\int_{t-1}^{t}e^{-u_{1}(\sigma_{s}^{x})^{2}-v_{1}(\sigma_{s}^{y})^{2}}-e^{-u_{2}(\sigma_{s}^{x})^{2}-v_{2}(\sigma_{s}^{y})^{2}}ds\right)\right.\right.\\
&&\left.\left.-\mathbb{E}\left(\int_{t-1}^{t}e^{-u_{1}(\sigma_{s}^{x})^{2}-v_{1}(\sigma_{s}^{y})^{2}}-e^{-u_{2}(\sigma_{s}^{x})^{2}-v_{2}(\sigma_{s}^{y})^{2}}ds\right)\right]\right]^{2}\\
&\leq&C[(u_{1}-u_{2})^{2}-(v_{1}-v_{2})^2]
\end{eqnarray*}
The last inequality follows from Assumption \ref{asu3} and Taylor formula,
\begin{eqnarray*}
&&e^{-u_{1}(\sigma_{s}^{x})^{2}-v_{1}(\sigma_{s}^{y})^{2}}-e^{-u_{2}(\sigma_{s}^{x})^{2}-v_{2}(\sigma_{s}^{y})^{2}}\\
&=&\left(-(\sigma_{s^{\star}}^{x})^{2}e^{-u_{2}(\sigma_{s^{\star}}^{x})^{2}-v_{2}(\sigma_{s^{\star}}^{y})^{2}}\right)(u_{1}-u_{2})+\left(-(\sigma_{s^{\star}}^{y})^{2}e^{-u_{2}(\sigma_{s^{\star}}^{x})^{2}-v_{2}(\sigma_{s^{\star}}^{y})^{2}}\right)(v_{1}-v_{2})
\end{eqnarray*}
where $s^{\star}\in(t-1,t)$. The desired results can be obtained from the Theorem 20 of \cite{ibragimov1981statistical}.
\item we denote
\begin{equation*}
C_{11}^{l}([u,v],[u',v'])=\frac{1}{T}\sum_{t=l+1}^{T}[Z_{t}^{x,y}(u,v)-\mu_{t}^{x,y}(u,v)][Z_{t-l}^{x,y}(u',v')-\mu_{t}^{x,y}(u',v')].
\end{equation*}
Based on the Proposition 1 in \cite{andrews1991heteroskedasticity},
\begin{equation*}
  C_{11}^{0}([u,v],[u',v'])+\sum_{i=1}^{L_{T}}\omega(i,L_{T})(C_{11}^{i}([u,v],[u',v'])+C_{11}^{i}([u',v'],[u,v]))\stackrel{P}{\longrightarrow} V_{11}([u,v],[u',v'])\\
\end{equation*}
And also $Z_{t}^{x,y}(u,v)$, $\hat{Z}_{t}^{x,y}(u,v)$ are bounded,
\begin{eqnarray}\label{equation34}
&&|\hat{C}_{11}^{i}-C_{11}^{i}|\nonumber\\
&\leq&C\frac{1}{T}\sum_{t=1}^{T}\left\{\left|\hat{Z}_{t}^{x,y}(u,v)-Z_{t}^{x,y}(u,v)\right|+\left|\frac{1}{T}\sum_{t=1}^{T}\hat{Z}_{t}^{x,y}(u,v)-\mu_{t}^{x,y}(u,v)\right|\right\}\nonumber\\
&\leq&C\frac{1}{T}\sum_{t=1}^{T}\left\{\left|\hat{Z}_{t}^{x,y}(u,v)-Z_{t}^{x,y}(u,v)\right|+\left|\frac{1}{T}\sum_{t=1}^{T}\hat{Z}_{t}^{x,y}(u,v)-\frac{1}{T}\sum_{t=1}^{T}Z_{t}^{x,y}(u,v)\right|\right.\nonumber\\
&&\left.+\left|\frac{1}{T}\sum_{t=1}^{T}Z_{t}^{x,y}(u,v)-\mu_{t}^{x,y}(u,v)\right|\right\}
\end{eqnarray}
The first and second terms of (\ref{equation34}) are $O_{p}(\sqrt{\Delta_{n}})$ according to  Theorem \ref{thm2}, the magnitude of the last term is $O_{p}(\frac{1}{\sqrt{T}})$ due to Theorem \ref{thm3}. Therefore, the relationship between $T$,$\Delta_{n}$ and $L_{T}$ can be obtained from
$\sqrt{\Delta_{n}L_{T}}\to 0,  L_{T}\frac{1}{\sqrt{T}}\to 0$. Then, we use the same way to achieve the consistency of $\hat{V}_{ab}^{i}$, $a=1,2,3$, $b=1,2,3$.
\end{enumerate}
\end{pot3}
%}

%\newpage

%\bibliographystyle{model2-names}
%{\footnotesize
%	\bibliography{Reference}
%}
%\begin{thebibliography}{99}
%\end{thebibliography}

\end{document}